\documentclass[bj]{imsart}

\RequirePackage{amsthm,amsmath,amssymb}
\RequirePackage[numbers]{natbib}

\startlocaldefs
\numberwithin{equation}{section}
\theoremstyle{plain}
\newtheorem{thm}{Theorem}[section]

\newtheorem{lemma}{Lemma}[section]
\newtheorem{remark}{Remark}[section]

\endlocaldefs

\begin{document}
\global\long\def\xib{\boldsymbol{\xi}}
\global\long\def\etab{\boldsymbol{\eta}}
\global\long\def\thetab{\boldsymbol{\theta}}
\global\long\def\yb{\boldsymbol{y}}
\global\long\def\eb{\boldsymbol{e}}
\begin{frontmatter}
\title{On consistency of nonparametric tests}
\runtitle{ On consistency of nonparametric tests}

\begin{aug}
\author{\fnms{Mikhail} \snm{Ermakov}\ead[label=e1]{erm2512@gmail.com}}

\runauthor{M. Ermakov}

\address{Institute of Problems of Mechanical Engineering RAS, Bolshoy pr., 61, VO, 1991178 RUSSIA and
St. Petersburg State University, Universitetsky pr., 28, Petrodvoretz, 198504 St. Petersburg, RUSSIA\\
\printead{e1}}

\end{aug}

\begin{abstract} For   $\chi^2-$tests with increasing number of cells, Cramer-von Mises tests, tests generated  $\mathbb{L}_2$- norms of kernel estimators and  tests generated quadratic forms of estimators of Fourier coefficients, we find  necessary and sufficient conditions of consistency  and inconsistency for sequences  of alternatives having a given rate of convergence to hypothesis in $\mathbb{L}_2$-norm. We provide transparent interpretations of these conditions allowing to understand the structure of such consistent sequences. For problem of signal detection in Gaussian white noise we show that, if set of alternatives is bounded closed center-symmetric convex set $U$ with deleted "small" $\mathbb{L}_2$ -- ball, then compactness of set $U$ is necessary condition for existence of consistent tests.
  \end{abstract}

  \begin{keyword}[class=AMS]
\kwd[Primary]\,{62F03} \kwd{62G10}  \kwd{62G20}

\end{keyword}

\begin{keyword}
\kwd{Cramer-von Mises tests}
\kwd{ chi-squared test}
\kwd{consistency}
\kwd{nonparametric hypothesis testing}
\kwd{signal detection}
\end{keyword}

\end{frontmatter}

\section{Introduction}

 For exploration of problem of nonparametric hypothesis testing on a density a priori information is introduced usually that density belongs to some set $U$ of smooth or convex, or ... functions (see \cite{dal}, \cite{er90}, \cite{ing02},  \cite{ing12},
  \cite{la},    \cite{lep}   and references therein). The same situation takes place for problem of signal detection in Gaussian noise. In paper we explore the problem of choice of largest sets $U$ for these setups  and propose new setup allowing  to explore the problem without introduction of such a priori information.

We answer on the following questions.

 {\it For which largest sets   $U$ are there   uniformly consistent tests?}

 This problem is explored for signal detection in Gaussian white noise. We show that, if set of alternatives is closed bounded center-symmetric convex set $U$ with deleted "small" $\mathbb{L}_2$-ball, then  uniformly consistent tests exist, iff, the set $U$ is compact. This statement shows that such a setup requires significant a priori information on sets of alternatives. Note that, for existence of uniformly consistent nonparametric estimators, the compactness is also necessary and sufficient condition (see  \cite{ih}  and  \cite{jo}). Similar statement holds also in theory of ill-posed inverse problems with deterministic noise \cite{en}. Problem of consistency of tests has been explored in many papers and for different setups. Rather complete bibliography one can find in \cite{er15}.

 {\it
Let test statistics be given and let rate of convergence to zero for radius of deleted "small" balls be known. What is  largest sets $U$ in this setup?}

 Such sets we call  maxisets. For   $\chi^2$-tests with increasing number of cells, Cramer -von Mises tests, tests generated  $\mathbb{L}_2$- norms of kernel estimators and  tests generated quadratic forms of estimators of Fourier coefficients (Theorem \ref{tq1}) we show  that maxisets are Besov bodies $B^s_{2\infty}(P_0)$, $P_0  > 0$.

  All  above mentioned test statistics are quadratic functionals. This allows to develop unified approach to exploration and to prove similar results for all these setups.

 For  nonparametric estimation the notion of maxisets has been introduced Kerkyacharian and Picard \cite{ker93}. Maxisets of  nonparametric  estimators have been comprehensively explored in \cite{co},   \cite{ker02},  \cite{rio} (see also references therein). For nonparametric hypothesis testing completely different definition of maxisets has been introduced  Autin, Clausel,  Freyermuth and  Marteau \cite{au}.

  Chi-squared tests and Cramer-von Mises tests are  explored for the problem of hypothesis testing on a density of distribution.

Let $X_1,\ldots,X_n$ be i.i.d.r.v.'s with c.d.f. $F(x)$, $x \in (0,1)$. Let c.d.f. $F(x)$ have a density $p(x) = 1 + f(x) = dF(x)/dx$. Suppose  $f \in \mathbb{L}_2(0,1)$ with the norm
$
\|f\| = \left(\int_0^1 f^2(x) dx \right)^{1/2} < \infty.
$

One needs to verify hypothesis
\begin{equation}\label{i1}
\mathbb{H}_0\,:\, f(x) = 0, \quad x \in (0,1),
\end{equation}
versus $f$ belongs to some nonparametric set of alternatives.

Tests generated  $\mathbb{L}_2$- norms of kernel estimators and
 tests generated quadratic forms of estimators of Fourier coefficients are explored for problem of signal detection in Gaussian white noise.
  We observe a realization of random process $Y_n(t)$ defined stochastic differential equation
\begin{equation}\label{q1}
dY_n(t) = f(t) dt + \frac{\sigma}{\sqrt{n}}\, dw(t), \quad t \in [0,1],\quad \sigma >0,
\end{equation}
where  $f \in \mathbb{L}_2(0,1)$  is  unknown signal and $dw(t)$ is Gaussian white noise.

 The answers on two previous questions are provided for the following setup. We have a priori information that function $f$ belongs to a ball $U$  in some functional space $\Im$. We wish to test  hypothesis (\ref{i1})  versus alternatives
\begin{equation}\label{i26}
\mathbb{H}_n\, : \, f \in V_n = \{ f \,: \, \|f\|^2 \ge \rho_n,\, f \in U\, \}
\end{equation}
with $\rho_n \to 0$ as $n \to \infty$.

We show that there is sequence $\rho_n \to 0$ as $n \to \infty$ such that consistent tests exist, iff, ball $U$ is compact in $\mathbb{L}_2$  (see Theorems \ref{tqq} and \ref{tqq1}).

The answer on the second question is provided for $\rho_n \asymp n^{-2r}$,  $0< r< 1/2$. For such a choice $r$ we have $ r = \frac{2s}{1 + 4s}$ for
 $\chi^2-$tests with increasing number of cells, tests generated  $\mathbb{L}_2$- norms of kernel estimators, tests generated quadratic forms of estimators of Fourier coefficients and  $r = \frac{s}{2+2s}$ for Cramer-von Mises tests.
   Uniform consistency of chi-squared tests  and Cramer - von Mises tests on sets  $V_n$ if set $U$ is above mentioned Besov balls has been established Ingster \cite{ing87}.

   Sequence of alternatives may be consistent, has given rate of convergence to hypothesis and does not belong to maxisets. Thus   sets of alternatives $V_n$  cover only a part of all consistent alternatives.

   {\it How to describe, for given test statistics, all consistent and inconsistent sequences of alternatives with fixed rates of convergence to hypothesis in $\mathbb{L}_2$-norm?}

       We explore the problem of hypothesis testing (\ref{i1}) versus  alternatives
     \begin{equation}\label{i30}
\mathbb{H}_n\,:\, f = f_n, \qquad cn^{-r} \le \| f_n\| \le C n^{-r}, \qquad  0 < r < 1/2.
\end{equation}
          For above mentioned test statistics  answer on this question is provided in terms of concentration of Fourier coefficients (Theorems \ref{tq3} and \ref{tq4}). We propose the following interpretation (Theorem \ref{tq7}) of  these results:

  {\it functions $f_n$ of consistent sequence of alternatives having given rate of convergence to hypothesis admit representation as
functions $f_{1n}$ from maxiset with the same rate of convergence to hypothesis plus orthogonal functions $f_{n} - f_{1n}$.}

If we suppose that the smoothest part of the alternatives belongs to maxiset then we can provide the following interpretation of this statement.

\
  {\it Any consistent sequence of alternatives having given rate of convergence to hypothesis   admits representation as smooth
functions from maxiset with the same rate of convergence to hypothesis plus orthogonal more oscillating functions.}

We show   (Theorem   \ref{tq11}) that, for any $\varepsilon > 0$, there are maxiset and functions $f_{1n}$  from maxiset such that the differences of type II error probabilities for alternatives $f_{n}$ and $f_{1n}$ is smaller   $\varepsilon $.

    Thus, each  function of consistent sequence of alternatives  with fixed rate of convergence to hypothesis  contains sufficiently smooth function  as an additive component and this function carries almost all information on its type II error probability.

  {\it What can we say about properties of consistent and inconsistent sequences of alternatives having fixed rate of convergence to hypothesis in $\mathbb{L}_2$- norm?}

  We show (Theorem \ref{tq5}) that asymptotic of type II error probabilities of sums of alternatives from consistent and inconsistent sequences coincides with the asymptotic for consistent sequence.

We call sequence of alternatives $f_n$ purely consistent if we could not distinguish from this sequence inconsistent sequence of alternatives $f_{2n}$ having the same rates of convergence to hypothesis and such that $f_{2n}$ are orthogonal to $f_n  - f_{2n}$. In terms of concentration of Fourier coefficients we point out (see Theorem \ref{tq6}) analytic assignment of purely consistent sequences of alternatives.
    It is easy to  show that any sequence of alternatives from maxisets with fixed rates of convergence to hypothesis is purely consistent.

  We show (Theorem \ref{tq12}) that, for any $\varepsilon>0$, for any purely consistent sequence of alternatives $f_n $, $cn^{-r} \le \| f_n\| \le C n^{-r}$,  there are maxiset and some sequence  $f_{1n}$ from this maxiset, such that there holds $\|f_n - f_{1n}\| \le \varepsilon n^{-r}$.
\vskip 0.15cm
 Paper is organized as follows. In section \ref{sec2} main definitions are introduced. In section \ref{sec3},  the answer on the first question is provided. In sections  \ref{sec4}, \ref{sec5}, \ref{sec6} and \ref{sec7}  above mentioned  results are established respectively for  test statistics based on quadratic forms of estimators of Fourier coefficients,  $\mathbb{L}_2$ -- norms of kernel estimators, $\chi^2$--tests and Cramer-- von Mises tests. Proof of all Theorems is provided in Appendix.

  Exploration of consistency for test statistics based on quadratic forms of estimators of Fourier coefficients,  $\mathbb{L}_2$--norms of kernel estimators and  $\chi^2$--tests with increasing number of cells is based on Theorems (see Theorems  \ref{tq2}, \ref{tk2} and \ref{chi2}) on asymptotic minimaxity of these test statistics in semiparametric setup. In semiparametric setup (distance method) sets of alternatives are defined distance generating test statistics. Set of alternatives is set of all alternatives such that their distance from hypothesis is more then given constant. These results (see Theorems  \ref{tq2}, \ref{tk2} and \ref{chi2}) reduce the exploration of consistency of alternatives  to the exploration of rates of convergence of distances of alternatives from hypothesis.  For Cramer-von Mises test statistics  a similar statement  has not been established. Thus, in section 7, we prove uniform consistency of Cramer -von Mises test statistics on sets of alternatives such that normalized Cramer -von Mises distances of these alternatives from hypothesis are more than some positive constant (see Theorem \ref{tcm}).

We use letters $c$ and $C$ as a generic notation for positive constants. Denote ${\bf 1}_{\{A\}}$ the
indicator of an event $A$.  Denote $[a]$ whole part of real number $a$. For any two sequences of positive real numbers $a_n$ and $b_n$,  $a_n \asymp b_n$ implies $c < a_n/b_n < C$ for all $n$ and $a_n = o(b_n)$ implies $a_n/b_n \to 0$ as $n \to \infty$. For any complex number $z$ denote $\bar z$ complex conjugate number.

Denote
$$ \Phi(x) = \frac{1}{\sqrt{2\pi}}\,\int_{-\infty}^x\,\exp\{-t^2/2\}\, dt, \quad x \in \mathbb{R}^1,
$$
the standard normal distribution function.

Let $\phi_j$, $1 \le j < \infty$, be orthonormal system of functions onto $\mathbb{L}_2(0,1)$. Define the sets
\begin{equation}\label{vv}
\mathbb{\bar B}^s_{2\infty}(P_0) = \Bigl\{f : f = \sum_{j=1}^\infty\theta_j\phi_j,\,\,\,  \sup_{\lambda>0} \lambda^{2s} \sum_{j>\lambda} \theta_j^2 \le P_0,\,\, \theta_j \in \mathbb{R}^1 \Bigr\}.
\end{equation}
Under some conditions on the basis $\phi_j, 1 \le j < \infty,$  the space
$$
\bar{\mathbb{ B}}^s_{2\infty} = \Bigl\{ f : f = \sum_{j=1}^\infty\theta_j\phi_j,\,\,\,  \sup_{\lambda>0} \lambda^{2s} \sum_{j>\lambda}\, \theta_j^2 < \infty,\,\, \theta_j \in \mathbb{R}^1 \Bigr\}
$$
is Besov space $\mathbb{B}^s_{2\infty}$ (see   \cite{rio}).
In particular, $\mathbb{\bar B}^s_{2\infty}$ is Besov space  if $\phi_j$, $1 \le j < \infty$, is trigonometric basis.

If $\phi_j(t) = \exp\{2\pi i j x\}$, $x\in (0,1)$, $j = 0, \pm 1, \ldots$,  denote
$$
\mathbb{ B}^s_{2\infty}(P_0) = \Bigl\{f : f = \sum_{j=-\infty}^\infty \theta_j\phi_j,\,\,\,  \sup_{\lambda>0} \lambda^{2s} \sum_{|j| >\lambda} |\theta_j|^2 \le P_0 \Bigr\}.
$$
Here $\theta_j$ are complex numbers  and $\theta_j =  \bar\theta_{-j}$ for all $-\infty < j < \infty$.

For the same basis denote
$$
\mathbb{\tilde B}^s_{2\infty}(P_0) = \Bigl\{f : f = \sum_{j=-\infty}^\infty \theta_j\phi_j,\,\,f \in \mathbb{ B}^s_{2\infty}(P_0),\,  \theta_0 =0\, \Bigr\}.
$$
The balls in Nikols'ki classes
$$
\int\,(f^{(l)}(x+t) - f^{(l)}(x))^2\, dx \le L |t|^{2(s-l)}, \quad \|f\| < C
$$
with $l = [s]$ are the  balls in $\mathbb{B}^s_{2\infty}$.
\section{Main definitions \label{sec2}}
\subsection{ Consistency and $n^{-r}$-consistency \label{ss2.1}}
For any test $K_n$  denote $\alpha(K_n)$ its type I error probability, and $\beta(K_n,f)$ its type II error probability for  alternative $f \in \mathbb{L}_2(0,1)$.

Definition of consistency  will be different in each  section. In section \ref{sec3} we explore the problem of existence of consistent tests and consistency is considered among all tests.

In section \ref{sec4} consistency is considered for a given sequence of test statistics $T_n$. For kernel-based tests and chi-squared tests, consistency is explored for whole population of test statistics depending on kernel width and number of cells respectively. In section \ref{sec7} we have only one test statistic.

Below we provide the definition of consistency for setup of \ref{sec4}.  Thus
all definition of further subsections can be considered only for this setup. However these definition are valid for setups of sections \ref{sec5} - \ref{sec7}.

We say that sequence of alternatives $f_n$ is {\sl consistent}  if for any $\alpha$, $0 < \alpha < 1$, for sequence of tests $K_n$, $\alpha(K_n) = \alpha\,(1 + o(1))$, generated test statistics $T_n$, there holds
\begin{equation}\label{vas1}
\limsup_{n\to\infty}  \beta(K_n, f_n) < 1 - \alpha.
\end{equation}
If $cn^{-r} < \|f_n\| < Cn^{-r}$ additionally, we say that sequence of alternatives $f_n$ is $n^{-r}$- {\sl consistent} (see  \cite{ts}).

We say that sequence of alternatives $f_n$ is {\sl inconsistent}  if, for each sequence of tests $K_n$ generated test statistics $T_n$, there holds
\begin{equation}\label{vas25}
\liminf_{n\to\infty} (\alpha(K_n) + \beta(K_n, f_n)) \ge 1.
\end{equation}

Suppose we consider problem of testing hypothesis  $\mathbb{H}_0 : f =0$ versus alternative
\begin{equation}\label{uc1}
\mathbb{H}_n : f \in \Psi_n,
\end{equation}
where $\Psi_n$ are bounded subsets of $\mathbb{L}_2(0,1)$.

For tests $K_n$, $\alpha(K_n) = \alpha + o(1)$, $0 < \alpha <1$, generated test statistics $T_n$ denote
$\beta(K_n,\Psi_n) = \sup_{f \in \Psi_n} \beta(K_n,f)$.

We say that sets $\Psi_n$ of alternatives are uniformly consistent if
\begin{equation}\label{uc1}
\limsup_{n \to \infty} \beta(K_n,\Psi_n) < 1 - \alpha.
\end{equation}
 Set $\Psi_n$ is bounded subset of $\mathbb{L}_2(0,1)$. Therefore $\Psi_n$ is compact in weak topology in  $\mathbb{L}_2(0,1)$. Hence it is easy to show that sequence of sets $\Psi_n$ is uniformly consistent, if and only if, sets $\Psi_n$ do not contain inconsistent sequence of alternatives $f_n \in \Psi_n$.  In other words, sequence of sets $\Psi_n$ is uniformly consistent, if and only if, all sequences of alternatives $f_n \in \Theta_n$ are consistent. Thus the problem on consistency on sets of alternatives is reduced to the problem of consistency on sequences of alternatives.
\subsection{Purely consistent sequences}
We say that $n^{-r}$- consistent sequence of alternatives  $f_n$ is {\sl purely $n^{-r}$-consistent} if there does not exist subsequence $f_{n_i}$ such that $f_{n_i} = f_{1n_i} + f_{2n_i}$ where  $f_{2n_i}$ is orthogonal to  $f_{1n_i}$ and sequence $f_{2n_i}$, $\|f_{2n_i}\| > c_1n^{-r}$, is inconsistent.
\subsection{Maxisets \label{ss2.3}}
Let $\phi_j$, $1 \le j < \infty$, be orthonormal basis in $\mathbb{L}_2(0,1)$. We say that a set $U$, $U \subset \mathbb{L}_2(0,1)$, is ortho-symmetric with respect to this basis if $f = \sum_{j=1}^\infty \theta_j \phi_j \in U$ implies $\tilde f = \sum_{j=1}^\infty \tilde\theta_j \phi_j \in U$ for any $\tilde\theta_j = \theta_j$ or $\tilde\theta_j = -\theta_j$, $j=1,2,\ldots$.

For closed convex bounded set $U \subset \mathbb{L}_2(0,1)$ denote $\Im_U$ functional space with unite ball $U$.

For the problem of signal detection we call bounded closed  set $\gamma U \subset \mathbb{L}_2(0,1)$,  {\sl maxiset} if
\vskip 0.15cm
{\sl i.} set $U$ is  convex,
\vskip 0.15cm
{\sl ii.}    the set $U$ is ortho-symmetric with respect to  orthonormal basis $\phi_j$, $1 \le j < \infty$,
\vskip 0.15cm
{\sl iii.} any subsequence of alternatives $f_{n_i} \in \gamma\,U$, $cn_i^{-r} < \|f_{n_i}\| < Cn_i^{-r}$, $n_i \to \infty$ as $i \to \infty$, is consistent,
\vskip 0.25cm
{\sl iv.} if $f \notin \Im_U$, then, in any convex, ortho-symmetric set $V$ that contains  $f$, there is inconsistent subsequence  of alternatives $f_{n_i} \in V$, $cn_i^{-r} < \|f_{n_i}\| < Cn_i^{-r}$, where $n_i \to \infty$ as $i \to \infty$.
\vskip 0.15cm
{\sl iv.} implies that $U$ is the largest set satisfying {\sl i.- iii.}

For problem of hypothesis testing on a density, in definition of maxiset we make additional assumption:
\vskip 0.15cm
{\sl iv.} is considered only for   functions $f = 1 + \sum_{i=1}^\infty \theta_i \phi_i$ (or $f = 1 + \sum_{|i| \ge 1}^\infty \theta_i \phi_i$) satisfying the following condition.
 \vskip 0.15cm
 {\bf D.} There is $l_0 = l_0(f)$ such that, for all $l > l_0$,  functions $1 + \sum_{|i| >l}^\infty \theta_i \phi_j$   are nonnegative (are densities).

 D allows to analyze tails $f_{n_j} = \sum_{|i| \ge j} \theta_i \phi_i$ to establish {\sl iv.}

 If $U$ is maxiset, then $\gamma U$, $0 < \gamma < \infty$, is maxiset as well.

 Simultaneous assumptions of convexity and ortho-symmetry of set $V$ is rather strong. If $f \in V$, $f = \sum_{i=1}^\infty \theta_i \phi_i$, then any $f_\eta \in V$ with  $f_\eta = \sum_{i=1}^\infty \eta_i \phi_i$,  $|\eta_i| < |\theta_i|$, $1 \le i < \infty$.

Test statistics of tests generated  $\mathbb{L}_2$- norms of kernel estimators and   Cramer-von Mises tests  admit representation as a linear combination of squares of estimators of Fourier  coefficients. Therefore, for  these test statistics, consistency of sequence $f_n $ implies consistency of any sequence of ortho-symmetric functions $\tilde f_n$ generated $f_n$. Moreover, type II error probabilities of sequences $f_n$ and $\tilde f_{n}$ have the same asymptotic. Thus  the requirement {\sl ii.} seems natural for test statistics admitting representation as a liner combination of squares of estimators of Fourier  coefficients.
For chi-squared tests, by Theorem \ref{tchi3} given in what follows, the same statement holds.
\subsection{Another approach to definition of maxisets \label{ss2.5}}
Requirement of ortho-symmetry of set $U$ does not allow to call maxiset any convex set $W$ generated equivalent norm in $\Im_U$. In definition given below we do not make such an assumption.

In this definition of maxiset we do not suppose ortho-symmetry of set $U$.

Let $\Im \subset \mathbb{L}_2(0,1)$ be Banach space with a norm $\|\cdot\|_\Im$. Denote $\gamma U=\{f:\, \|f\|_\Im \le \gamma,\, f \in \Im\}$, $\gamma > 0,$  a ball in $\Im$.

Define subspaces $\Pi_k$, $1 \le k < \infty$, by induction.

Denote $d_1= \max\{\|f\|,\, f \in U\}$ and denote $e_1$ function $e_1 \in U$ such that $\|e_1\|= d_1.$ Denote $\Pi_1$ linear space generated vector $e_1$.

For $i=2,3,\ldots$ denote
$d_i = \max\{\rho(f,\Pi_{i-1}), f \in U \}$ with $\rho(f,\Pi_{i-1})=\min\{\|f-g\|, g \in \Pi_{i-1} \}$. Define function $e_i$, $e_i \in U$, such that $\rho(e_i,\Pi_{i-1}) = d_i$.
Denote $\Pi_i$ linear space generated functions $e_1,\ldots,e_i$.

For any $f \in \mathbb{L}_2(0,1)$ denote
$f_{\Pi_i}$ the projection of  $f$ onto the subspace $\Pi_i$ and denote $\tilde f_i = f - f_{\Pi_i}$.

Thus we associate with each $f \in \mathbb{L}_2(0,1)$ sequence of functions $\tilde f_i, \tilde f_i \to 0$ as $i \to \infty$. This allows to cover by  our consideration  all space $\mathbb{L}_2(0,1)$. Suppose that the functions $e_1,e_2,\ldots$ are sufficiently smooth. Then, considering the functions $\tilde  f_i = f - f_{\Pi_i}$, we "in some sense delete the most smooth part $f_{\Pi_i}$ of function $f$ and explore the behaviour of remaining part."

For the problem of signal detection we say that  set $U$ is maxiset for test statistics $T_n$ and $\Im$ is maxispace if the following two statements take place.
\vskip 0.3cm
{\sl i.} any subsequence of alternatives $f_{n_j} \in U$, $cn_j^{-r} < \|f_{n_j}\| < Cn_j^{-r}$, $n_j \to \infty$ as $j \to \infty$, is consistent,.
\vskip 0.3cm
{\sl ii.}  for any  $f \in \mathbb{L}_2(0,1)$, $f \notin \Im$,  there are sequences  $i_n, j_{i_n}$ with $i_n \to \infty$ as $n \to \infty$   such that $c j_{i_n}^{-r}<\|\tilde f_{i_n}\| < C j_{i_n}^{-r}$ for some constants $c$ and  $C$ and subsequence $\tilde f_{i_n}$ is $j_{i_n}^{-r}$- inconsistent.
\vskip 0.3cm
For problem of hypothesis testing on a density  we make additional requirement in {\sl ii.} that $1+ \tilde f_{i_n}$ should be the densities.

  We provide proofs of Theorems for definition of maxisets in terms of subsection \ref{ss2.3}. However it is easy to see that slight modification of this reasoning  provide proofs for definition of maxisets of subsection \ref{ss2.5} as well.

\section{Necessary and sufficient conditions of consistency \label{sec3}}
   We  consider  problem of signal detection in Gaussian white noise discussed in Introduction. Problem is explored in terms of sequence model.

 The stochastic differential equation (\ref{q1}) can be rewritten  in terms of  a sequence model based on orthonormal system of functions $\phi_j$, $1 \le j < \infty$, in the following form
\begin{equation}\label{q2}
y_j = \theta_j + \frac{\sigma}{\sqrt{n}} \xi_j, \quad 1 \le j < \infty,
\end{equation}
where $$y_j = \int_0^1 \phi_j dY_n(t), \quad \xi_j = \int_0^1\,\phi_j\,dw(t) \quad \mbox{ and}  \quad \theta_j = \int_0^1 f\,\phi_j\,dt.$$ Denote $\yb =  \{y_j\}_{j=1}^\infty$ and $\thetab = \{\theta_j\}_{j=1}^\infty$.

 We can consider $\thetab$ as a vector in Hilbert space $\mathbb{H}$ with the norm $\|\thetab\| = \Bigl(\sum_{j=1}^\infty \theta_j^2\Bigr)^{1/2}$. We implement the same notation $\| \cdot \|$ in $\mathbb{L}_2$ and in $\mathbb{H}$. The sense of this notation will be always clear from context.

In this notation the problem of hypothesis testing can be rewritten in the following form.
One needs to test the hypothesis $\mathbb{H}_0 : \thetab = 0$ versus alternatives $\mathbb{H}_n : \thetab \in V_n =\{\, \theta : \|\thetab\| \ge \rho_n,\, \thetab \in U,\, U \subset \mathbb{H}\,\}$.

Denote
$$
\beta(K_n,V_n) = \sup\{ \beta(K_n,f), f \in V_n\}.
$$
We say that there is consistent sequence of tests for sets of alternatives (\ref{i26})
 if there is sequence of tests $K_n$  such that
\begin{equation}\label{uux}
\limsup_{n\to\infty} (\alpha(K_n) + \beta(K_n,V_n)) < 1.
\end{equation}
We remind that set $U$ is center-symmetric  if $\thetab \in U$ implies $-\thetab \in U$.

 \begin{thm} \label{tqq} Suppose that set U is bounded, convex and center-symmetric. Then there is consistent tests for some sequence $\rho_n  \to 0$ as $n \to \infty$,  iff, the set $U$ is relatively compact.
 \end{thm}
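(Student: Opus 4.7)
The plan is to treat the two implications separately in the sequence model (\ref{q2}).

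\emph{Sufficiency.} If $U$ is relatively compact in $\mathbb{H}$, then $\omega(N):=\sup_{\thetab\in U}\sum_{j>N}\theta_j^2\to 0$ (uniform tail decay characterises relative compactness of bounded sets in $\ell_2$). I would use the truncated quadratic statistic $T_n=\sum_{j\le N_n}(y_j^2-\sigma^2/n)$ with $N_n\to\infty$ slowly, say $N_n=[\log n]$. Under $\mathbb{H}_0$ it has mean $0$ and variance $2N_n\sigma^4/n^2$; for $\thetab\in V_n$ it has mean at least $\rho_n^2-\omega(N_n)$ and variance $O(1/n)$. Choosing $\rho_n\to 0$ satisfying $\rho_n^2\gg\omega(N_n)$ and $\rho_n^4\gg 1/n$ (for instance $\rho_n=n^{-1/6}$ with $N_n$ large enough that $\omega(N_n)\ll n^{-1/3}$), Chebyshev's inequality yields a sequence of tests with $\alpha(K_n)+\beta(K_n,V_n)\to 0$, establishing (\ref{uux}).

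\emph{Necessity.} Assume $U$ is not relatively compact. For each sequence $\rho_n\to 0$ I would construct a prior $\pi_n$ supported on $V_n$ with $\chi^2(P_{\pi_n},P_0)\to 0$; Le Cam's inequality then gives $\alpha(K_n)+\beta(K_n,V_n)\ge 1-\mathrm{TV}(P_0,P_{\pi_n})\to 1$ for every test sequence $K_n$, contradicting (\ref{uux}). Replacing $U$ by its $\mathbb{L}_2$-closure does not affect either side of the equivalence, so assume $U$ closed. The crucial construction is a weakly null sequence in $U$ with norms bounded below: non-compactness produces a $2\delta$-separated sequence $(\thetab^{(k)})\subset U$; boundedness yields a weakly convergent subsequence $\thetab^{(k)}\rightharpoonup\thetab^*\in U$. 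Set $\etab^{(k)}:=(\thetab^{(k)}-\thetab^*)/2$. Center-symmetry gives $-\thetab^*\in U$ and convexity then gives $\etab^{(k)}\in U$; a triangle-inequality argument forces $\|\etab^{(k)}\|\ge\delta/2$ along a subsequence (otherwise $\thetab^{(k)}$ would be norm-Cauchy), while $\etab^{(k)}\rightharpoonup 0$. A diagonal extraction produces indices $k_1<k_2<\cdots$ with $|\langle\etab^{(k_m)},\etab^{(k_{m'})}\rangle|\le 1/\max(m,m')$ for $m\ne m'$.

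Finally, with $\alpha_n:=2\rho_n/\delta\in[0,1]$ for large $n$ (so $\alpha_n\etab^{(k_m)}\in U$ as a convex combination of $0$ and $\etab^{(k_m)}$, and $\|\alpha_n\etab^{(k_m)}\|\ge\rho_n$), let $\pi_n$ be the uniform law on $\{\alpha_n\etab^{(k_m)}:J_n<m\le 2J_n\}$ with $J_n:=\lceil\exp(Cn\rho_n^2)\log(n+1)\rceil$ and $C:=4\sup_{\thetab\in U}\|\thetab\|^2/(\delta^2\sigma^2)$. The Gaussian identity $E_0[L_{\thetab}L_{\thetab'}]=\exp(n\langle\thetab,\thetab'\rangle/\sigma^2)$ gives
$$
\chi^2(P_{\pi_n},P_0)=\frac{1}{J_n^2}\sum_{m,m'}\exp\!\left(\frac{n\alpha_n^2\langle\etab^{(k_m)},\etab^{(k_{m'})}\rangle}{\sigma^2}\right)-1,
$$
and splitting the sum shows that the $J_n$ diagonal terms contribute at most $\exp(Cn\rho_n^2)/J_n=1/\log(n+1)$, while every off-diagonal exponent is bounded by $n\alpha_n^2/(J_n\sigma^2)=o(1)$, so $\chi^2\to 0$. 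The main obstacle is the construction of this almost-orthogonal sequence using only convexity and center-symmetry: coordinate projections need not preserve $U$, and the substitute of subtracting the weak limit $\thetab^*$ is exactly where center-symmetry is indispensable.
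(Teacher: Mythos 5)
Your proposal is correct in substance but takes a genuinely different route from the paper, in both directions. For sufficiency the paper does not argue directly at all: it invokes the existence of uniformly consistent estimators on relatively compact sets (Ibragimov--Khasminskii, Johnstone) and takes the $\mathbb{L}_2$-norm of such an estimator as test statistic, whereas you give a self-contained truncated chi-square test based on the uniform-tail characterisation of relative compactness in $\ell_2$; this works, with two small adjustments: the paper's $V_n$ is defined by $\|f\|^2\ge\rho_n$, so your $\rho_n$ plays the role of $\rho_n^{1/2}$, and your parenthetical choice ($\rho_n=n^{-1/6}$ with $\omega(N_n)\ll n^{-1/3}$) should be replaced by the order of choices you state first (fix $N_n=[\log n]$, then choose $\rho_n$ from $\omega(N_n)$ and $n$), since forcing $\omega(N_n)\ll n^{-1/3}$ may require $N_n$ so large that the null variance $N_n/n^2$ is no longer negligible. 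For necessity the paper builds, by a greedy procedure, exactly orthogonal vectors $\eb_i\in U$, observes that existence of consistent tests forces $d_i=\|\eb_i\|\to 0$ by citing the inconsistency result for orthogonal alternatives (Theorem 5.3 in \cite{er15}), and then converts $d_i\to0$ into total boundedness via an explicit $9\varepsilon$-net construction resting on Lemma \ref{lqq1}; you instead prove the contrapositive directly and quantitatively, extracting from a $2\delta$-separated sequence a weakly null, almost orthogonal family and bounding $\chi^2(P_{\pi_n},P_0)$ for a uniform prior on $J_n\asymp e^{Cn\rho_n^2}\log n$ rescaled points, so you re-derive the needed inconsistency from scratch rather than citing it, and you get the conclusion explicitly for every $\rho_n\to0$. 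Your device of subtracting the weak limit and halving is precisely where the paper uses Lemma \ref{lqq1} (that $(\thetab_1-\thetab_2)/2\in U$). Two points to tighten: the reduction ``assume $U$ closed'' deserves a one-line justification (for fixed $n$ the map $f\mapsto\beta(K_n,f)$ is $\mathbb{L}_2$-continuous, so consistency for $U$ with rate $\rho_n$ yields consistency for $\bar U$ with rate $2\rho_n$), or you can avoid the closure entirely by working with pairwise differences $(\thetab^{(k)}-\thetab^{(k')})/2\in U$, because as written the weak limit $\thetab^*$, hence $\etab^{(k)}$ and the rescaled prior points, are only guaranteed to lie in $\bar U$ rather than in $U$ (segments from $0$ to points of $\bar U$ need not meet $U$ when $U$ has empty interior).
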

 If set $U$ is relatively compact, there  is consistent estimator (see \cite{ih}  and \cite{jo}). Therefore we can choose $\mathbb{L}_2$-norm of consistent   estimator as consistent test statistics.

  Similar Theorem holds for signal detection in linear inverse ill-posed problem.

 In Hilbert space $\mathbb{H}$, we observe
  a realization of  Gaussian random vector
 \begin{equation}\label{il1}
  \yb = A\thetab + \epsilon \xib, \quad \epsilon > 0,
  \end{equation}
  where $A: \mathbb{H} \to \mathbb{H}$ is known  linear operator and $\xib$ is Gaussian random vector having known covariance operator $R: \mathbb{H} \to \mathbb{H}$ and
  $\mathbf{E} [\xib] = 0$.

  We explore the same problem of hypothesis testing  $\mathbb{H}_0 : \thetab = 0$ versus alternatives $\mathbb{H}_n\, :\, \thetab \in V_n$.

For any operator $S: \mathbb{H} \to \mathbb{H}$ denote $\frak{R}(S)$ the rangespace of $S$.

  Suppose that the nullspaces of $A$ and $R$ equal  zero and $\frak{R}(A) \subseteq \frak{R}(R^{1/2})$.

\begin{thm}\label{tqq1} Let  operator $R^{-1/2}A$ be bounded. Suppose that set U is bounded, convex and center-symmetric. Then the statement of Theorem \ref{tqq} holds.
\end{thm}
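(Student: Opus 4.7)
The plan is to reduce Theorem \ref{tqq1} to Theorem \ref{tqq} via a whitening of the noise, after which the inverse observation becomes a direct signal-in-noise observation passed through a bounded linear map $B$; the whitened model is then compared with the direct model in opposite directions to obtain the two implications.

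First I would apply $R^{-1/2}$ to both sides of (\ref{il1}) to obtain the equivalent sufficient observation
\[
\yb' \;:=\; R^{-1/2}\yb \;=\; B\thetab + \epsilon\,\xib', \qquad B \;:=\; R^{-1/2}A,
\]
where $\xib' := R^{-1/2}\xib$ is standard Gaussian white noise (its covariance equals $R^{-1/2} R R^{-1/2} = I$) and $B$ is bounded by hypothesis. The range inclusion $\frak{R}(A)\subseteq\frak{R}(R^{1/2})$ makes $B$ well defined, and the injectivity of $R^{-1/2}$ (since $R$ has trivial nullspace) makes $\yb'$ sufficient for $\yb$, so it suffices to work in the whitened model.

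For the sufficient direction (relative compactness of $U$ implies consistent tests exist for some $\rho_n\to 0$), I would invoke the classical result cited as \cite{ih}, \cite{jo}, \cite{en}: relative compactness of $U$ yields a uniformly consistent estimator $\hat\thetab_n$ of $\thetab$ on $U$ in the inverse model, with $\delta_n := \sup_{\thetab\in U} \mathbb{E}_\thetab\|\hat\thetab_n - \thetab\|^2 \to 0$. Choose $\rho_n := \delta_n^{1/4}$ and reject $\mathbb{H}_0$ when $\|\hat\thetab_n\|^2 > \rho_n^2/2$. Under the null Markov's inequality gives type I error at most $2\delta_n/\rho_n^2 = 2\delta_n^{1/2} \to 0$; under $\thetab\in V_n$ the reverse triangle inequality $\|\hat\thetab_n\|\ge \|\thetab\| - \|\hat\thetab_n - \thetab\|$ together with Markov bounds the type II error uniformly on $V_n$ by a constant multiple of $\delta_n^{1/2} \to 0$.

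For the necessary direction (non-relative-compactness of $U$ implies no consistent test), I would show that any consistent test in the whitened inverse model produces a consistent test in the direct white-noise model of Theorem \ref{tqq}, so impossibility in the direct model transfers. Concretely, given a direct observation $\tilde\yb = \thetab + \tilde\epsilon\,\tilde\xib$ with $\tilde\epsilon := \epsilon/\|B\|$ and $\tilde\xib$ standard white noise, the vector $B\tilde\yb + \tilde\eta$ with an independent Gaussian $\tilde\eta \sim N(0, \epsilon^2 I - \tilde\epsilon^2 BB^*)$ (covariance nonnegative by the choice of $\tilde\epsilon$) has the same law as $\yb' = B\thetab + \epsilon\,\xib'$. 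Hence any test of $\thetab = 0$ versus $\thetab\in V_n$ based on $\yb'$ can be executed on $\tilde\yb$ with identical error probabilities; as $\tilde\epsilon$ tends to $0$ along with $\epsilon$, the necessary half of Theorem \ref{tqq} applies and rules out such a test for non-compact $U$.

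The main obstacle I expect is the sufficient direction: producing a uniformly consistent estimator on $U$ when $B$ is merely bounded and not boundedly invertible. A regularized inversion (Tikhonov or spectral cut-off with a vanishing regularization parameter) has to be combined with the approximation property provided by compactness of $U$ (for each $\delta>0$ a finite-dimensional subspace approximates $U$ within $\delta$) so that bias and variance tend jointly to zero uniformly on $U$. If \cite{ih}, \cite{jo}, \cite{en} do not furnish the result in the required form, the estimator can be constructed by hand as such a regularized inverse projected onto those finite-dimensional approximations of $U$.
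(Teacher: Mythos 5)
Your argument is sound, but its key half is genuinely different from the paper's. The paper does not whiten and transfer: it reruns the proof of Theorem \ref{tqq} verbatim in the inverse model, replacing its one external input (Theorem 5.3 of \cite{er15}, the impossibility of uniformly consistent testing of $\thetab=0$ against an infinite orthogonal family $\eb_i\in U$ with $\|\eb_i\|$ bounded below) by its inverse-problem analogue, Theorem 5.5 of \cite{er15}; the boundedness of $R^{-1/2}A$ enters only to make that lemma applicable, and sufficiency is again delegated to the existence of consistent estimators on compact sets. Your route instead proves necessity by a domination/randomization argument between Gaussian shift experiments: since $\|R^{-1/2}A\thetab\|\le\|B\|\,\|\thetab\|$, the whitened inverse experiment at noise level $\epsilon$ is a randomization of the direct experiment at noise level $\epsilon/\|B\|$ (your explicit kernel, applying $B$ and adding independent noise with covariance $\epsilon^2 I-\tilde\epsilon^2 BB^*$, is the standard construction), so impossibility for non-compact $U$ transfers directly from Theorem \ref{tqq}. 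This buys a self-contained reduction to the already-proved direct case and avoids invoking \cite{er15} a second time; what it costs is some care with the Gaussian-process technicalities you elide, namely that $R^{-1/2}\yb$ and $\tilde\eta$ are only cylindrical (non-trace-class covariance), so the whitening and the identity in law must be phrased as equivalence of Gaussian experiments in the sequence model, using precisely the range condition $\frak{R}(A)\subseteq\frak{R}(R^{1/2})$. On the sufficiency side you coincide with the paper in spirit (norm of a consistent estimator as test statistic), and you correctly flag the one real piece of work: \cite{ih}, \cite{jo} treat the direct model and \cite{en} the deterministic-noise inverse problem, so the uniformly consistent estimator over compact $U$ for merely bounded, injective $B=R^{-1/2}A$ (injectivity follows from the trivial nullspaces of $A$ and $R$) must be assembled as you sketch, via finite-dimensional approximation of the compact set $BU$ and uniform continuity of $(B|_{\bar U})^{-1}$; with that filled in, both directions of your argument are complete.
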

\begin{remark} In definition of consistency  we can replace (\ref{uux})  the requirement of existence of sequence of tests $K_n$ such that $\alpha(K_n) \to 0$ and $\beta(K_n,V_n) \to 0$ as $n \to \infty$. By Theorem on exponential decreasing of type I and type II error probabilities (see  \cite{le73} and  \cite{sch}), Theorems \ref{tqq} and \ref{tqq1} remain valid for this definition as well. \end{remark}

  \section{Quadratic test statistics \label{sec4}}
  \subsection{General setup \label{s4.1}}
We  explore  problem of signal detection in Gaussian white noise (\ref{q1}), (\ref{i30}) discussed in Introduction. The problem is provided in terms of sequence model (\ref{q2}).

 If $U$ is compact ellipsoid in $\mathbb{L}_2(0,1)$,  asymptotically minimax test statistics are  quadratic forms
$$
T_n(Y_n) = \sum_{j=1}^\infty \kappa_{nj}^2 y_j^2 - \sigma^2 n^{-1} \rho_n
$$
with some specially defined coefficients $\kappa^2_{nj}$ (see Ermakov \cite{er90}). Here $\rho_n = \sum_{j=1}^\infty \kappa_{nj}^2$.

If coefficients $\kappa_{nj}^2$ satisfy some regularity assumptions,  test statistics $T_n(Y_n)$ are asymptotically minimax (see \cite{er04}) for the  wider sets of alternatives
$$
\mathbb{H}_n : f \in Q_n(c)  = \{\, f:  R_n(f) > c ,\,\, f \in \mathbb{L}_2(0,1) \,\}
$$
with
$$
R_n(f) = A_n(\thetab) = \sigma^{-4}\,n^{2}\,\sum_{j=1}^\infty\, \kappa_{nj}^2\,\theta_j^2.
$$
for $f = \sum_{j=1}^\infty \theta_j \phi_j$.

A sequence of tests $L_n, \alpha(L_n) = \alpha(1+ o(1))$, $0 <\alpha<1$, is called {\sl asymptotically minimax}  if, for any sequence of tests $K_n, \alpha(K_n) \le \alpha,$ there holds
\begin{equation*}
\liminf_{n\to \infty}(\beta(K_n,Q_n(c)) - \beta(L_n,Q_n(c))) \ge 0.
\end{equation*}
Sequence of test statistics $T_n$ is asymptotically minimax if   tests generated test statistics $T_n$ are asymptotically minimax.

We make the following  assumptions.

\noindent{\bf A1.} For each $n$  sequence $\kappa^2_{nj}$ is decreasing.

\noindent{\bf A2.} There are positive constants $C_1,C_2$ such that, for each $n$, there holds
\begin{equation}\label{q5}
 C_1 < A_n = \sigma^{-4}\,n^2\,\sum_{j=1}^\infty \kappa_{nj}^4 < C_2.
 \end{equation}

 \noindent{\bf A3.} There are positive constants $c_1$ and $c_2$ such that $c_1n^{-2r} \le \rho_n \le c_2 n^{-2r}$.

  Denote $\kappa_n^2=\kappa^2_{nk_n}$ with $k_n = \sup\Bigl\{k: \sum_{j < k} \kappa^2_{nj} \le \frac{1}{2} \rho_n \Bigr\}$.

\noindent{\bf A4.}   There are $C_1$  and $\lambda >1$ such that, for any $\delta > 0$ and for each $n$,
 \begin{equation*}
\kappa^2_{[n,(1+\delta)k_n]} < C_1(1 +\delta)^{-\lambda}\kappa_n^2.
\end{equation*}
\noindent{\bf A5.} There holds $\kappa_{1n}^2  \asymp \kappa_n^2$.  For any $c>1$  there is $C$ such that $\kappa_{[ck_n],n}^2 \ge C\kappa_n^2$ for all $n$.

\noindent{\sl Example}.  Let
$$
\kappa^2_{nj} = n^{-\lambda}\frac{1}{j^{\gamma}  + c n^\beta}, \quad \gamma >1,
$$
with $\lambda = 2 - 2r -\beta$ and $\beta = (2-4r)\gamma$.
Then A1 -- A5 hold.

Note that A1-A5 imply
\begin{equation}\label{u1}
\kappa_n^4=\kappa^4_{nk_n}  \asymp n^{-2}k_n^{-1}\quad \mbox{and} \quad k_n \asymp n^{2-4r}.
\end{equation}
Theorems \ref{tq3} - \ref{tq8} given below represent  realization of program announced in Introduction.
 \subsection{Analytic form of necessary and sufficient conditions of sufficiency \label{s4.2}}
 The results will be provided in terms of  Fourier coefficients of functions $f_n = \sum_{j=1}^\infty \theta_{nj} \phi_j$.
\begin{thm}\label{tq3} Assume {\rm A1-A5}. Sequence of alternatives $f_n$, $cn^{-r} \le \|f_n\| \le Cn^{-r}$, is consistent, iff, there are $c_1$, $c_2$ and $n_0$  such that there holds
\begin{equation}\label{con2}
\sum_{|j| < c_2k_n} |\theta_{nj}|^2 > c_1 n^{-2r}
\end{equation}
for all $n > n_0$.
\end{thm}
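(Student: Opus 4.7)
The plan is to reduce $n^{-r}$-consistency of $f_n$ under the tests generated by $T_n$ to the behavior of the semiparametric distance
\[
R_n(f_n) \;=\; \sigma^{-4} n^2 \sum_{j} \kappa_{nj}^2 \theta_{nj}^2.
\]
Theorem \ref{tq2}, which establishes asymptotic minimaxity of $T_n$ over the sets $Q_n(c)$, provides a Gaussian limiting distribution for $T_n$ under both hypotheses, so the level-$\alpha$ test from $T_n$ has asymptotic power which is a strictly increasing function of $R_n(f_n)$. Hence $f_n$ is consistent iff $\liminf_n R_n(f_n) > 0$ and inconsistent iff $R_n(f_{n_i}) \to 0$ along some subsequence. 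It then remains to show that this dichotomy is equivalent to condition (\ref{con2}), i.e.\ to concentration of $\theta_{nj}^2$ in indices $|j| < c_2 k_n$.

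\textbf{Sufficiency.} Suppose (\ref{con2}) holds with constants $c_1,c_2$. By A1 the sequence $\kappa_{nj}^2$ is decreasing, and A5 gives $\kappa_{n,[c_2 k_n]}^2 \ge C \kappa_n^2$, so $\kappa_{nj}^2 \ge C \kappa_n^2$ for all $|j| \le c_2 k_n$. Consequently
\[
\sum_{j} \kappa_{nj}^2 \theta_{nj}^2 \;\ge\; C \kappa_n^2 \sum_{|j|<c_2 k_n} \theta_{nj}^2 \;\ge\; C c_1 \kappa_n^2 n^{-2r}.
\]
Relation (\ref{u1}) yields $\kappa_n^2 \asymp n^{-2+2r}$, so $R_n(f_n) \ge c > 0$ uniformly in $n$, giving consistency.

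\textbf{Necessity.} Assume (\ref{con2}) fails and fix $\eta > 0$. Split the sum at $|j| = c k_n$ for $c > 1$ to be chosen. By A4 (polynomial decay of $\kappa_{nj}^2$ past $k_n$), A3 and (\ref{u1}),
\[
\sigma^{-4} n^2 \sum_{|j|\ge c k_n} \kappa_{nj}^2 \theta_{nj}^2 \;\le\; \sigma^{-4} n^2 \kappa_{n,[c k_n]}^2 \|f_n\|^2 \;\le\; C c^{-\lambda} n^2 \kappa_n^2 n^{-2r} \;\le\; C' c^{-\lambda},
\]
so once $c$ is large enough the tail is $<\eta/2$. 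For the head, A5 gives $\kappa_{n1}^2 \asymp \kappa_n^2 \asymp n^{-2+2r}$, whence
\[
\sigma^{-4} n^2 \sum_{|j|<c k_n} \kappa_{nj}^2 \theta_{nj}^2 \;\le\; C n^{2r} \sum_{|j|<c k_n} \theta_{nj}^2.
\]
Since (\ref{con2}) fails, for any $c_1>0$ there are arbitrarily large $n$ with $\sum_{|j|<c k_n} \theta_{nj}^2 < c_1 n^{-2r}$; choosing $c_1$ small makes the head $<\eta/2$. Diagonalizing over $\eta \to 0$ (jointly sending $c\to\infty$ and $c_1\to 0$) produces a subsequence on which $R_n(f_n) \to 0$, and inconsistency follows.

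\textbf{Main obstacle.} The principal technical input is Theorem \ref{tq2}: the Gaussian CLT for $T_n$ under the alternative, with explicit control of the cross-term $\sigma n^{-1/2} \sum_j \kappa_{nj}^2 \theta_{nj} \xi_j$. Its variance is bounded by $\kappa_{n1}^2 n^{-1} \sum_j \kappa_{nj}^2 \theta_{nj}^2 = O(n^{-5+2r})$, which is negligible against the null variance of order $n^{-4}$ since $r<1/2$. Once that CLT reduction is accepted, the remaining argument is the head/tail splitting above, where A4 supplies the polynomial decay of $\kappa_{nj}^2$ past $k_n$ and A5 supplies the comparability of $\kappa_{nj}^2$ on the low-frequency block. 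The only slightly delicate point is the diagonalization in the necessity direction, but the estimate $R_n(f_n) \le C c^{-\lambda} + C' n^{2r}\sum_{|j|<ck_n}\theta_{nj}^2$ is uniform in $c$ and makes this routine.
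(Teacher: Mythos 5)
Your proof is correct and follows essentially the same route as the paper: sufficiency by bounding $A_n(\thetab_n)\gtrsim n^2\kappa_n^2\sum_{|j|<c_2k_n}\theta_{nj}^2\asymp 1$ via A5 and (\ref{u1}) and invoking Theorem \ref{tq2}, and necessity by the head/tail split with A4 controlling $\kappa^2_{n,[ck_n]}/\kappa_n^2$ plus a subsequence (diagonalization) argument, which is exactly how the paper handles it by routing through the sufficiency part of Theorem \ref{tq4}. No substantive differences or gaps.
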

Versions of Theorems \ref{tq3}, \ref{tq4} and \ref{tq6} hold for setups of other sections. In these sections indices $j$ may accept negative values and $\theta_{nj}$ may be complex numbers. By this reason we write $|j|$ instead of $j$ and $|\theta_{nj}|$ instead of $\theta_{nj}$ in (\ref{con2}), (\ref{con3}) and (\ref{con19}).
\begin{thm}  \label{tq4}  Assume {\rm A1-A5}.  Sequence of alternatives $f_n$, $cn^{-r} \le \|f_n\| \le Cn^{-r}$, is inconsistent, iff, for all  $c_2$, there holds
\begin{equation}\label{con3}
\sum_{|j| < c_2k_n} |\theta_{nj}|^2 = o( n^{-2r})\quad \mbox{as} \quad {n \to \infty}.
\end{equation}
\end{thm}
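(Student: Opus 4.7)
The plan is to deduce Theorem \ref{tq4} from Theorem \ref{tq3} combined with a direct mean/variance analysis of the quadratic statistic $T_n=\sum_j\kappa_{nj}^2 y_j^2-\sigma^2 n^{-1}\rho_n$. The negation of the concentration condition ``$\sum_{|j|<c_2 k_n}|\theta_{nj}|^2=o(n^{-2r})$ for every $c_2>0$'' is: there exist $c_2>0$, $c_1>0$ and a subsequence $n_i\to\infty$ along which $\sum_{|j|<c_2 k_{n_i}}|\theta_{n_i,j}|^2\ge c_1 n_i^{-2r}$. This is precisely (a subsequential form of) the sufficient condition for consistency in Theorem \ref{tq3}, so the two implications essentially reduce to a contrapositive and a CLT-based power calculation.

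\textbf{Necessity.} Suppose $f_n$ is inconsistent and, for contradiction, that such a subsequence exists. Applying the test construction underlying Theorem \ref{tq3} along $\{n_i\}$ produces tests $K_{n_i}$ of asymptotic level $\alpha$ (defined by suitable cutoffs of $T_{n_i}$) with $\limsup_i\beta(K_{n_i},f_{n_i})<1-\alpha$. Extending these to a full sequence of level-$\alpha$ $T_n$-tests,
\begin{equation*}
\liminf_n\bigl(\alpha(K_n)+\beta(K_n,f_n)\bigr)\le\alpha+\limsup_i\beta(K_{n_i},f_{n_i})<1,
\end{equation*}
contradicting the definition of inconsistency.

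\textbf{Sufficiency.} Assume the concentration condition. Under $\mathbb{H}_0$, $T_n=\sigma^2 n^{-1}\sum_j\kappa_{nj}^2(\xi_j^2-1)$ has mean $0$ and variance $2\sigma^4 n^{-2}\sum_j\kappa_{nj}^4\asymp n^{-4}$ by A2. Under $f_n$ the mean shift is $\delta_n:=\sum_j\kappa_{nj}^2\theta_{nj}^2$ and the variance differs by $4\sigma^2 n^{-1}\sum_j\kappa_{nj}^4\theta_{nj}^2$, which is of strictly lower order once $\delta_n=o(n^{-2})$ is established. Split
\begin{equation*}
\delta_n=\sum_{|j|<ck_n}\kappa_{nj}^2\theta_{nj}^2+\sum_{|j|\ge ck_n}\kappa_{nj}^2\theta_{nj}^2.
\end{equation*}
By A1 and A5 ($\kappa_{n,1}^2\asymp\kappa_n^2$), together with $\kappa_n^2\asymp n^{-2+2r}$ derived from (\ref{u1}), the low-frequency piece is at most $\kappa_{n,1}^2\cdot o(n^{-2r})=o(n^{-2})$ for each fixed $c$. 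By A4 (with $1+\delta=c$), the high-frequency piece is at most $\kappa_{n,[ck_n]}^2\|f_n\|^2\le Cc^{-\lambda}\kappa_n^2 n^{-2r}\asymp Cc^{-\lambda}n^{-2}$. Given $\varepsilon>0$, choose $c$ so large that $Cc^{-\lambda}<\varepsilon/2$; the low-frequency bound is then $<\varepsilon n^{-2}/2$ for $n$ large, hence $\delta_n\le\varepsilon n^{-2}$ eventually, i.e., $\delta_n=o(n^{-2})$.

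A Lindeberg/Lyapunov CLT for weighted sums of independent $\chi^2$-variables, whose negligibility hypothesis $\max_j\kappa_{nj}^4/\sum_i\kappa_{ni}^4\to 0$ follows from A2 and A5, yields the $N(0,1)$ limit for the standardized $T_n$ under both $\mathbb{H}_0$ and $f_n$; since $\delta_n/\sqrt{\mathrm{Var}(T_n)}=o(1)$ the limiting distributions under the two hypotheses coincide. Consequently every sequence of tests of asymptotic level $\alpha$ based on $T_n$ satisfies $\beta(K_n,f_n)\to 1-\alpha$, so $f_n$ is inconsistent. The main technical point is the controlled split of $\delta_n$: A4 supplies the polynomial decay $c^{-\lambda}$ that lets a large cutoff $c$ absorb the tail against the arbitrary prefactor in $o(n^{-2r})$, while A5 (with A1) controls the head via monotonicity of $\kappa_{nj}^2$.
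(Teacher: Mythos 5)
Your proposal is correct and follows essentially the same route as the paper: sufficiency by splitting $\delta_n=n^{-2}A_n(\thetab_n)$ at $ck_n$ and using A4, A5 and (\ref{u1}) in a double-limit argument to get $A_n(\thetab_n)=o(1)$, and necessity by contraposition from the sufficiency part of Theorem \ref{tq3}. The only difference is cosmetic: where the paper concludes by citing the uniform power expansion (\ref{aq1}) of Theorem \ref{tq2}, you rederive the degeneracy of the power via a direct mean/variance and Lindeberg CLT computation, which just reproduces the relevant part of the proof of Theorem \ref{tq2}.
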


Proof of Theorems is based on Theorem \ref{tq2} on asymptotic minimaxity of test statistics $T_n$.

Define sequence of tests $K_n(Y_n) = {\bf 1}_{\{n^{-1}T_n(Y_n) > (2A_n)^{1/2} x_\alpha\}}$, $0 < \alpha <1$, where $x_\alpha$ is defined by the equation $\alpha = 1 - \Phi(x_\alpha)$.
\begin{thm}\label{tq2}
Assume {\rm A1-A5}. Then sequence of tests $K_n(Y_n)$ is asymptotically minimax for the sets  $Q_n(c)$ of alternatives.
There hold $\alpha(K_n) = \alpha + o(1)$ and
\begin{equation}\label{aq1}
\beta(K_n,f_n) = \Phi(x_{\alpha} - R_n(f_n)(2A_n)^{-1/2})(1+o(1))
\end{equation}
uniformly onto all sequences f such that $R_n(f_n)< C$.
\end{thm}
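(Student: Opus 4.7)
My plan is to split the argument into an upper bound, which is a direct CLT-style analysis of the quadratic statistic $T_n$, and a matching asymptotic minimax lower bound obtained by exhibiting a least favourable Bayes prior on $Q_n(c)$.

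\textbf{Upper bound.} Under the null, $T_n=(\sigma^2/n)\sum_{j}\kappa_{nj}^2(\xi_j^2-1)$ is a centred weighted sum of independent chi-squares with variance $2\sigma^4 n^{-2}\sum_j \kappa_{nj}^4=2\sigma^8 n^{-4}A_n$. A Lindeberg check reduces to verifying that the dominant coefficient $\kappa_{n1}^4\asymp \kappa_n^4\asymp n^{-2}k_n^{-1}$ contributes only a fraction $O(k_n^{-1})$ of $\sum_j\kappa_{nj}^4$, which is precisely what \textbf{A5} and (\ref{u1}) guarantee. This yields $\alpha(K_n)=\alpha+o(1)$. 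Under an alternative $f_n$ with $R_n(f_n)<C$, expanding $y_j^2=\theta_{nj}^2+2(\sigma/\sqrt n)\theta_{nj}\xi_j+(\sigma^2/n)\xi_j^2$ gives $\mathbb{E}_{f_n}[T_n]=\sum_j\kappa_{nj}^2\theta_{nj}^2=\sigma^4 n^{-2}R_n(f_n)$ and $\mathrm{Var}_{f_n}(T_n)=2\sigma^8 n^{-4}A_n+4\sigma^2 n^{-1}\sum_j\kappa_{nj}^4\theta_{nj}^2$, where the cross term is bounded by $4\sigma^6 n^{-3}\kappa_n^2 R_n(f_n)=O(n^{-4}k_n^{-1/2})$ and is thus negligible against the leading $n^{-4}$. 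A second Lindeberg check, uniform in $\{R_n(f_n)<C\}$, produces the CLT under alternatives, and the normal-tail identity then gives (\ref{aq1}).

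\textbf{Matching minimax lower bound.} To show no test improves on $K_n$ asymptotically, I would place a product Rademacher prior $\pi_n$ supported on $\{\theta:R_n(\theta)\approx c\}$, taking $\theta_{nj}=\tau_{nj}\epsilon_j$ with independent $\epsilon_j$ uniform on $\{\pm1\}$ and $\tau_{nj}^2=\lambda_n\kappa_{nj}^2$, where $\lambda_n=c/A_n$ is chosen so that $\mathbb{E}_{\pi_n}[R_n(\theta)]=c$. The log-likelihood of $\int P_\theta\,d\pi_n(\theta)$ against $P_0$ equals $\sum_j\log\cosh(n\tau_{nj}y_j/\sigma^2)-(n/2\sigma^2)\sum_j\tau_{nj}^2$; expanding $\log\cosh(u)=u^2/2+O(u^4)$ and using \textbf{A1}, \textbf{A4}, \textbf{A5} to show that $n\tau_{nj}^2/\sigma^2=O(k_n^{-1/2})\to 0$ uniformly in $j$, one verifies that this log-likelihood is asymptotically equivalent to the linear transformation $(n^2\lambda_n/2\sigma^4)T_n$ of the test statistic. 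A direct computation gives this transformation the limit law $N(0,c^2/(2A_n))$ under $P_0$ and $N(c^2/(2A_n),c^2/(2A_n))$ under $\pi_n$; applying Neyman--Pearson to the resulting Gaussian shift problem yields the Bayes risk $\Phi(x_\alpha-c/\sqrt{2A_n})+o(1)$, which lower-bounds the minimax risk on $Q_n(c)$ and matches (\ref{aq1}) at $R_n(f_n)=c$.

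\textbf{Main obstacle.} The CLT and normal-tail steps are routine. The delicate part is the likelihood-ratio expansion for the least favourable prior, where the quartic-and-higher remainder in $\log\cosh(n\tau_{nj}y_j/\sigma^2)$ must be shown to be $o_P(1)$ simultaneously under $P_0$ and under $\pi_n$; assumptions \textbf{A1}, \textbf{A4}, \textbf{A5} enter here essentially, by ruling out any single coordinate being dominant and by ensuring the ``effective dimension'' $k_n$ diverges. A secondary technical point is checking that $R_n(\theta)$ concentrates around $c$ under $\pi_n$ so that $\pi_n$ is essentially supported on $Q_n(c)$; this is an elementary variance computation analogous to the one carried out for $T_n$.
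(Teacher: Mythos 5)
Your upper bound is essentially the paper's own argument: the paper decomposes $\sum_j\kappa_{nj}^2y_j^2=n^{-2}A_n(\boldsymbol{\theta}_n)+2J_{1n}+J_{2n}$, shows the cross term is negligible via $\mathbf{Var}[J_{1n}]\le \sigma^2\kappa_n^2n^{-1}\sum_j\kappa_{nj}^2\theta_{nj}^2$, and then defers the CLT (and the entire lower bound) to Theorem 1 and Lemma 1 of \cite{er90}. Your Lindeberg check, using $\kappa_{n1}^4\big/\sum_j\kappa_{nj}^4=O(k_n^{-1})\to0$ from A5 and (\ref{u1}), is a correct self-contained substitute for that citation, and your variance bookkeeping under the alternatives is right.

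The lower bound, however, contains a claim that is false as stated, and it is exactly the step you single out as the main obstacle. With $\tau_{nj}^2=\lambda_n\kappa_{nj}^2$, $\lambda_n=c/A_n$, put $a_j^2=n\tau_{nj}^2/\sigma^2$, so that under $P_0$ the argument of $\cosh$ is $u_j=a_j\xi_j$. You correctly note $\max_j a_j^2\asymp n\lambda_n\kappa_n^2\asymp k_n^{-1/2}\to0$, but the quartic remainder summed over $j$ is not $o_P(1)$: since $\sum_j a_j^4=n^2\lambda_n^2\sigma^{-4}\sum_j\kappa_{nj}^4=\lambda_n^2A_n=c^2/A_n\asymp1$ by A2, one has $\mathbf{E}\bigl[\sum_j u_j^4/12\bigr]=\tfrac14\sum_j a_j^4=c^2/(4A_n)$, a nondegenerate constant. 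What is true (its variance is $O(\sum_j a_j^8)=O(k_n^{-1})$, and the sixth-order terms are $O(\sum_j a_j^6)=O(k_n^{-1/2})$) is that the quartic part converges in probability to the deterministic constant $-c^2/(4A_n)=-v/2$ with $v=c^2/(2A_n)$. Hence $\log\int (dP_\theta/dP_0)\,d\pi_n$ equals the linear transformation $(n^2\lambda_n/2\sigma^4)T_n$ of the test statistic only up to this additive constant, which is precisely the $-v/2$ centering making the log-likelihood asymptotically $N(-v/2,v)$ under $P_0$ (consistent with $\mathbf{E}_0[L]=1$). Your final numbers therefore survive — a deterministic shift does not change the Neyman--Pearson rejection region, and the Bayes type II error is still $\Phi(x_\alpha-c(2A_n)^{-1/2})+o(1)$ — but the proof cannot be completed in the form you propose: you must prove convergence of the remainder to this constant rather than its negligibility. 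Two smaller points: under your Rademacher prior $R_n(\theta)=\lambda_n A_n$ is exactly constant, so no concentration argument is needed at all; and since $Q_n(c)$ is defined by a strict inequality, take $\lambda_n$ slightly larger than $c/A_n$ so the prior actually charges $Q_n(c)$.
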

A version of Theorem \ref{tq2} for the problem of signal detection
with  heteroscedastic white noise  has been proved in \cite{er03}.

Such a form of conditions in Theorems \ref{tq3} and \ref{tq4} is caused concentration of coefficients $\kappa_{nj}^2$ in the zone $1  \le j < \infty$ of test statistics $T_n$ and $A_n(\thetab_n)$.
\subsection{Maxisets. Qualitative structure of consistent sequences of alternatives}
Denote $s = \frac{r}{2 -4r}$. Then $r = \frac{2s}{1 + 4s}$.
\begin{thm}\label{tq1} Assume {\rm A1-A5}. Then the balls  $\mathbb{\bar B}^s_{2\infty}(P_0)$ are maxisets for  test statistics $T_n(Y_n)$. \end{thm}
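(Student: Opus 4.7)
The plan is to verify each of the four properties (i)--(iv) of the maxiset definition in Subsection~\ref{ss2.3}, using Theorems~\ref{tq3} and~\ref{tq4} as the consistency/inconsistency criteria together with the key calibration $2s(2-4r)=2r$ coming from $s=r/(2-4r)$, and the asymptotic $k_n\asymp n^{2-4r}$ from (\ref{u1}).

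Properties (i) and (ii) are essentially formal. Convexity follows because the functional $\|f\|_\ast := \sup_{\lambda>0}\lambda^{s}\bigl(\sum_{j>\lambda}\theta_j^2\bigr)^{1/2}$ is a supremum of seminorms, hence a norm, and $\bar{\mathbb{B}}^s_{2\infty}(P_0)=\{f:\|f\|_\ast\le P_0^{1/2}\}$ is its closed ball. Ortho-symmetry is immediate since only $\theta_j^2$ appears in the defining condition.

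For (iii), I take any subsequence $f_{n_i}\in\gamma\bar{\mathbb{B}}^s_{2\infty}(P_0)$ with $c n_i^{-r}<\|f_{n_i}\|<Cn_i^{-r}$, write $f_{n_i}=\sum_j\theta_{n_i,j}\phi_j$, and split the norm at $c_2 k_{n_i}$. The Besov tail bound gives
\[
\sum_{j>c_2k_{n_i}}\theta_{n_i,j}^2 \le \gamma^2 P_0(c_2k_{n_i})^{-2s}\le C' c_2^{-2s}n_i^{-2r},
\]
using $k_{n_i}^{-2s}\asymp n_i^{-(2-4r)\cdot 2s}=n_i^{-2r}$. Choosing $c_2$ large enough that $C' c_2^{-2s}<c^2/2$ forces $\sum_{j\le c_2k_{n_i}}\theta_{n_i,j}^2>(c^2/2)n_i^{-2r}$, which is exactly condition (\ref{con2}); Theorem~\ref{tq3} then yields consistency along the subsequence.

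Property (iv) is the heart of the argument. Let $f=\sum_j\theta_j\phi_j\notin\bar{\mathbb{B}}^s_{2\infty}$, so there exists $\lambda_m\to\infty$ with $T_m:=\lambda_m^{2s}\sigma_m^2\to\infty$ where $\sigma_m^2:=\sum_{j>\lambda_m}\theta_j^2$. Set $f_m:=\sum_{j>\lambda_m}\theta_j\phi_j$; the remark in Subsection~\ref{ss2.3} about convex ortho-symmetric sets (every $f_\eta=\sum\eta_j\theta_j\phi_j$ with $|\eta_j|\le|\theta_j|$ lies in $V$, via a convex combination over sign-patterns) guarantees $f_m\in V$. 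Since $f\in\mathbb{L}_2$, $\sigma_m\to 0$, so I may choose integers $n_m\to\infty$ with $n_m\asymp\sigma_m^{-1/r}$, giving $\|f_m\|=\sigma_m\asymp n_m^{-r}$. To check (\ref{con3}), observe $\theta_{m,j}=0$ for $j\le\lambda_m$, so
\[
\sum_{|j|<c_2k_{n_m}}|\theta_{m,j}|^2 = \sum_{\lambda_m<j<c_2k_{n_m}}\theta_j^2,
\]
which vanishes once $c_2k_{n_m}\le\lambda_m$. The key algebra: $k_{n_m}\asymp n_m^{2-4r}\asymp\sigma_m^{-(2-4r)/r}=\sigma_m^{-1/s}$, whence $\lambda_m/k_{n_m}\asymp \lambda_m\sigma_m^{1/s}=T_m^{1/(2s)}\to\infty$. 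Thus for any fixed $c_2$ the sum is eventually zero, and Theorem~\ref{tq4} delivers inconsistency of $\{f_m\}$.

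The main obstacle is engineering the construction in (iv) so that simultaneously $\|f_m\|\asymp n_m^{-r}$ and the quantitative separation $k_{n_m}\ll\lambda_m$ holds. Both requirements hinge on the precise calibration $s=r/(2-4r)$: this exponent choice is exactly what makes $\lambda_m\sigma_m^{1/s}$ reduce to the divergent quantity $T_m^{1/(2s)}$, so that the threshold $\lambda_m$ outruns the effective frequency cutoff $k_{n_m}$ of the test statistic. Everything else is bookkeeping with the Besov tail inequality and the asymptotics of $k_n$ from A1--A5.
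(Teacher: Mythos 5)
Your proposal is correct and takes essentially the same route as the paper: property (iii) is exactly the paper's Lemma \ref{ld1} argument (Besov tail bound at the cutoff $\asymp k_n$ plus Theorem \ref{tq3}), and property (iv) uses the same tail construction $\sum_{j>\lambda_m}\theta_j\phi_j \in V$ with $n_m$ chosen so that $\|f_m\|\asymp n_m^{-r}$ and $\lambda_m/k_{n_m}\asymp T_m^{1/(2s)}\to\infty$. The only cosmetic difference is that you conclude inconsistency in (iv) by citing Theorem \ref{tq4}, whereas the paper verifies $A_{n_l}(\etab_l)=o(1)$ directly via A4 and applies Theorem \ref{tq2}; this is the same computation one level down, so the proofs coincide in substance.
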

Asymptotically minimax tests  have been found in \cite{er18}  for maxisets $\mathbb{\bar B}^s_{2\infty}(P_0)$ with  deleted "small" $\mathbb{L}_2$- ball  and, in \cite{ing02},   for Besov bodies in $\mathbb{ B}^s_{2\infty}$ defined in terms of  wavelets  coefficients.

Balls $\mathbb{\bar B}^s_{2\infty}(P_0)$ arisee in Theorem \ref{tq1} on the following reason. If $f_n = \sum_{j=1}^\infty \theta_{nj} \phi_j$ satisfies $c_1n^{-r} \le \|f_n\| \le C_1n^{-r}$, then, for any $c$, there is $P_0$ such that $f_{1n} = \sum_{j=1}^{[ck_n]} \theta_{nj} \phi_j \in \mathbb{\bar B}^s_{2\infty}(P_0)$   (see Lemma \ref{ld3}). Since coefficients $\kappa^2_{nj}$ with $j > ck_n$ are small for sufficiently large $c$ this allows to prove Theorem \ref{tq3} and Theorems \ref{tq7}, \ref{tq11} given below.
\begin{thm}\label{tq7} Assume {\rm A1-A5}. Then   sequence of alternatives $f_n$, $c n^{-r}\le \|f_{n}\| \le C n^{-r}
$, is consistent, iff,  there are maxiset $\gamma U$, $\gamma > 0$, $0 < \alpha < 1$, and sequence  $f_{1n} \in \gamma U$, $c_1 n^{-r}\le \|f_{1n}\| \le C_1 n^{-r}
$, such that there holds
\begin{equation}
\label{ma1}
\| f_n \|^2 =  \| f_{1n}\|^2  + \|f_n - f_{1n}\|^2.
\end{equation}
\end{thm}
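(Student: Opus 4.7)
The plan is to use Theorems \ref{tq1} and \ref{tq3} as black boxes, so that the maxisets are identified with the Besov bodies $\mathbb{\bar B}^s_{2\infty}(P_0)$ and consistency of any sequence is characterized by low-frequency concentration $\sum_{|j|<c_2 k_n}|\theta_{nj}|^2 > c_1 n^{-2r}$. Both directions will be reduced to verifying or producing such a concentration bound. Throughout I write $\theta_{nj}$ for the Fourier coefficients of $f_n$, $\theta_{1nj}$ for those of $f_{1n}$, and $\gamma_{nj}=\theta_{nj}-\theta_{1nj}$ for those of $g_n=f_n-f_{1n}$, noting that the assumed identity $\|f_n\|^2=\|f_{1n}\|^2+\|f_n-f_{1n}\|^2$ is equivalent to $\sum_j\theta_{1nj}\gamma_{nj}=0$.

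For the forward direction ($f_n$ consistent $\Rightarrow$ decomposition), Theorem \ref{tq3} supplies $c_1,c_2$ with $\sum_{|j|\le c_2 k_n}|\theta_{nj}|^2\ge c_1 n^{-2r}$. I would simply truncate: set $M=[c_2 k_n]$ and $f_{1n}=\sum_{|j|\le M}\theta_{nj}\phi_j$, $g_n=f_n-f_{1n}$. Orthogonality and the Pythagorean identity hold trivially because the two pieces live on disjoint frequency blocks. The norm bounds $c_1^{1/2}n^{-r}\le\|f_{1n}\|\le\|f_n\|\le Cn^{-r}$ are immediate. Finally, membership $f_{1n}\in\gamma\,\mathbb{\bar B}^s_{2\infty}(P_0)$ follows from Lemma \ref{ld3} (cited just after Theorem \ref{tq1}), using $\|f_n\|\le Cn^{-r}$ and $k_n^{2s}\asymp n^{2r}$ coming from $s=r/(2-4r)$ together with (\ref{u1}); one checks that for any $\lambda$, $\lambda^{2s}\sum_{j>\lambda}\theta_{1nj}^2$ is bounded uniformly in $n$ because the truncation kills all frequencies beyond $M$, while for $\lambda<M$ the sum is at most $\|f_n\|^2\le C^2 n^{-2r}$ and $\lambda^{2s}\le M^{2s}\le c\, n^{2r}$.

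For the converse, suppose the decomposition exists with $f_{1n}\in\gamma\,\mathbb{\bar B}^s_{2\infty}(P_0)$ and $c_1n^{-r}\le\|f_{1n}\|\le C_1n^{-r}$. By property {\sl iii.} of the maxiset, $f_{1n}$ is itself consistent, so by Theorem \ref{tq3} we have low-frequency concentration for $f_{1n}$. The goal is to transfer this to $f_n$. Pick $M=[c^\ast k_n]$, write
\begin{equation*}
\sum_{|j|\le M}\theta_{nj}^2=\sum_{|j|\le M}\theta_{1nj}^2+\sum_{|j|\le M}\gamma_{nj}^2-2\sum_{|j|>M}\theta_{1nj}\gamma_{nj},
\end{equation*}
where the rewrite of the cross term uses the global orthogonality $\sum_j\theta_{1nj}\gamma_{nj}=0$. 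The Besov tail bound gives $\sum_{|j|>M}\theta_{1nj}^2\le\gamma^2 P_0 M^{-2s}\le C(c^\ast)^{-2s}n^{-2r}$, while $\sum_{|j|>M}\gamma_{nj}^2\le\|g_n\|^2\le\|f_n\|^2\le C^2n^{-2r}$. Cauchy--Schwarz then bounds the cross correction by $C'(c^\ast)^{-s}n^{-2r}$, and the same tail bound gives $\sum_{|j|\le M}\theta_{1nj}^2\ge\|f_{1n}\|^2-C(c^\ast)^{-2s}n^{-2r}\ge c_1^2 n^{-2r}/2$, both provided $c^\ast$ is chosen large enough depending only on $\gamma, P_0, c_1, C$. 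Dropping the nonnegative term $\sum_{|j|\le M}\gamma_{nj}^2$, we obtain $\sum_{|j|\le M}\theta_{nj}^2\ge c\,n^{-2r}$, and Theorem \ref{tq3} yields consistency of $f_n$.

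The main delicate point is that the $L_2$-orthogonality alone does not force the weighted inner product $\sum\kappa_{nj}^2\theta_{1nj}\gamma_{nj}$ appearing in $R_n(f_n)$ to be nonnegative, so one cannot simply invoke Theorem \ref{tq2} with $R_n(f_n)\ge R_n(f_{1n})$. The essential ingredient is therefore the Besov-type tail decay coming from the maxiset membership, which ensures that $f_{1n}$ itself has most of its mass in the critical low-frequency window $[1,c^\ast k_n]$; without this control, the cross term over high frequencies could in principle erase the signal. Once this tail control is in hand, both directions reduce to bookkeeping on Parseval sums and an application of the Fourier-coefficient criterion in Theorem \ref{tq3}.
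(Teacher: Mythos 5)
Your proposal is correct and follows essentially the same route as the paper: the forward direction is exactly the paper's truncation argument (Lemma \ref{ld3} combined with the concentration criterion of Theorem \ref{tq3}), and your converse is the paper's Lemma \ref{ld4} with only cosmetic differences in bookkeeping (you bound the high-frequency cross term $\sum_{|j|>M}\theta_{1nj}\gamma_{nj}$ by Cauchy--Schwarz, while the paper bounds the tail difference $J_n$, both resting on the Besov tail decay of $f_{1n}$ beyond $c^\ast k_n$ and the Pythagorean identity). No gaps; the argument is sound.
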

\begin{thm}\label{tq11} Assume {\rm A1-A5}. Then,  for any $\varepsilon > 0$, for any $\alpha$, $0 < \alpha < 1$, and for any positive constants $c$  and $C$, $c < C$, there are  $\gamma_\varepsilon$ and $n_\varepsilon$ satisfying the following requirement:
\vskip 0.15cm
 \noindent if sequence of alternatives $f_n$,   $c n^{-r}\le \|f_{n}\| \le Cn^{-r}
$, is consistent, then there is sequence of functions $f_{1n}$ belonging to maxiset $ \gamma_\varepsilon U$, $c_1 n^{-r}\le \|f_{1n}\| \le C_1n^{-r}
$, such that (\ref{ma1}) holds and,
for any $n > n_\varepsilon$, there hold
\begin{equation}
\label{uuu}
|\beta(K_n,f_n) - \beta(K_n,f_{1n})| \le \varepsilon
\end{equation}
and
\begin{equation}
\label{uu1}
\beta(K_n,f_n-f_{1n})  \ge 1 - \alpha - \varepsilon.
\end{equation}
Here $K_n$, $\alpha(K_n) = \alpha (1+ o(1))$ as $n \to \infty$, is  sequence of tests generated test statistics $T_n$.
\end{thm}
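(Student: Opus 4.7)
The plan is to take $f_{1n}$ to be the low-frequency truncation $f_{1n} = \sum_{j \le [c_\varepsilon k_n]} \theta_{nj} \phi_j$ of $f_n$, where $c_\varepsilon$ depends only on $\varepsilon, \alpha, c, C$. Orthogonality (\ref{ma1}) is immediate because $f_n - f_{1n}$ is supported on $\{j > [c_\varepsilon k_n]\}$, complementary to the support of $f_{1n}$. Membership $f_{1n} \in \gamma_\varepsilon U = \gamma_\varepsilon \mathbb{\bar B}^s_{2\infty}(P_0)$ follows from the Besov-body truncation inclusion stated as Lemma \ref{ld3} in the discussion right after Theorem \ref{tq1}, with $P_0 = P_0(c_\varepsilon, C)$ depending only on the cutoff and the norm bound; this fixes $\gamma_\varepsilon$. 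The upper bound $\|f_{1n}\| \le \|f_n\| \le Cn^{-r}$ is trivial.

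The heart of the argument is a tail estimate on the quadratic form $R_n$ governing the asymptotics in Theorem \ref{tq2}. By A1 (monotonicity) and A4 at $\delta = c_\varepsilon - 1$, $\kappa_{nj}^2 \le C_1 c_\varepsilon^{-\lambda} \kappa_n^2$ for every $j > [c_\varepsilon k_n]$, hence
\[
R_n(f_n - f_{1n}) = \sigma^{-4} n^2 \sum_{j > [c_\varepsilon k_n]} \kappa_{nj}^2 \theta_{nj}^2 \le \sigma^{-4} n^2 C_1 c_\varepsilon^{-\lambda} \kappa_n^2 \cdot C^2 n^{-2r} \le C_3 c_\varepsilon^{-\lambda},
\]
where the final step uses (\ref{u1}) and $k_n \asymp n^{2-4r}$ to conclude $n^2 \kappa_n^2 n^{-2r} \asymp n^{1-2r} k_n^{-1/2} \asymp 1$. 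Choose $c_\varepsilon$ so large that $C_3 c_\varepsilon^{-\lambda}/\sqrt{2 C_1}$ is smaller than the modulus of continuity of $\Phi$ near $x_\alpha$ at scale $\varepsilon$ (uniform, since A2 bounds $A_n \in [C_1, C_2]$). Theorem \ref{tq2} then gives $\beta(K_n, g) = \Phi(x_\alpha - R_n(g)(2A_n)^{-1/2})(1+o(1))$ for $g \in \{f_n, f_{1n}, f_n - f_{1n}\}$, and the coordinate-wise additivity $R_n(f_n) = R_n(f_{1n}) + R_n(f_n - f_{1n})$ combined with uniform continuity of $\Phi$ delivers (\ref{uuu}); the same tail bound forces $\beta(K_n, f_n - f_{1n})$ to within $\varepsilon$ of $\Phi(x_\alpha) = 1 - \alpha$, yielding (\ref{uu1}).

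It remains to produce $\|f_{1n}\| \ge c_1 n^{-r}$. Consistency of $f_n$ together with Theorem \ref{tq2} forces $\liminf_n R_n(f_n) \ge c_0 > 0$ for a sequence-dependent $c_0$, so $R_n$-additivity gives $R_n(f_{1n}) \ge c_0 - C_3 c_\varepsilon^{-\lambda}$. Combining with $R_n(f_{1n}) \le \sigma^{-4} n^2 \kappa_{n1}^2 \|f_{1n}\|^2$ and $n^2 \kappa_{n1}^2 \asymp n^{2r}$ (by A5 and (\ref{u1})) yields $\|f_{1n}\| \ge c_1 n^{-r}$ with $c_1 = c_1(f_n)$. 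The main technical point I anticipate is the compatibility between the uniform cutoff $c_\varepsilon$ (calibrated only from $\varepsilon, \alpha, c, C$) and the sequence-dependent detection floor $c_0$: for barely consistent sequences whose $c_0$ is smaller than $C_3 c_\varepsilon^{-\lambda}$, the naive truncation can fail to give a quantitative lower bound on $\|f_{1n}\|$, and in that regime the construction should be refined by tilting $f_{1n}$ onto the span of a low-frequency auxiliary direction plus a rescaled portion of the high-frequency tail, keeping $f_{1n}$ inside the same fixed Besov body while preserving a non-vanishing fraction of $R_n(f_n)$.
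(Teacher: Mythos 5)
Your construction is exactly the paper's: truncate at $j \le c_\varepsilon k_n$, bound the tail contribution to the quadratic form $R_n$ via A4 and (\ref{u1}), invoke Lemma \ref{ld3} to place $f_{1n}$ in a fixed $\gamma_\varepsilon U$, and obtain (\ref{uuu}) and (\ref{uu1}) from Theorem \ref{tq2} together with the coordinatewise additivity of $R_n$ and continuity of $\Phi$ --- this is precisely the paper's proof of Theorem \ref{tq11} (its display (\ref{dub29}) is your tail estimate). The difficulty you flag about securing $c_1 n^{-r}\le\|f_{1n}\|$ for barely consistent sequences, where the sequence-dependent detection level can fall below the uniform tail bound $C_3 c_\varepsilon^{-\lambda}$, is a genuine subtlety, but the paper's own proof is silent on that clause (it verifies only membership, (\ref{uuu}) and (\ref{uu1})), so on everything the paper actually proves your argument coincides with it, and your treatment of the norm lower bound is in fact more explicit than the original.
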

\subsection{Interaction of consistent and inconsistent sequences. Purely consistent sequences}
\begin{thm}  \label{tq5}  Assume {\rm A1-A5}. Let sequence of alternatives $f_n$ be consistent. Then, for any inconsistent sequence of alternatives $f_{1n}$, for tests $K_n$, $\alpha(K_n) = \alpha (1 + o(1))$, $0 < \alpha <1$, generated test statistics $T_n$, there holds
\begin{equation*}
\lim_{n \to \infty} (\beta(K_n,f_n) - \beta(K_n,f_n + f_{1n})) = 0.
\end{equation*}
\end{thm}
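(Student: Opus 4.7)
The plan is to reduce the claim to a deterministic statement about the functional $R_n$ via the asymptotic formula (\ref{aq1}) of Theorem~\ref{tq2}. Provided $R_n(f_n)$ and $R_n(f_n+f_{1n})$ are both bounded, that formula gives
\begin{equation*}
\beta(K_n,f_n) - \beta(K_n,f_n+f_{1n}) = \Phi(x_\alpha - R_n(f_n)(2A_n)^{-1/2}) - \Phi(x_\alpha - R_n(f_n+f_{1n})(2A_n)^{-1/2}) + o(1).
\end{equation*}
Since $A_n$ is bounded above and below by A2 and $\Phi$ is uniformly continuous, it suffices to prove $R_n(f_n+f_{1n}) - R_n(f_n) = o(1)$.

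First I would verify the uniform bound $R_n(f_n) \le C$, which follows from $\|f_n\| \le Cn^{-r}$ together with A1 and A5: one has $\kappa_{nj}^2 \le \kappa_{n1}^2 \asymp \kappa_n^2 \asymp n^{-1}k_n^{-1/2}$, and using $k_n \asymp n^{2-4r}$ from (\ref{u1}) this yields $R_n(f_n) \le \sigma^{-4} n^2 \kappa_{n1}^2 \|f_n\|^2 = O(1)$.

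The crux is to show that any inconsistent sequence $f_{1n}$ satisfies $R_n(f_{1n}) \to 0$. Suppose instead that, along a subsequence, $R_n(f_{1n}) \ge \delta > 0$. If $R_n(f_{1n})$ stays bounded along that subsequence, extract a further subsequence with $R_n(f_{1n}) \to \delta' \ge \delta$ and $A_n \to \bar A \in [C_1, C_2]$; Theorem~\ref{tq2} then yields
\begin{equation*}
\lim_n \beta(K_n, f_{1n}) = \Phi(x_\alpha - \delta'(2\bar A)^{-1/2}) < \Phi(x_\alpha) = 1 - \alpha,
\end{equation*}
contradicting the definition (\ref{vas25}) of inconsistency. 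If instead $R_n(f_{1n}) \to \infty$ along the subsequence, then a direct mean--variance estimate on $T_n$ under $f_{1n}$---or, equivalently, applying Theorem~\ref{tq2} to a truncation $\tilde f_{1n}$ with $R_n(\tilde f_{1n}) = M$ and using a monotonicity argument for the quadratic form---shows that $\beta(K_n, f_{1n})$ can be made arbitrarily close to $0$ by taking $M$ large, again contradicting inconsistency.

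With $R_n(f_{1n}) \to 0$ at hand, I expand
\begin{equation*}
R_n(f_n+f_{1n}) = R_n(f_n) + R_n(f_{1n}) + 2\sigma^{-4} n^2 \sum_j \kappa_{nj}^2 \theta_{nj}\theta_{1nj},
\end{equation*}
and Cauchy--Schwarz bounds the cross term by $2(R_n(f_n) R_n(f_{1n}))^{1/2} \to 0$. This gives $R_n(f_n+f_{1n}) = R_n(f_n) + o(1)$, which together with the first paragraph completes the proof. I expect the main obstacle to be the unbounded-$R_n$ branch of the contradiction argument in the third paragraph: it lies outside the uniformity range of Theorem~\ref{tq2} and requires a separate monotonicity or truncation argument tailored to the quadratic form $T_n$.
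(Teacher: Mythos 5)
Your proposal is correct and follows essentially the same route as the paper: reduce to the quadratic functional via Theorem \ref{tq2}, show that inconsistency of $f_{1n}$ forces $R_n(f_{1n})=A_n(\etab_n)=o(1)$, and control the cross term by Cauchy--Schwarz exactly as in (\ref{co1}). Your subsequence argument (with the bounded and unbounded branches) merely spells out the step the paper states in one line, so no further comment is needed.
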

\begin{thm}\label{tq6} Assume {\rm A1-A5}. Sequence of alternatives $f_n$, $cn^{-r} \le \|f_n\| \le Cn^{-r}$, is purely $n^{-r}$-consistent, iff, for any $\varepsilon >0$, there is $C_1= C_1(\varepsilon)$ such that there holds
\begin{equation}\label{con19}
\sum_{|j| > C_1k_n} |\theta_{nj}|^2 \le \varepsilon n^{-2r}
\end{equation}
for all $n> n_0(\varepsilon)$.
\end{thm}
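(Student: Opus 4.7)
The plan is to reduce both directions to the Fourier-coefficient criteria for consistency and inconsistency given by Theorems \ref{tq3} and \ref{tq4}. Throughout I would write $f_n = \sum_j \theta_{nj}\phi_j$ with $cn^{-r} \le \|f_n\| \le Cn^{-r}$.

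For sufficiency, assume (\ref{con19}). First I would verify that $f_n$ itself is $n^{-r}$-consistent: applying (\ref{con19}) with $\varepsilon = c^2/2$ gives $\sum_{|j| \le C_1 k_n} |\theta_{nj}|^2 \ge (c^2/2)n^{-2r}$ for large $n$, so Theorem \ref{tq3} applies. To obtain pure $n^{-r}$-consistency I would argue by contradiction: suppose some subsequence admits $f_{n_i} = f_{1n_i} + f_{2n_i}$ with $f_{1n_i} \perp f_{2n_i}$, $\|f_{2n_i}\| \ge c_0 n_i^{-r}$, and $(f_{2n_i})$ inconsistent; write $f_{2n_i} = \sum_j \eta_{n_i j}\phi_j$. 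Orthogonality yields the identity $\langle f_{n_i}, f_{2n_i}\rangle = \|f_{2n_i}\|^2 \ge c_0^2 n_i^{-2r}$, and I would split this inner product at $|j| = C_1 k_{n_i}$, the cutoff from (\ref{con19}) for some small $\varepsilon$. Cauchy--Schwarz combined with Theorem \ref{tq4} (taking $c_2 = C_1$) controls the low-frequency piece by $\|f_{n_i}\|\cdot o(n_i^{-r}) = o(n_i^{-2r})$, while Cauchy--Schwarz together with (\ref{con19}) and $\|f_{2n_i}\| \le Cn_i^{-r}$ controls the high-frequency piece by $C\sqrt{\varepsilon}\,n_i^{-2r}$. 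Choosing $\varepsilon$ with $C\sqrt{\varepsilon} < c_0^2/2$ then contradicts the orthogonality identity for large $i$.

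For necessity, suppose (\ref{con19}) fails. Negating the condition produces $\varepsilon_0 > 0$ and sequences $C_i \to \infty$, $n_i \to \infty$ with $\sum_{|j| > C_i k_{n_i}} |\theta_{n_i j}|^2 > \varepsilon_0 n_i^{-2r}$. I would then set $f_{2n_i} := \sum_{|j| > C_i k_{n_i}} \theta_{n_i j}\phi_j$ and $f_{1n_i} := f_{n_i} - f_{2n_i}$; these have disjoint Fourier support, hence are orthogonal, and $\sqrt{\varepsilon_0}\, n_i^{-r} < \|f_{2n_i}\| \le Cn_i^{-r}$. For any fixed $c_2 > 0$, eventually $C_i > c_2$, so the Fourier coefficients of $f_{2n_i}$ vanish on $|j| < c_2 k_{n_i}$; Theorem \ref{tq4} then confirms that $(f_{2n_i})$ is inconsistent, violating pure $n^{-r}$-consistency of $(f_n)$.

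The main obstacle will be the sufficiency step: since orthogonality relates $f_{n_i}$ and $f_{2n_i}$ only through the $\mathbb{L}_2$ inner product rather than coefficient by coefficient, the only accessible handle is the identity $\|f_{2n_i}\|^2 = \langle f_{n_i}, f_{2n_i}\rangle$. Exploiting this identity requires the cutoff $C_1 = C_1(\varepsilon)$ to cooperate simultaneously with the low-frequency Fourier decay of $f_{2n_i}$ supplied by Theorem \ref{tq4} and with the high-frequency decay of $f_n$ supplied by (\ref{con19}); tuning $\varepsilon$ small enough in terms of $c_0$ and $C$ so that both tail bounds combine to beat $\|f_{2n_i}\|^2$ is the delicate quantitative point of the argument.
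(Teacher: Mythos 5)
Your proof is correct and follows essentially the same route as the paper: both directions reduce to the coefficient criteria of Theorems \ref{tq3} and \ref{tq4}, with the necessity part using the high-frequency tail $\sum_{|j|>C_ik_{n_i}}\theta_{n_ij}\phi_j$ as the inconsistent orthogonal summand, exactly as in the paper. The only cosmetic difference is in the sufficiency step, where the paper combines the Pythagorean identity $\|f_{n_i}\|^2=\|f_{1n_i}\|^2+\|f_{2n_i}\|^2$ with a diagonal choice $\varepsilon_i\to 0$, $C_i\to\infty$, while you split the identity $\|f_{2n_i}\|^2=\langle f_{n_i},f_{2n_i}\rangle$ at a fixed cutoff $C_1(\varepsilon)k_{n_i}$ and tune $\varepsilon$ against $c_0$ and $C$; the two arguments exploit the same two localization facts and are equivalent in substance.
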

\begin{thm}\label{tq12} Assume {\rm A1-A5}. Then sequence $f_n$, $c n^{-r}\le \|f_{n}\| \le C n^{-r}
$, is  purely $n^{-r}$-consistent, iff, for any $\varepsilon > 0$,  there is $\gamma_\epsilon$ and sequence of functions $f_{1n}$  belonging to maxiset $\gamma_\epsilon U$ such that  $\|f_n - f_{1n}\| \le \varepsilon n^{-r}$ and (\ref{ma1}) holds.
\end{thm}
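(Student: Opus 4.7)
The plan is to derive Theorem \ref{tq12} by combining the analytic characterization of pure $n^{-r}$-consistency from Theorem \ref{tq6} with the identification of maxisets as the Besov bodies $\mathbb{\bar B}^s_{2\infty}(P_0)$ from Theorem \ref{tq1}. The glue is the elementary relation $k_n^{-2s} \asymp n^{-2r}$, which follows from (\ref{u1}) and $s = r/(2-4r)$, so that a Besov tail at frequency $\lambda = C_1 k_n$ contributes exactly at the target rate $n^{-2r}$.

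For the forward direction, suppose $f_n = \sum_j \theta_{nj}\phi_j$ is purely $n^{-r}$-consistent. Fix $\varepsilon > 0$. By Theorem \ref{tq6} there exist $C_1 = C_1(\varepsilon^2)$ and $n_0$ such that
\begin{equation*}
\sum_{|j| > C_1 k_n} |\theta_{nj}|^2 \le \varepsilon^2 n^{-2r}, \qquad n > n_0.
\end{equation*}
Define $f_{1n} = \sum_{|j| \le C_1 k_n} \theta_{nj}\phi_j$. Then $f_n - f_{1n}$ is supported on the complementary frequencies, so $f_{1n}$ and $f_n - f_{1n}$ are orthogonal, which gives (\ref{ma1}), and $\|f_n - f_{1n}\| \le \varepsilon n^{-r}$ by the tail bound. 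Since $cn^{-r} \le \|f_n\| \le C n^{-r}$, Lemma \ref{ld3} (applied with the constant $c = C_1$) produces $P_0 = P_0(\varepsilon)$ with $f_{1n} \in \mathbb{\bar B}^s_{2\infty}(P_0(\varepsilon))$. Writing the fixed maxiset as $U = \mathbb{\bar B}^s_{2\infty}(P_0^\ast)$ for some reference $P_0^\ast$ and choosing $\gamma_\varepsilon = (P_0(\varepsilon)/P_0^\ast)^{1/2}$ places $f_{1n}$ in $\gamma_\varepsilon U$, as required.

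For the backward direction we verify the criterion of Theorem \ref{tq6}. Given $\varepsilon' > 0$, first apply the hypothesis with a small $\varepsilon$ (to be fixed in a moment) to obtain $\gamma_\varepsilon$ and $f_{1n} = \sum_j \theta^{(1)}_{nj}\phi_j \in \gamma_\varepsilon U$ with $\|f_n - f_{1n}\| \le \varepsilon n^{-r}$. The Besov-body inclusion gives
\begin{equation*}
\sum_{|j| > \lambda} |\theta^{(1)}_{nj}|^2 \le \gamma_\varepsilon^2 P_0^\ast \lambda^{-2s},\qquad \lambda > 0.
\end{equation*}
Setting $\delta_{nj} = \theta_{nj} - \theta^{(1)}_{nj}$, using $|\theta_{nj}|^2 \le 2|\theta^{(1)}_{nj}|^2 + 2|\delta_{nj}|^2$, choosing $\lambda = C_1 k_n$, and using $k_n^{-2s} \asymp n^{-2r}$ yields
\begin{equation*}
\sum_{|j| > C_1 k_n} |\theta_{nj}|^2 \le 2 c_0 \gamma_\varepsilon^2 P_0^\ast C_1^{-2s} n^{-2r} + 2\varepsilon^2 n^{-2r}
\end{equation*}
for an absolute constant $c_0$. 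Fix $\varepsilon = \sqrt{\varepsilon'/4}$; this determines $\gamma_\varepsilon$. Now choose $C_1$ so large that $2 c_0 \gamma_\varepsilon^2 P_0^\ast C_1^{-2s} \le \varepsilon'/2$. The right-hand side is then bounded by $\varepsilon' n^{-2r}$, and Theorem \ref{tq6} yields pure $n^{-r}$-consistency.

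The main obstacle is the order of quantifiers in the backward direction: $\gamma_\varepsilon$ grows as $\varepsilon \to 0$, so one must first fix the small $\varepsilon$ (pinning down $\gamma_\varepsilon$) and only then choose $C_1$ large enough to dominate the Besov tail, rather than trying to make $\varepsilon$ and $C_1$ move simultaneously. A minor notational point is that in the signal-detection and trigonometric setups of sections \ref{sec4}--\ref{sec7} the index $j$ ranges over signed integers with complex $\theta_{nj}$; since Theorem \ref{tq6} and Lemma \ref{ld3} are stated with $|j|$ and $|\theta_{nj}|$, the low-frequency truncation construction and all the estimates above carry over without modification.
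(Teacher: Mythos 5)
Your proof is correct and takes essentially the same route as the paper: the forward direction is exactly the paper's argument (truncate at $C_1(\varepsilon)k_n$ using Theorem \ref{tq6}, then invoke Lemma \ref{ld3} to get $f_{1n}\in\gamma_\varepsilon U$, with (\ref{ma1}) automatic from orthogonality of the truncation), while your backward direction fills in, via the Besov tail decay and $k_n^{-2s}\asymp n^{-2r}$, the sufficiency part the paper omits as ``simple.'' Your quantifier ordering (fix $\varepsilon$ and hence $\gamma_\varepsilon$ first, then take $C_1$ large) is exactly the right way to close that step.
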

\begin{thm}\label{tq8} Assume {\rm A1-A5}. Then sequence of alternatives $f_n$, $cn^{-r} < \|f_n\| < Cn^{-r}$, is purely $n^{-r}$-consistent, iff, for any  $n^{-r}$-inconsistent subsequence of alternatives $f_{1n_i}$,  there holds
\begin{equation}\label{ma2}
\|f_{n_i} + f_{1n_i}\|^2 =  \|f_{n_i} \|^2 + \| f_{1n_i}\|^2 + o(n_i^{-r}),
\end{equation}
where $n_i \to \infty$ as  $i \to \infty$.
\end{thm}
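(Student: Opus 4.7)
The plan is to reduce the statement to the Fourier-coefficient characterizations of purely $n^{-r}$-consistency (Theorem \ref{tq6}) and of inconsistency (Theorem \ref{tq4}), and then to do a simple splitting of the inner product $\langle f_{n_i},f_{1n_i}\rangle$. Throughout I treat the error term in the statement as $o(n_i^{-2r})$, which is the sharp form compatible with the Pythagorean identity (otherwise the $(\Leftarrow)$ direction carries no information, since $\|f_{n_i}+f_{1n_i}\|^{2}$ is already $O(n_i^{-2r})=o(n_i^{-r})$).

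\smallskip
\textbf{Step 1 (Reduction to an inner-product estimate).} Writing $f_n=\sum_j\theta_{nj}\phi_j$ and $f_{1n}=\sum_j\theta_{1nj}\phi_j$, the identity
\[
\|f_{n_i}+f_{1n_i}\|^2=\|f_{n_i}\|^2+\|f_{1n_i}\|^2+2\,\mathrm{Re}\,\langle f_{n_i},f_{1n_i}\rangle
\]
reduces the direction $(\Rightarrow)$ to showing $\langle f_{n_i},f_{1n_i}\rangle=o(n_i^{-2r})$ for every $n^{-r}$-inconsistent $f_{1n_i}$. Since $\|f_{1n_i}\|\le C n_i^{-r}$ (inconsistency combined with $c\,n^{-r}\le \|f_{1n_i}\|\le C\,n^{-r}$), Cauchy--Schwarz gives only the trivial $O(n_i^{-2r})$; the point is to exploit that the two sequences have disjoint essential spectra.

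\smallskip
\textbf{Step 2 (Proof of $(\Rightarrow)$).} Fix $\varepsilon>0$. By Theorem \ref{tq6} applied to the purely $n^{-r}$-consistent sequence $f_n$, there is $C_1(\varepsilon)$ such that $\sum_{|j|>C_1(\varepsilon)k_n}|\theta_{nj}|^{2}\le\varepsilon\, n^{-2r}$ for all large $n$. By Theorem \ref{tq4} applied to the inconsistent subsequence $f_{1n_i}$ (taking the constant $c_2=C_1(\varepsilon)$), $\sum_{|j|<C_1(\varepsilon)k_{n_i}}|\theta_{1n_i j}|^{2}=o(n_i^{-2r})$. Split the inner product at $|j|=C_1(\varepsilon)k_{n_i}$ and apply Cauchy--Schwarz to each piece:
\[
\Bigl|\!\!\sum_{|j|\le C_1 k_{n_i}}\!\!\theta_{n_i j}\overline{\theta_{1n_i j}}\Bigr|\le \|f_{n_i}\|\,\Bigl(\!\!\sum_{|j|\le C_1 k_{n_i}}\!|\theta_{1n_i j}|^{2}\Bigr)^{1/2}=C n_i^{-r}\cdot o(n_i^{-r})=o(n_i^{-2r}),
\]
\[
\Bigl|\!\!\sum_{|j|> C_1 k_{n_i}}\!\!\theta_{n_i j}\overline{\theta_{1n_i j}}\Bigr|\le \Bigl(\!\!\sum_{|j|>C_1 k_{n_i}}\!|\theta_{n_i j}|^{2}\Bigr)^{1/2}\|f_{1n_i}\|\le \sqrt{\varepsilon}\,n_i^{-r}\cdot C n_i^{-r}.
\]
Letting $\varepsilon\to 0$ along a diagonal subsequence yields $\langle f_{n_i},f_{1n_i}\rangle=o(n_i^{-2r})$, as required.

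\smallskip
\textbf{Step 3 (Proof of $(\Leftarrow)$).} Suppose $f_n$ is \emph{not} purely $n^{-r}$-consistent. By definition there exist a subsequence and an orthogonal decomposition $f_{n_i}=f_{1n_i}+f_{2n_i}$ with $\|f_{2n_i}\|>c_1 n_i^{-r}$ and $f_{2n_i}$ inconsistent. Since inconsistency (\ref{con3}) depends only on $|\theta_{nj}|^{2}$, the sequence $-f_{2n_i}$ is also inconsistent, with $\|-f_{2n_i}\|\ge c_1 n_i^{-r}$. Applying the hypothesis to $(f_{n_i},-f_{2n_i})$ and using $f_{n_i}-f_{2n_i}=f_{1n_i}$ together with the orthogonality $\|f_{n_i}\|^{2}=\|f_{1n_i}\|^{2}+\|f_{2n_i}\|^{2}$ gives
\[
\|f_{1n_i}\|^{2}=\|f_{n_i}\|^{2}+\|f_{2n_i}\|^{2}+o(n_i^{-2r})=\|f_{1n_i}\|^{2}+2\|f_{2n_i}\|^{2}+o(n_i^{-2r}),
\]
hence $\|f_{2n_i}\|^{2}=o(n_i^{-2r})$, contradicting $\|f_{2n_i}\|>c_1 n_i^{-r}$.

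\smallskip
The main technical obstacle is the spectral-disjointness estimate in Step 2: neither Theorem \ref{tq6} nor Theorem \ref{tq4} alone suffices, and one has to combine them with matched cutoff $C_1(\varepsilon)k_{n_i}$ so that each piece of the split inner product can be controlled by the appropriate Cauchy--Schwarz bound. Step 3 is routine once one notes that the inconsistency criterion is invariant under sign change of the coefficients, so the hypothesis may be applied to $-f_{2n_i}$.
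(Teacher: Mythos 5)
Your proof is correct, and it is worth comparing with the paper's. The paper dismisses the necessity direction as ``rather evident'' and omits it; you actually prove it, by combining the coefficient characterization of pure consistency (Theorem \ref{tq6}) with the characterization of inconsistency (Theorem \ref{tq4}) and splitting the inner product at the matched cutoff $C_1(\varepsilon)k_{n_i}$ — this is exactly the right mechanism and is a useful addition. For sufficiency the paper argues differently: from ``not purely consistent'' it invokes Theorem \ref{tq6} to get a subsequence whose high-frequency tail beyond $c_{n_i}k_{n_i}$, $c_{n_i}\to\infty$, carries mass of order $n_i^{-2r}$, takes $f_{1n_i}$ to be that tail (which is inconsistent by (\ref{con3})), and notes that the cross term then equals twice the tail's squared norm, so (\ref{ma2}) fails; you instead go straight to the definition, take the orthogonal decomposition $f_{n_i}=f_{1n_i}+f_{2n_i}$ with $f_{2n_i}$ inconsistent, and apply the hypothesis to $\pm f_{2n_i}$ (applying it to $+f_{2n_i}$ alone already gives the cross term $\|f_{2n_i}\|^2$, so the sign flip is not even needed). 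Both are contrapositive constructions of a violating inconsistent subsequence; the paper's buys an explicit violator from $\neg$(\ref{con19}), yours avoids re-invoking Theorem \ref{tq6} in that direction. Your reading of the error term as $o(n_i^{-2r})$ is the right one: with the literal $o(n_i^{-r})$ of (\ref{ma2}) the condition holds trivially for all pairs (all terms are $O(n_i^{-2r})$) and the ``if'' part would be false, and the paper's own proof works at scale $n_i^{-2r}$ (its ``$>c_1n_i^{-r}$'' is likewise a typo for $c_1 n_i^{-2r}$). One small caveat in your Step 3: ``not purely $n^{-r}$-consistent'' also covers the case where $f_n$ fails to be consistent at all, in which no nontrivial decomposition is supplied ``by definition''; this case is handled by the trivial decomposition $f_{1n_i}=0$, $f_{2n_i}=f_{n_i}$, after observing (via Theorems \ref{tq3}--\ref{tq4} and a diagonal argument, or via Theorem \ref{tq2}) that a non-consistent sequence with $cn^{-r}<\|f_n\|<Cn^{-r}$ contains an inconsistent subsequence — the paper's proof of Theorem \ref{tq6} glosses the negation in exactly the same way, so this is a presentation point rather than a gap.
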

\begin{remark}\label{rem1}{\rm Let $\kappa_{nj}^2 > 0$ for $j \le l_n$  and  let $\kappa^2_{nj} = 0$ for $j > l_n$ with $l_n \asymp n^{2-4r}$ as $n \to \infty$.
Analysis of  proofs of Theorems    shows that Theorems \ref{tq3} - \ref{tq8}  remain valid for this setup  if A4 and A5 are replaced with
\vskip 0.25cm
\noindent{\bf A6.} For any $c$, $0 <c <1$, there is $c_1$  such that $\kappa^2_{n,[cl_n]} \ge c_1 \kappa^2_{n1}$ for all $n$.
\vskip 0.25cm
In the reasoning we put $\kappa^2_n = \kappa_{n1}^2$ and $k_n = l_n$.

Theorems   \ref{tq4} and \ref{tq6} hold with the following changes. It suffices to put $c_2 < 1$  in Theorem \ref{tq4} and to take $C_1(\epsilon) < 1$ in Theorem \ref{tq6}.

Proof of corresponding versions of Theorems \ref{tq3} - \ref{tq8} is obtained by simplification of provided reasoning and is omitted.}
\end{remark}
\section{Kernel-based tests \label{sec5}}
We  explore  problem of signal detection of previous section and suppose additionally that functions $f_n$ belong to $\mathbb{L}_2^{per}(\mathbb{R}^1)$ the set of 1-periodic functions such that $f_n(t) \in \mathbb{L}_2(0,1)$. This allows to extend our model on real line $\mathbb{R}^1$ putting $w(t+j) = w(t)$ for all integer $j$ and $t \in [0,1)$ and  to write the forthcoming integrals over all real line.

Define kernel estimator
\begin{equation}\label{yy}
\hat{f}_n(t) = \frac{1}{h_n} \int_{-\infty}^{\infty} K\Bigl(\frac{t-u}{h_n}\Bigr)\, d\,Y_n(u), \quad t \in (0,1),
\end{equation}
where $h_n$ is a sequence of positive numbers, $h_n \to 0$ as $n \to 0$.

The kernel $K$ is bounded function such that the support of $K$ is contained in $[-1,1]$, $K(t) = K(-t)$ for $t \in \mathbb{R}^1$ and $\int_{-\infty}^\infty K(t)\,dt = 1$.

Denote $K_h(t) = \frac{1}{h} K\Bigl(\frac{t}{h}\Bigr)$, $t \in \mathbb{R}^1$ and $h >0$.

In (\ref{yy}) we suppose that, for any $v$, $0 <v  < 1$, we have
\begin{equation*}
 \int_{1}^{1+v} K_{h_n}(t-u)\,dY_n(u) = \int_{0}^{v} K_{h_n}(t-1-u)\,f(u)\,du
 + \frac{\sigma}{\sqrt{n}} \int_{0}^{v} K_{h_n}(t-1-u)\, dw(u)
\end{equation*}
and
\begin{equation*}
 \int_{-v}^{0} K_{h_n}(t-u)\,dY_n(u) =  \int_{1-v}^{1} K_{h_n}(t-u+1)\,f(u)\,du
 + \frac{\sigma}{\sqrt{n} } \int_{1-v}^{1} K_{h_n}(t-u+1)\, dw(u).
\end{equation*}
Define kernel-based   test statistics $$T_n(Y_n) = T_{nh_n}(Y_n) =nh_n^{1/2}\sigma^{-2}\gamma^{-1} (\|\hat f_{n}\|^2- \sigma^2(nh_n)^{-1}\|K\|^2),$$  where
$$
\gamma^2 = 2 \int_{-\infty}^\infty \Bigl(\int_{-\infty}^\infty K(t-s)K(s) ds\Bigr)^2\,dt.
$$
We call sequence of alternatives $f_n$, $cn^{-r} \le \|f_n\| \le Cn^{-r}$, $n^{-r}$-{\sl consistent} if, there is constant $c_1$ such that (\ref{vas1}) holds for any tests $K_n$, $\alpha(K_n) = \alpha\,(1 + o(1))$. $0 < \alpha <1$, generated sequence of test statistics $T_n$ with  $h_n < c_1 n^{4r-2}$, $h_n \asymp n^{4r-2}$.

 We call sequence of alternatives $f_n$, $cn^{-r} \le \|f_n\| \le Cn^{-r}$, $n^{-r}$-{\sl inconsistent} if sequence of alternatives $f_n$ is  inconsistent for all test statistics $T_n$.

Problem will be explored in terms of sequence model.

Let we observe a realization of random process $Y_n(t)$ with $f= f_n$.

 For $-\infty < j < \infty$, denote
$$
\hat K(jh) = \int_{-1}^1 \exp\{2\pi ijt\}\,K_h(t)\, dt,\quad h > 0,
$$
$$
y_{nj} = \int_0^1 \exp\{2\pi ijt\}\, dY_n(t),
\quad
\xi_j = \int_0^1 \exp\{2\pi ijt\}\, dw(t),
$$
$$
\theta_{nj} = \int_0^1 \exp\{2\pi ijt\}\, f_n(t)\, dt.
$$
In this notation we can write kernel estimator in the following form
\begin{equation}\label{au1}
\hat \theta_{nj} = \hat K(jh_n)\, y_{nj} =
\hat K(jh_n)\, \theta_{nj} + \sigma\, n^{-1/2}\, \hat K(jh_n)\, \xi_j, \quad -\infty < j < \infty,
\end{equation}
and test statistics $T_n$ admit the following representation
\begin{equation}\label{au111}
T_n(Y_n) = nh_n^{1/2}\sigma^{-2}\gamma^{-1} \Bigl(\sum_{j=-\infty}^\infty |\hat \theta_{nj}|^2  -  n^{-1}\sigma^2 \sum_{j=-\infty}^\infty |\hat K(jh_n)|^2\Bigr).
\end{equation}
If we put $|\hat K(jh_n)|^2 = \kappa^2_{nj}$, we get that definitions of test statistics $T_n(Y_n)$ in sections   \ref{sec4}. Thus setup of section  \ref{sec5} differs only heteroscedastic white noise. Another difference  in the reasoning is that the function $\hat K(\omega)$, $\omega \in \mathbb{R}^1$, may have zeros.  Since  differences are insignificant the same results are valid. 
Denote $k_n = [n^{2-4r}]$.
\begin{thm}\label{tk3}  The statements of Theorems \ref{tq3}, \ref{tq4}, \ref{tq7}-\ref{tq8} hold for this setup. The statement of Theorem \ref{tq1} holds also with $ \mathbb{\bar B}^s_{2\infty}$  replaced with $\mathbb{B}^s_{2\infty}$.
\end{thm}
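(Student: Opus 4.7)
The plan is to reduce the kernel-based setting to the quadratic test-statistic framework of Section \ref{sec4} and then invoke the already-proved theorems. Starting from the representation (\ref{au111}), one sets $\kappa_{nj}^2 = |\hat K(jh_n)|^2$ for $-\infty < j < \infty$, so that $T_n(Y_n)$ agrees, up to the deterministic scale factor $nh_n^{1/2}\sigma^{-2}\gamma^{-1}$, with a quadratic statistic of the form treated in Section \ref{sec4}. The crucial observation is that the consistency rate is governed by the ratio $R_n(f_n)/\sqrt{A_n}$ appearing in (\ref{aq1}), and this ratio is invariant under any rescaling $\kappa_{nj}\mapsto c_n\kappa_{nj}$; hence the overall factor is immaterial for the classification of $n^{-r}$-consistent sequences. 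The analogue of Theorem \ref{tq2} for the resulting heteroscedastic-noise quadratic form is the one referred to in \cite{er03}, which establishes (\ref{aq1}) in this setting.

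Next one verifies the structural properties of $\{|\hat K(jh_n)|^2\}_{j\in\mathbb{Z}}$ that are needed. With $h_n\asymp n^{4r-2}$ a Riemann-sum argument gives $\sum_j|\hat K(jh_n)|^{2m}\asymp h_n^{-1}\int|\hat K(\omega)|^{2m}\,d\omega$ for $m=1,2$, which fixes the effective $k_n=[h_n^{-1}]\asymp n^{2-4r}$ and reproduces the scalings (\ref{u1}). Because $\hat K(0)=1$ and $\hat K$ is continuous, $|\hat K(jh_n)|^2$ is bounded below by a positive constant on $|j|\le ck_n$ for $c$ small enough that $\hat K$ has no zero in $[-c,c]$; this plays the role of A5, while the boundedness and decay of $\hat K$ at infinity (ensured since $K$ is bounded with compact support) play the role of A4. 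With these substitutes in hand, the consistency characterisations (\ref{con2}), (\ref{con3}) of Theorems \ref{tq3}, \ref{tq4} transfer verbatim: the threshold $c_2k_n$ is precisely the band on which $|\hat K(jh_n)|^2\asymp 1$. Theorems \ref{tq7}, \ref{tq8}, \ref{tq6}, \ref{tq11}, \ref{tq12} then follow from the same orthogonal decomposition of consistent sequences into a maxiset component and an oscillating remainder. For Theorem \ref{tq1} the basis is now the complex exponential system $\{\exp(2\pi ijt)\}_{j\in\mathbb{Z}}$, so the Besov body produced by the criterion $\sup_{\lambda>0}\lambda^{2s}\sum_{|j|>\lambda}|\theta_j|^2\le P_0$ is $\mathbb{B}^s_{2\infty}(P_0)$ instead of $\mathbb{\bar B}^s_{2\infty}(P_0)$; with $s=r/(2-4r)$, a direct calculation (every $f_n$ with $c_1n^{-r}\le\|f_n\|\le C_1n^{-r}$ has the low-frequency truncation $f_{1n}=\sum_{|j|\le ck_n}\theta_{nj}\phi_j$ satisfying $\lambda^{2s}\sum_{|j|>\lambda}|\theta_{1n,j}|^2\le c^{2s}C_1^2\,k_n^{2s}n^{-2r}=O(1)$) places $f_{1n}$ in $\mathbb{B}^s_{2\infty}(P_0)$ for suitable $P_0$, which is the positive half of the maxiset assertion; the converse half is obtained as in Section \ref{sec4}.

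The principal obstacles are twofold, both already flagged by the author: (i) $\hat K(\omega)$ may vanish at isolated points, so $|\hat K(jh_n)|^2$ has zeros inside the active band; and (ii) $|\hat K(jh_n)|^2$ is not monotone in $|j|$, so the monotonicity assumption A1 fails outright. Both are absorbed because the arguments of Section \ref{sec4} use only aggregate sums over the two bands $|j|\le ck_n$ and $|j|>ck_n$, not pointwise comparisons between $\kappa_{nj}^2$ and $\kappa_{n,j+1}^2$: isolated zeros contribute $o(1)$ to the Riemann sums, and the boundedness of $\hat K$ together with its decay at infinity substitutes for A1 and A4 in every step where they are used. This is precisely the author's "insignificant differences"; what remains is a line-by-line audit of the proofs in Section \ref{sec4} with these substitutions, which is why the detailed repetition is omitted.
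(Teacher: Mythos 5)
Your reduction ($\kappa_{nj}^2=|\hat K(jh_n)|^2$, invariance under the scale factor, the heteroscedastic analogue of Theorem \ref{tq2} from \cite{er03}, and the restriction to the band $|j|h_n<b$ where $|\hat K|$ is bounded below for the sufficiency part of (\ref{con2})) matches the paper's route, and the transfer of Theorems \ref{tq7}--\ref{tq8} is indeed routine once the analogues of Theorems \ref{tq3}, \ref{tq4} are in place. The gap is in the part you dispose of with ``the converse half is obtained as in Section \ref{sec4}.'' In this setup {\sl iv.} of the maxiset definition (and likewise $n^{-r}$-inconsistency) must hold for tests with \emph{arbitrary} bandwidths $h_n>0$, $h_n\to0$, and the Section \ref{sec4} argument does not transfer: there, the tail $\tilde f_l=\sum_{|j|\ge m_l}\tau_j\phi_j$ is killed because the \emph{fixed} weights $\kappa^2_{n_lj}$ are uniformly small for $j\ge m_l\gg k_{n_l}$ (A4). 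With a free bandwidth the relevant argument of the kernel transform is $jh$, so a tester may choose $h\asymp m_l^{-1}$ and bring the tail frequencies back into the pass band of $\hat K$; ``decay of $\hat K$ at infinity'' does not substitute for A4 at this step, contrary to your claim. The paper handles exactly this point by a genuinely new argument: it refines the choice of $m_l$ so that one octave block carries a fixed fraction of the tail mass (inequality (\ref{gqq})), takes the matched bandwidth $h_l\asymp m_l^{-1}$, shows $T_{1n_l}(\tilde f_l,h_l)\ge C\,T_{1n_l}(\tilde f_l,h)$ for all $h>0$, and then computes $n_lh_l^{1/2}T_{1n_l}(\tilde f_l)\asymp C_l^{-(1-2r)/2}\to0$, so that even the best-aligned bandwidth fails. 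Your proposal contains none of this, so the maxiset converse for $\mathbb{B}^s_{2\infty}$ and the inconsistency statements are not established.

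Two further inaccuracies feed this gap. First, you identify the zeros and non-monotonicity of $|\hat K(jh_n)|^2$ as the principal obstacles and declare them ``absorbed''; in fact zeros can only decrease $T_{1n}$ and are harmless for inconsistency, while the real new difficulty is the uniformity over bandwidths just described, which you do not address. Second, ``decay of $\hat K$ at infinity (ensured since $K$ is bounded with compact support)'' is not available at the rate A4 would require: boundedness and compact support give only Riemann--Lebesgue decay with no rate, and the paper accordingly never verifies A4 for $|\hat K|^2$ but instead argues directly with $|\hat K|\le1$ and $|\hat K(\omega)|>c$ on $|\omega|<b$. A corrected write-up should replace the ``line-by-line audit'' claim for the converse parts by the block-selection and matched-bandwidth computation above (or an equivalent argument valid uniformly in $h$).
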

In version of Theorem \ref{tq1},  {\sl iv.} in definition of maxisets holds for test statistics $T_n$ having arbitrary values $h_n > 0$, $h_n \to 0$ as $n \to \infty$.

\section{ $\chi^2$-tests \label{sec6}}
Let $X_1,\ldots,X_n$ be i.i.d.r.v.'s having c.d.f. $F(x)$, $x \in (0,1)$.
Let c.d.f. $F(x)$ have a density $1 + f(x) = dF(x)/dx$, $x \in (0,1)$, $f \in L_2^{per}(0,1)$.

We explore the problem of testing hypothesis (\ref{i1}) versus alternatives (\ref{i30}) discussed in Introduction.

Denote $\hat F_n(x)$  empirical c.d.f. of $X_1,\ldots,X_n$.

For any sequence $m_n$, denote  $\hat p_{nj} = \hat F_n(j/m_n) - \hat F_n((j-1)/m_n)$, $1 \le j \le m_n$.

Test statistics of $\chi^2$-tests equal
$$
T_n(\hat F_n) = n\, m_n \, \sum_{j=1}^{m_n}\, (\hat p_{nj}  - 1/m_n)^2.
$$
Let
$$ f_n = \sum_{j=-\infty}^\infty \theta_{nj} \phi_j, \quad \phi_j(x) = \exp\{2\pi i\,j\,x\,\}, \quad x \in (0,1).
$$
We call sequence of alternatives $f_n$, $cn^{-r} \le \|f_n\| \le Cn^{-r}$, $n^{-r}$-{\sl consistent}, if there is $c_1$ such that, (\ref{vas1}) holds for any tests $K_n$, $\alpha(K_n) = \alpha\,(1 + o(1))$. $0 < \alpha <1$, generated sequence of chi-squared test statistics $T_n$ with number of cells $m_n > c_1 n^{2-4r}$, $m_n \asymp n^{2-4r}$.

We call sequence of alternatives $f_n$, $cn^{-r} \le \|f_n\| \le Cn^{-r}$, $n^{-r}$-{\sl inconsistent} if sequence of alternatives $f_n$ is  inconsistent for all tests generated arbitrary test statistics $T_n$.

Denote $k_n = \Bigl[n^\frac{2}{1 + 4s}\Bigr]\asymp n^{2-4r}$.

The differences in versions of Theorems \ref{tq3} --\ref{tq8} for this setup are caused only the requirement that functions $f_n, f_{1n}, f_{2n}$ should be densities.
\begin{thm}\label{tchi3}  The statements of Theorems \ref{tq3}, \ref{tq4} and \ref{tq1}-\ref{tq8} hold for this setup with the following additional assumptions.

 In version of Theorem \ref{tq1} balls $ \mathbb{\bar B}^s_{2\infty}$  is replaced with bodies $\mathbb{\tilde B}^s_{2\infty}$.

 In version  of  Theorem \ref{tq1},   {\sl iv.} in definition of maxisets holds for test statistics $T_n$ with arbitrary choice of number of cells $m_n$.

In version of Theorem \ref{tq11}  we consider only sequences of alternatives $f_n$ such that the following assumption holds.

{\bf B.} There is $c_0$ such that, for all $c > c_0$, functions
$$
1 + f_{cn} =  1 + \sum_{|j| > c m_n} \theta_j \phi_j \quad \mbox{and} \quad 1 + f_n - f_{cn} =  1 + \sum_{|j| < c m_n} \theta_j \phi_j
$$
are densities.

The statement  of Theorem \ref{tq5}  holds only if functions $1 + f_n + f_{1n}$ are densities

We implement definition of purely consistent sequences only for sequences $f_n$ satisfying {\rm B}.
\end{thm}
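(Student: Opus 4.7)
The strategy is to reduce the theorem to the quadratic-form results of Section \ref{sec4} by means of the asymptotic representation of the chi-squared statistic as a quadratic form in estimators of trigonometric Fourier coefficients. Writing $\hat p_{nj} - 1/m_n = \int_{(j-1)/m_n}^{j/m_n} d(\hat F_n - F_0)$ and applying Parseval on $[0,1]$, one obtains
$$T_n(\hat F_n) = n \sum_{k \neq 0} \kappa_{nk}^2 \, |\hat\theta_{nk}|^2,$$
where $\hat\theta_{nk} = n^{-1}\sum_{i=1}^n \phi_k(X_i)$ and the weights $\kappa_{nk}^2$ come from the (aliased) Fourier transform of the rectangular window of width $1/m_n$. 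In particular $\kappa_{nk}^2 \asymp \kappa_{n1}^2 \asymp m_n^{-1}$ uniformly over $|k| < c m_n$ for any fixed $c < 1$, and vanishes for frequencies above $\sim m_n$. This places $T_n(\hat F_n)$ in the setting of Remark \ref{rem1} with $l_n = m_n \asymp n^{2-4r}$, and assumption A6 is immediate; the translation $r = 2s/(1+4s)$ then makes $k_n = [n^{2/(1+4s)}]$ the relevant cutoff, as claimed.

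The main engine is Theorem \ref{chi2}, the chi-squared counterpart of Theorem \ref{tq2}, which delivers the uniform asymptotic
$$\beta(K_n, f_n) = \Phi\!\bigl(x_\alpha - R_n(f_n)(2A_n)^{-1/2}\bigr)(1 + o(1))$$
for $R_n(f_n)$ bounded. Once this is in hand, Theorems \ref{tq3}, \ref{tq4}, \ref{tq6}, \ref{tq7}, \ref{tq8}, \ref{tq11}, \ref{tq12} transfer essentially verbatim, via the same manipulations of the sums $\sum_{|j| < c k_n} |\theta_{nj}|^2$ versus $\sum_{|j| > C k_n} |\theta_{nj}|^2$ that Section \ref{sec4} uses; the restriction $c_2 < 1$ in the chi-squared analog of Theorem \ref{tq4} and $C_1(\varepsilon) < 1$ in the analog of Theorem \ref{tq6} are forced by $\kappa_{nk}^2 = 0$ for $|k| > m_n$ as in Remark \ref{rem1}. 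For Theorem \ref{tq1} the maxispace becomes $\tilde{\mathbb{B}}^s_{2\infty}$ rather than $\bar{\mathbb{B}}^s_{2\infty}$ because the density constraint $\int (1+f_n)\,dx = 1$ forces $\theta_0 = 0$; property iv.\ in the definition of maxiset is verified by a tail construction that uses assumption D from Section \ref{ss2.3} to guarantee $1 + \sum_{|i|>l_0} \theta_i\phi_i$ is a density.

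The role of assumption B is to preserve admissibility in the chi-squared model along the decompositions used in the constructions of Theorems \ref{tq7}, \ref{tq11}, \ref{tq12}: when writing $f_n = f_{1n} + (f_n - f_{1n})$ with $f_{1n} = \sum_{|j| \le c m_n} \theta_{nj}\phi_j$ the low-frequency piece lying in the maxiset, both $1 + f_{1n}$ and $1 + (f_n - f_{1n})$ must be genuine densities for the constructed alternatives to be meaningful under the chi-squared test. Similarly, the extra hypothesis in the chi-squared version of Theorem \ref{tq5} that $1 + f_n + f_{1n}$ be a density ensures that the combined alternative is legitimate; without this constraint the identity $\beta(K_n, f_n) - \beta(K_n, f_n + f_{1n}) \to 0$ has no meaning in the density model. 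The restriction of purely consistent sequences to those satisfying B serves the same purpose.

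The main obstacle is the sharp form of Theorem \ref{chi2}: establishing the uniform Gaussian approximation of $T_n(\hat F_n)$ over alternatives in the regime $m_n \asymp n^{2-4r}$. The per-cell multinomial deviations carry third- and fourth-cumulant remainders of order $(n/m_n)^{-1/2}$ that must be summed over $m_n$ cells and shown to be negligible against the scale $(2A_n)^{1/2}$; the exponent $0 < r < 1/2$ gives exactly the margin $n^{-1}m_n = n^{-4r}$ needed to close this estimate. These bounds must moreover be uniform in the alternative, which requires a quantitative Edgeworth-type analysis of the quadratic form $T_n(\hat F_n)$ not only under the null but along sequences $f_n$ with $\|f_n\| \asymp n^{-r}$. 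This step parallels the Gaussian computation underlying Theorem \ref{tq2} but is complicated by the non-Gaussianity and the dependence between cell counts.
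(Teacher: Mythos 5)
Your central reduction does not hold, and it hides exactly the difficulty the paper's proof is built to handle. The identity you assert, $T_n(\hat F_n) = n\sum_{k\neq 0}\kappa_{nk}^2|\hat\theta_{nk}|^2$, is false: the chi-squared statistic is the squared $\mathbb{L}_2$-norm of the projection of the (empirical) signal onto the $m_n$-dimensional space of cell indicators, and in the trigonometric basis this quadratic form is only \emph{block}-diagonal, coupling all frequencies congruent modulo $m_n$. This is precisely the content of Lemma \ref{ch1}, where the population functional is
\begin{equation*}
n^{-1}m_n^{-1}T_n(F) = m_n \sum_{k=-\infty}^\infty \sum_{j \ne km_n} \frac{\theta_j \bar\theta_{j-km_n}}{4\pi^2 j(j-km_n)}\bigl(2 - 2\cos(2\pi j/m_n)\bigr),
\end{equation*}
with genuine cross terms $\theta_j\bar\theta_{j-km_n}$, $k\neq 0$. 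In particular your claim that the weights ``vanish for frequencies above $\sim m_n$'' is wrong: a density carried entirely by frequencies $|j|>i_n\gg m_n$ still contributes to the cell averages through aliasing; the paper does not set this contribution to zero but bounds it by $C m_n i_n^{-1}\sum_{|j|>i_n}|\theta_j|^2$ (Lemma \ref{lch2}). So the setup is not that of Remark \ref{rem1}, A6 is not ``immediate,'' and the restrictions $c_2<1$ and $C_1(\varepsilon)<1$ you import from Remark \ref{rem1} are not part of Theorem \ref{tchi3} — the theorem as stated requires (\ref{con3}) for all $c_2$, and the paper's proof of the chi-squared version of Theorem \ref{tq4} handles arbitrary $c_2>2c_1$ precisely by invoking Lemma \ref{lch2}.

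The second consequence of the false diagonal picture is that in your scheme the low- and high-frequency parts of $f_n$ contribute additively to the statistic, whereas in reality their cell averages can interfere and partially cancel. The consistency direction (chi-squared version of Theorem \ref{tq3}) therefore needs an argument you do not supply: the paper projects $\bar f_n=\sum_{|j|<c_2k_n}\theta_{nj}\phi_j$ and $\tilde f_n=f_n-\bar f_n$ onto the space $\mathbb{L}_{2,n}$ of piecewise constants, controls the projection error of $\bar f_n$ by the modulus of continuity via Ulyanov's lemma, $\|\bar g_n\|\le 4\pi(c_2 k_n/m_n)^{1/2}\|\bar f_n\|$, and uses the resulting bound $|(\bar h_n,\tilde h_n)|\le\delta C^2 n^{-2r}$ (with $m_n=[c_3k_n]$, $c_3$ large) to show $n^{-1/2}T_n^{1/2}(F_n)\asymp n^{-r}$ before applying Theorem \ref{chi2}; analogous triangle-inequality plus Lemma \ref{lch2} estimates give the versions of Theorems \ref{tq4}, \ref{tq11}, \ref{tq5} and part \emph{iv.} of Theorem \ref{tq1}. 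None of this transfer is ``essentially verbatim.'' Finally, Theorem \ref{chi2} is not something to be established here: the paper takes it from Ermakov (1997); your closing paragraph describes an Edgeworth-type analysis as the main obstacle but neither carries it out nor cites the known result, while the actual work required by Theorem \ref{tchi3} — the transition from indicator functions to trigonometric functions with control of aliasing — is the step your proposal replaces with an incorrect identity.
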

In proof of version of Theorem  \ref{tq11} for chi-squared tests, we show that there is $C_\varepsilon = C(\varepsilon,c,C,c_0)$ such that, for densities $1 + f_{1n} =  1 + \sum_{|j| < C_\varepsilon m_n} \theta_j \phi_j$, (\ref{ma1}), (\ref{uuu}) and (\ref{con19}) hold. By Lemma \ref{ld3} given below, there is $\gamma_\varepsilon$ such that $f_{1n} \in \gamma_\varepsilon U$.

Proof of Theorems are based on the following Theorem \ref{chi2} on asymptotic minimaxity of chi-squared tests given below. Theorem \ref{chi2} is  summary of results of Theorems 2.1 and 2.4 in \cite{er97}.

For c.d.f. $F$, denote  $p_{l} =  F(l/m_n) -  F((l-1)/m_n)$, $1 \le l \le m_n$.

Denote $\Im$ the set of all distribution functions.

Define  functionals
$$
T_n(F)= n m_n  \sum_{l=1}^{m_n} (p_{l}  - 1/m_n)^2.
$$
For sequence $\rho_n > 0$, define  sets of alternatives
$$
Q_n(\rho_n) = \Bigl\{\, F: T_n(F) \ge \rho_n,\, F \in \Im\, \Bigr\}.
$$
The definition of asymptotic minimaxity of tests is the same as in section \ref{sec4}.

Define the tests
$$
K_n = {\bf 1}_{\{2^{-1/2}m_n^{-1/2}(T_n(\hat F_n) - m_n + 1) > x_\alpha\}}
$$
where $x_\alpha$ is defined the equation $\alpha = 1- \Phi(x_\alpha)$.
\begin{thm}\label{chi2} Let $m_n \to \infty$, $m_n^{-1} n^2 \to \infty$ as $n \to \infty$. Let
\begin{equation*}
0 <\, \liminf_{n \to \infty}\, m_n^{-1/2}\rho_n \le \limsup_{n \to \infty}\, m_n^{-1/2} \rho_n < \, \infty.
\end{equation*}
Then $\chi^2$-tests  $K_n$, $\alpha(K_n) = \alpha + o(1)$, $0 < \alpha <1$, are asymptotically minimax for the sets of alternatives $Q_n(\rho_n)$.
There holds
\begin{equation*}
\beta(K_n,F) = \Phi(x_\alpha - 2^{-1/2}m_n^{-1/2}T_n(F_n))(1 +o(1))
\end{equation*}
uniformly onto sequences $F_n$ such that $ T_n(F_n) \le C m_n^{1/2} $.
\end{thm}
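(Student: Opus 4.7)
The plan is to split the analysis into two parts: (i) a sharp power expansion for the $\chi^2$-tests $K_n$ along sequences $F_n$ with $T_n(F_n)\le Cm_n^{1/2}$, and (ii) a matching asymptotic minimax lower bound obtained from a least-favorable Bayesian prior supported in $Q_n(\rho_n)$.

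For the upper bound I would introduce the multinomial counts $N_l=n\hat p_{nl}$ and expand the Pearson form around the signal:
\[
T_n(\hat F_n)=T_n(F_n)+2W_n+V_n,\qquad W_n=m_n\sum_l (N_l-np_l)(p_l-1/m_n),\qquad V_n=\frac{m_n}{n}\sum_l (N_l-np_l)^2.
\]
Direct calculations with the multinomial covariance yield $\mathbf{E} V_n=m_n-1-T_n(F_n)/n$ and $\mathrm{Var}(V_n)=2(m_n-1)(1+o(1))$ as long as $m_n=o(n^2)$, while $\mathrm{Var}(W_n)\le CT_n(F_n)/n=o(m_n)$ under the cap $T_n(F_n)\le Cm_n^{1/2}$. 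A Lindeberg central limit theorem for the quadratic term $V_n$ (or equivalently for its Poissonized analogue) then produces
\[
2^{-1/2}m_n^{-1/2}\bigl(T_n(\hat F_n)-m_n+1\bigr)=2^{-1/2}m_n^{-1/2}T_n(F_n)+Z_n,\qquad Z_n\Rightarrow N(0,1),
\]
uniformly over $F_n$ in the class; the claimed expression for $\beta(K_n,F_n)$ follows, and the specialization $T_n(F_n)=0$ gives $\alpha(K_n)=\alpha+o(1)$.

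For the lower bound I would construct a Bayesian prior $\pi_n$ supported in $Q_n(\rho_n)$ from independent symmetric perturbations of the cells (Rademacher signs, with a sparsity pattern calibrated to the regime $\rho_n\asymp m_n^{1/2}$), chosen so that every draw $F^{\eta}$ satisfies $T_n(F^{\eta})=\rho_n$. A direct second-moment computation of the likelihood ratio $\Lambda_n=dP^{\pi_n}/dP_0$ shows $\mathbf{E}_0[\Lambda_n^2]=\exp(\mu_n^2)(1+o(1))$ with $\mu_n^2=2^{-1}m_n^{-1}\rho_n^2(1+o(1))$, and the standard Le~Cam two-point style argument then forces the Bayes risk of any level-$\alpha$ test to be at least $\Phi(x_\alpha-2^{-1/2}m_n^{-1/2}\rho_n)(1+o(1))$ against $\pi_n$, matching the power of $K_n$.

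The main obstacle is the uniform CLT in step (i): cells with $np_l$ of bounded order would destroy the quadratic-form approximation, so one must truncate or Poissonize and verify a Lindeberg remainder bound, which is precisely where the hypotheses $m_n\to\infty$ and $m_n^{-1}n^2\to\infty$ are consumed; the cap $T_n(F_n)\le Cm_n^{1/2}$ is simultaneously needed to keep the cross term $W_n$ of smaller order than the fluctuations of $V_n$, so that the normal limit is governed purely by the quadratic noise. The calibration of the prior in step (ii) is the other delicate point, since a naive uniform Rademacher perturbation has $n\delta_n/m_n$ possibly large and its log-likelihood requires a careful expansion beyond the Gaussian approximation.
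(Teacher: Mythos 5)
You should first note that the paper contains no proof of this statement at all: Theorem \ref{chi2} is imported verbatim as ``a summary of Theorems 2.1 and 2.4 in \cite{er97}'', so your sketch is being measured against that reference, not against anything in the present text. Your architecture --- a power expansion $T_n(\hat F_n)=T_n(F_n)+2W_n+V_n$ for the upper bound and a least-favorable prior for the minimax lower bound --- is indeed the standard route to such results. But as written the proposal has a genuine gap in the lower bound. Knowing $\mathbf{E}_0[\Lambda_n^2]=\exp\{\mu_n^2\}(1+o(1))$ only controls the $\chi^2$-divergence of the mixture from the null; a ``Le Cam two-point style'' argument built on it yields bounds of the form $1-\alpha-c\,(\mathbf{E}_0\Lambda_n^2-1)^{1/2}$ and cannot produce the exact constant $\Phi(x_\alpha-2^{-1/2}m_n^{-1/2}\rho_n)$. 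For the sharp statement one must prove asymptotic normality of $\log\Lambda_n$ under the null (an expansion of the mixture likelihood ratio with remainder control), and this is precisely where the sparse regime bites: the hypotheses only force $n^2/m_n\to\infty$, so expected cell counts $n/m_n$ may tend to zero and the log-likelihood of a Rademacher cell perturbation is far from its Gaussian quadratic approximation. You flag this in your last sentence, but the proposal then leaves the decisive step unproved.

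The upper bound is closer to complete but still defers its hardest point and contains a quantitative slip. Over the class $T_n(F_n)\le Cm_n^{1/2}$ one only has $\max_l|p_l-1/m_n|\le C^{1/2}m_n^{-1/4}n^{-1/2}$, which exceeds $1/m_n$ as soon as $m_n\gg n^{2/3}$ (allowed, since only $m_n=o(n^2)$ is assumed); hence neither $\mathrm{Var}(V_n)=2m_n(1+o(1))$ uniformly over the class nor the Lindeberg condition for the centered quadratic term is routine, and ``truncate or Poissonize'' is exactly the part that needs to be carried out, uniformly in $F_n$, for the claimed uniform expansion of $\beta(K_n,F_n)$. Your bound $\mathrm{Var}(W_n)\le CT_n(F_n)/n$ is also off by a factor of $n$: with multinomial covariances one gets $\mathrm{Var}(W_n)=m_n^2n\bigl[\sum_l(p_l-1/m_n)^2p_l-(\sum_l(p_l-1/m_n)p_l)^2\bigr]$, which is of order $T_n(F_n)$ when $p_l=O(1/m_n)$ and can be larger otherwise; the conclusion $\mathrm{Var}(W_n)=o(m_n)$ does survive under $m_n=o(n^2)$, but the estimate as stated is wrong. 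Finally, asymptotic minimaxity requires handling the whole set $Q_n(\rho_n)$, i.e.\ also alternatives with $T_n(F)\gg m_n^{1/2}$ (a Chebyshev argument) and an argument that the supremum of the type II error is attained near the boundary $T_n(F)=\rho_n$; these are minor but should be said.
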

Note that for implementation of Theorem \ref{chi2} we need to make a transition from indicator functions to trygonometric functions. Such a transition is realised in Appendix.
\section{ Cramer -- von Mises tests \label{sec7}}
We  consider Cramer -- von Mises test statistics as functional
$$
T^2(\hat{F}_n -F_0) = \int_0^1 (\hat{F}_n(x) - F_0(x))^2\, d F_0(x)
$$
depending on empirical distribution function $\hat F_n$.
Here $F_0(x)=x$, $x \in (0,1)$.

Denote $K_n$ sequence of Cramer- von Mises tests.

 A part of further results holds if we consider as alternatives sequence of c.d.f.'s $F_n$ instead of sequence of densities $1 + f_n$.   We shall suppose that c.d.f.'s $F_n$ are Borel functions. Denote $\beta_F(K_n)$ - type II error probability for alternative $F$.

 For any $a>0$, denote $\Im_n(a) = \{F\, : \, n T^2(F - F_0)  > a,\, F \mbox{ is c.d.f.}\}$.

We say that Cramer - von Mises test is asymptotically unbiased if, for any $a > 0$, for any $\alpha$, $0 < \alpha < 1$, for tests $K_n$, $\alpha(K_n) = \alpha + o(1)$, there holds
\begin{equation}\label{dur}
 \limsup_{n \to \infty}\sup_{F \in \Im_n(a)} \beta_F(K_n) < 1 -\alpha.
\end{equation}
 Nonparametric tests satisfying (\ref{dur}) are called also uniformly consistent (see Ch. 14.2 in \cite{le}).

 The results are based on the following Theorem \ref{tcm}.

 \begin{thm}\label{tcm} The following three statements hold.

 {\sl i.} For sequence of alternatives $F_n$, there is sequence of Cramer - von Mises tests $K_n$
 such that
 \begin{equation}\label{cru0}
 \lim_{n \to \infty} (\alpha(K_n) + \beta_{F_n}(K_n)) = 0,
 \end{equation}
 holds, iff, there holds
 \begin{equation} \label{cru}
 \lim_{n \to \infty} n\, T^2(F_n - F_0) = \infty.
 \end{equation}

 {\sl ii.} Cramer - von Mises tests are asymptotically unbiased.

 {\sl iii.} For any sequence of Cramer - von Mises tests $K_n$,
 \begin{equation*}
 \lim_{n \to \infty} (\alpha(K_n) + \beta_{F_n}(K_n)) \ge 1,
 \end{equation*}
 holds, iff, there holds
 \begin{equation*}
 \lim_{n \to \infty} n\, T^2(F_n - F_0) = 0.
 \end{equation*}
 \end{thm}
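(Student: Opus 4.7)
The plan is to base everything on a single decomposition of the Cramer--von Mises statistic and then apply Anderson's inequality on Gaussian measures for the strict unbiasedness in (ii). With
$$
T_n := n\,T^2(\hat F_n - F_0),\quad \tilde W_n := \sqrt{n}(\hat F_n - F_n),\quad h_n := \sqrt{n}(F_n - F_0),\quad a_n := \|h_n\|_{L^2}^2 = nT^2(F_n - F_0),
$$
one has $T_n = \|\tilde W_n + h_n\|_{L^2}^2 = \|\tilde W_n\|^2 + 2\langle \tilde W_n, h_n\rangle + a_n$. From $\mathbf{E}_{F_n}\|\tilde W_n\|^2 = \int F_n(1-F_n)\,dx \le 1/4$ and the elementary variance bound $\mathrm{Var}_{F_n}\langle \tilde W_n, h_n\rangle \le 4 a_n$ (the inner product is a centered i.i.d.\ sum whose summand is pointwise $O(\sqrt{a_n})$), one gets uniformly in $F_n$
\begin{equation*}
T_n \;=\; a_n + O_P(1) + O_P(\sqrt{a_n}). \tag{$\star$}
\end{equation*}
Under the null, $T_n \Rightarrow \|B\|^2$ by Donsker, with $B$ a Brownian bridge. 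When $a_n$ is bounded, $\|F_n - F_0\|_{L^2}\to 0$ together with monotonicity of c.d.f.'s and continuity of $F_0$ upgrades to $\|F_n - F_0\|_\infty \to 0$, and the push-forward $\tilde W_n = [\sqrt{n}(\hat G_n^U - I)]\circ F_n$ yields $\tilde W_n \Rightarrow B$ in Skorokhod topology under $F_n$.

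For (i) ``if'', I would pick $c_n \to \infty$ with $c_n = o(a_n)$: null tightness gives $\alpha(K_n) \to 0$, while $(\star)$ yields $\beta_{F_n}(K_n) \le P_{F_n}(|T_n - a_n| \ge a_n/2) \to 0$. For the converse, if $a_{n_k} \le M$ along a subsequence, then by $(\star)$ the statistic $T_{n_k}$ is $O_P(1)$ under both hypotheses; any CvM test $\mathbf{1}\{T_n > c_n\}$ with $\alpha \to 0$ must have $c_n \to \infty$, forcing $\beta_{F_{n_k}} \to 1$. For (iii) ``if'', when $a_n \to 0$ the middle and shift terms of $(\star)$ are $o_P(1)$, so $T_n = \|\tilde W_n\|^2 + o_P(1) \Rightarrow \|B\|^2$ under $F_n$; this coincides with the null limit, so every CvM test satisfies $\alpha + \beta \to 1$. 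The reverse direction of (iii) is the contrapositive of (ii): if $\limsup a_n > 0$, applying (ii) along the subsequence to any fixed-level $\alpha_0 \in (0,1)$ CvM test yields $\liminf(\alpha + \beta) < 1$.

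The hard step is the strict asymptotic unbiasedness in (ii). I would argue by contradiction: suppose some $F_{n_k} \in \Im_{n_k}(a)$ satisfy $P_{F_{n_k}}(T_{n_k} > c_{n_k}) \to \alpha$, with $c_{n_k} \to c_\alpha$ the $(1-\alpha)$-quantile of $\|B\|^2$. The subcase $a_{n_k} \to \infty$ is ruled out by $(\star)$; otherwise pass to a further subsequence with $a_{n_k} \to a' \in [a,\infty)$ and, by weak compactness of the bounded set $\{\|h_{n_k}\|^2 \le a'\}$ in $L^2$, with $h_{n_k} \rightharpoonup h$. Write $h_{n_k} = h + r_{n_k}$, $r_{n_k} \rightharpoonup 0$, $\|r_{n_k}\|^2 \to a' - \|h\|^2$. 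The essential observation is that the Brownian-bridge covariance operator $C$ is compact, so $C r_{n_k} \to 0$ strongly; together with $C_{n_k} \to C$ in operator norm (from $\|F_{n_k}-F_0\|_\infty \to 0$), this gives
$$
\mathrm{Var}_{F_{n_k}}\langle \tilde W_{n_k}, r_{n_k}\rangle \;=\; \langle r_{n_k}, C_{n_k} r_{n_k}\rangle \;\to\; 0,
$$
hence $\langle \tilde W_{n_k}, r_{n_k}\rangle = o_P(1)$. Combining with $\langle h, r_{n_k}\rangle \to 0$ and expanding $T_{n_k} = \|\tilde W_{n_k} + h\|^2 + 2\langle \tilde W_{n_k}+h, r_{n_k}\rangle + \|r_{n_k}\|^2$ produces the limit law
$$
T_{n_k} \;\Rightarrow\; \|B + h\|^2 + (a' - \|h\|^2).
$$
Finally, Anderson's inequality for the centered Gaussian measure of $B$ on $L^2$ gives $P(\|B+h\|^2 \le c) \le P(\|B\|^2 \le c)$, strictly so whenever $h \ne 0$; combined with the nonnegative additional shift $a' - \|h\|^2$, and noting that $a' \ge a > 0$ forces at least one of the two sources of slack to be strictly positive, one obtains $P(\text{limit} > c_\alpha) > \alpha$, contradicting the choice of subsequence.
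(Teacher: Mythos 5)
Your overall architecture is sound and, for the key statement {\sl ii.}, genuinely different from the paper's. The paper first transfers the problem from the empirical process to the Brownian bridge uniformly over $\{F: T(F-F_0)<cn^{-1/2}\}$ via the Hungarian construction (Lemmas \ref{lc1}--\ref{lc4}, including the uniform density bound of Lemma \ref{lc4} needed for moving thresholds), and then proves Lemma \ref{lum1} coordinatewise in the Karhunen--Lo\`eve basis: a diagonal extraction gives limits $\eta_j$ of $n^{1/2}\theta_{nj}/j$, and two cases are distinguished according to whether the tail mass $n\sum_{j>C_k}\theta_{nj}^2 j^{-2}$ vanishes or stays bounded away from zero. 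Your argument is the coordinate-free counterpart: along the contradiction subsequence you use weak compactness to write $h_{n_k}=h+r_{n_k}$ with $r_{n_k}\rightharpoonup 0$, and compactness of the bridge covariance operator (plus $\|C_{n_k}-C\|\to 0$, which indeed follows from $\|F_{n_k}-F_0\|_\infty\to 0$, itself a correct consequence of $L^2$-smallness and monotonicity) kills the cross term, giving the limit law $\|B+h\|^2+(a'-\|h\|^2)$; your $h$ plays the role of the paper's $\{\eta_j\}$ and your escaping mass $a'-\|h\|^2$ is the paper's case {\it ii.} You also avoid the Hungarian construction altogether, since the variance identity $\mathrm{Var}\langle \tilde W_n,g\rangle=\langle g,C_ng\rangle$ is exact at finite $n$ and a subsequence argument needs only plain weak convergence, not the uniform approximation of Lemma \ref{lc1}; the price is that the paper's uniform lemmas are reused later (e.g.\ in Theorem \ref{tom5}), whereas your proof yields only the theorem at hand. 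Your treatment of {\sl i.} and {\sl iii.} matches the standard route the paper takes (sufficiency as in Ingster, necessity from {\sl ii.}), and your direct proof of necessity in {\sl i.} is even slightly more elementary.

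The one step you assert rather than prove is the strict form of Anderson's inequality, $\mathbf{P}(\|B+h\|^2\le c)<\mathbf{P}(\|B\|^2\le c)$ for $h\ne 0$: Anderson's theorem itself gives only the non-strict inequality, and the strictness is exactly where the paper does real work (Lemma \ref{lum2}, proved by conditioning on a Karhunen--Lo\`eve coordinate in which the shift has a nonzero component and using a one-dimensional comparison). You should either reproduce that short conditioning argument or cite a strict Anderson lemma for nondegenerate Gaussian measures; with it, your case analysis closes correctly, since when $a'-\|h\|^2>0$ the non-strict Anderson bound combined with the strictly increasing null c.d.f.\ of $\|B\|^2$ on $(0,\infty)$ already gives strict slack, and when $a'=\|h\|^2$ one has $\|h\|^2\ge a>0$, so the strict Anderson step applies. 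With that single addition your proposal is a complete and correct proof.
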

 Sufficiency in {\sl i.} and {\sl iii.} in Theorem \ref{tcm} is wellknown (see  \cite{ing87}).
Necessary conditions in {\sl i.} and in {\sl iii.}  follows  easily  from {\sl ii.}

 If c.d.f. $F$ has density, we can write the functional $T^2(F-F_0)$ in the following form (see Ch.5,  \cite{wel})
\begin{equation*}
T^2(F-F_0) = \int_0^1\int_0^1(\min\{s,t\} - st)\, f(t)\, f(s)\, ds\, dt
\end{equation*}
with $f(t) = d(F(t) - F_0(t))/dt$.

If we consider the orthonormal expansion of function
\begin{equation*}
f(t) =  \sum_{j=1}^\infty \theta_j \phi_j(t)
\end{equation*}
on trigonometric basis $\phi_j(t) = \sqrt{2}\, \cos(\pi j t)$, $1 \le j <\infty$, then we get
\begin{equation}\label{om3}
nT^2(F-F_0) = n\,\sum_{j=1}^\infty\frac{\theta_j^2}{\pi^2 j^2}.
\end{equation}
Denote $k_n = [n^{(1-2r)/2}]$.

In Theorems \ref{tom1}, \ref{tom6} and \ref{tom4} given below, we follow the definition of consistency provided in subsection \ref{ss2.1}.
 \begin{thm}\label{tom1} For orthonormal system of functions $\phi_j(t) = \sqrt{2}  \cos(\pi j t)$, $t \in [0,1)$, $j = 1,2, \ldots$,
the bodies  $\mathbb{\bar B}^s_{2\infty}(P_0)$ with $s = \frac{2r}{1 - 2r}$,  $r = \frac{s}{2+2s}$, are maxisets for Cramer -- von Mises test statistics.
\end{thm}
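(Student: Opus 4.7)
I would verify the four defining properties of a maxiset from subsection~\ref{ss2.3} for $U=\mathbb{\bar B}^s_{2\infty}(P_0)$ in the cosine basis, combining the explicit representation (\ref{om3}) with Theorem~\ref{tcm}. Properties~(i) convexity and~(ii) ortho-symmetry are immediate: $\bigl(\sum_{j>\lambda}\theta_j^2\bigr)^{1/2}$ is a seminorm in $\theta$ (so the $P_0$-sublevel set of its supremum over $\lambda$ is convex), and the defining inequality only involves $\theta_j^2$, so sign changes preserve membership in $U$.

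\textbf{Property (iii): consistency on $\gamma U$.} Suppose $f_n=\sum\theta_{nj}\phi_j\in\gamma U$ with $cn^{-r}\le\|f_n\|\le Cn^{-r}$, so $\sum_{j>\lambda}\theta_{nj}^2\le\gamma^2P_0\lambda^{-2s}$. I would choose $\lambda_n$ so that $\gamma^2P_0\lambda_n^{-2s}=c^2n^{-2r}/2$; since $s(1-2r)=2r$, this forces $\lambda_n\asymp n^{(1-2r)/2}\asymp k_n$. Then by (\ref{om3}),
\[
nT^2(F_n-F_0)=n\sum_{j\ge 1}\frac{\theta_{nj}^2}{\pi^2 j^2}\ge\frac{n}{\pi^2\lambda_n^2}\Bigl(\|f_n\|^2-\sum_{j>\lambda_n}\theta_{nj}^2\Bigr)\ge\frac{n}{\pi^2\lambda_n^2}\cdot\frac{c^2}{2}n^{-2r}\ge c_1>0.
\]
Pick any $a\in(0,c_1)$; eventually $F_n\in\Im_n(a)$, so by the asymptotic unbiasedness statement {\sl ii} of Theorem~\ref{tcm}, $\limsup_n \beta_{F_n}(K_n)<1-\alpha$ for every sequence of CvM tests with $\alpha(K_n)=\alpha+o(1)$. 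Hence every subsequence of such $f_n$ is consistent.

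\textbf{Property (iv): saturation outside the body.} Let $f=\sum\theta_j\phi_j\notin\bar{\mathbb{B}}^s_{2\infty}$, so $\sup_\lambda\lambda^{2s}\sum_{j>\lambda}\theta_j^2=\infty$. I would pick $\lambda_i\to\infty$ with $M_i:=\lambda_i^{2s}\sum_{j>\lambda_i}\theta_j^2\to\infty$, set $g_i:=\sum_{j>\lambda_i}\theta_j\phi_j$ (so $\|g_i\|^2=M_i\lambda_i^{-2s}$), and choose $n_i\to\infty$, after thinning, so that $k_{n_i}\asymp\lambda_i M_i^{-1/(4s)}$; this gives $\lambda_i/k_{n_i}\to\infty$ together with $\lambda_i/k_{n_i}\le M_i^{1/(2s)}$. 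Putting $\eta_i:=n_i^{-r}/\|g_i\|=M_i^{-1/4}$ (eventually $<1$) and $f_{n_i}:=\eta_i g_i$, the Fourier coefficients satisfy $|\eta_i\theta_j|\le|\theta_j|$, so by the ortho-symmetry/convexity remark at the end of subsection~\ref{ss2.3}, $f_{n_i}\in V$. Moreover $\|f_{n_i}\|=n_i^{-r}$, and since $f_{n_i}$ is supported on frequencies $j>\lambda_i\gg k_{n_i}$,
\[
n_i T^2(F_{n_i}-F_0)=n_i\eta_i^2\sum_{j>\lambda_i}\frac{\theta_j^2}{\pi^2 j^2}\le\frac{n_i\eta_i^2\|g_i\|^2}{\pi^2\lambda_i^2}=\frac{n_i^{1-2r}}{\pi^2\lambda_i^2}\asymp\Bigl(\frac{k_{n_i}}{\lambda_i}\Bigr)^2\to 0.
\]
By Theorem~\ref{tcm}~{\sl iii}, the subsequence $f_{n_i}$ is inconsistent, giving~(iv).

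\textbf{Main obstacle.} The delicate step is the joint calibration in (iv): the index $n_i$ must be matched to the tail scale $\lambda_i$ so that $\lambda_i\gg k_{n_i}$ (which is what lets the $1/j^2$-weighting of CvM suppress the signal and force inconsistency) while simultaneously $\eta_i\le 1$ (which is what keeps $f_{n_i}$ inside the convex, ortho-symmetric set $V$). The blow-up $M_i\to\infty$ coming from the non-Besov assumption on $f$ provides exactly the slack needed to satisfy both conditions, and this is the only place where $f\notin\bar{\mathbb{B}}^s_{2\infty}$ is essentially used. The pattern mirrors the constructions in section~\ref{sec4}, but the relevant exponent $s=2r/(1-2r)$ (rather than $s=r/(2-4r)$) reflects the CvM-specific $1/j^2$ weight in (\ref{om3}).
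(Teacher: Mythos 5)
Your proposal is correct and follows essentially the same route as the paper: property {\sl iii.} is exactly the paper's combination of Lemma \ref{lom25} (your $\lambda_n\asymp k_n$ tail bound) with the Cramer--von Mises version of Theorem \ref{tq3} via (\ref{om3}) and part {\sl ii.} of Theorem \ref{tcm}, and property {\sl iv.} is the same tail construction $\tilde f_l=\sum_{j\ge m_l}\tau_j\phi_j$ with the $1/j^2$ weight forcing $n_lT^2\to 0$ and Theorem \ref{tcm} {\sl iii.} giving inconsistency. The only cosmetic difference is your calibration in {\sl iv.}: the paper keeps the raw tail and picks $n_l$ so that $\|\tilde f_l\|\asymp n_l^{-r}$, whereas you shrink the tail by $\eta_i\le 1$ (which, one should note, still yields densities under condition D) and pick $n_i$ through $k_{n_i}$ versus $\lambda_i$ --- an equivalent bookkeeping.
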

\begin{thm}\label{tom6} The statement of Theorem \ref{tq11} holds for this setup with the following difference: we  assume that B holds.
\end{thm}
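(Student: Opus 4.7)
The plan is to mimic the proof of Theorem \ref{tq11}, using the $1/j^2$ weights from \eqref{om3} in place of the quadratic-form coefficients $\kappa_{nj}^2$. I would expand $f_n = \sum_{j\ge 1}\theta_{nj}\phi_j$ in the cosine basis and define
$$
f_{1n} = \sum_{j \le C_\varepsilon k_n} \theta_{nj}\phi_j, \qquad f_{2n} = f_n - f_{1n},
$$
with $k_n = [n^{(1-2r)/2}]$ and $C_\varepsilon$ (to be chosen large) exceeding the constant $c_0$ from assumption B, so that $1+f_{1n}$ and $1+f_{2n}$ are densities. The orthogonality \eqref{ma1} is immediate from the disjoint Fourier supports.

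For the norm bounds, the Cramer--von Mises analogue of Theorem \ref{tq3} forces $\sum_{j \le c_2 k_n}\theta_{nj}^2 > c_1 n^{-2r}$, yielding $c_1 n^{-r} \le \|f_{1n}\| \le Cn^{-r}$ once $C_\varepsilon > c_2$. Membership $f_{1n} \in \gamma_\varepsilon \bar{\mathbb{B}}^s_{2\infty}(P_0)$ follows by Lemma \ref{ld3}: for $\lambda \le C_\varepsilon k_n$,
$$
\lambda^{2s}\sum_{\lambda<j\le C_\varepsilon k_n}\theta_{nj}^2 \le (C_\varepsilon k_n)^{2s}\|f_n\|^2 \asymp n^{s(1-2r)}\cdot n^{-2r} = O\bigl(C_\varepsilon^{2s}\bigr),
$$
using $s=2r/(1-2r)$; the tail sum is empty for $\lambda > C_\varepsilon k_n$.

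The quantitative core is the tail estimate for the Cramer--von Mises signal: by \eqref{om3},
$$
nT^2(f_{2n}) = n\sum_{j>C_\varepsilon k_n}\frac{\theta_{nj}^2}{\pi^2 j^2} \le \frac{n\|f_n\|^2}{\pi^2 (C_\varepsilon k_n)^2} \le \frac{C'}{C_\varepsilon^2},
$$
which tends to $0$ uniformly in $n$ as $C_\varepsilon\to\infty$. Since $T^2$ is diagonal in the cosine basis, the same bound controls $|nT^2(f_n) - nT^2(f_{1n})|$.

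The main obstacle is converting these deterministic Cramer--von Mises signal bounds into bounds on type II error probabilities, since unlike the quadratic-form setting of Theorem \ref{tq2} there is no explicit Gaussian formula $\Phi(x_\alpha - R/\sqrt{2A})$ for $\beta(K_n,\cdot)$. My approach is the empirical process decomposition $\sqrt{n}(\hat F_n - F_0) = U_n^F + h$ with $h = \sqrt{n}(F-F_0)$, so that
$$
nT^2(\hat F_n - F_0) = \|U_n^F\|_{F_0}^2 + 2\langle U_n^F,h\rangle_{F_0} + \|h\|_{F_0}^2.
$$
Weak convergence of $U_n^F$ to the $F_0$-Brownian bridge $U_0$ for $F\in\{F_n,F_{1n},F_{2n}\}$ (valid since all three converge to $F_0$), continuity of the limit CDF of $nT^2$ under $H_0$, and the estimate $\|h_n - h_{1n}\|_{F_0}^2 = nT^2(f_{2n}) \le C'/C_\varepsilon^2$ combine via the continuous mapping theorem to give \eqref{uuu}. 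The same machinery applied to $f_{2n}$ alone, together with part {\sl ii.} of Theorem \ref{tcm} and its quantitative refinement, yields $\beta(K_n,f_{2n}) \ge 1-\alpha-\varepsilon$, establishing \eqref{uu1} once $C_\varepsilon$ is large enough and $n \ge n_\varepsilon$.
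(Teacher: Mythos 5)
Your decomposition is the same one the paper uses: truncate at $j\le C_\varepsilon k_n$ with $C_\varepsilon>c_0$ so that B preserves densities, get \eqref{ma1} from disjoint Fourier support, get maxiset membership by the Lemma \ref{ld3}/Lemma \ref{lom25} computation, and bound the tail signal by $nT^2(F_{2n}-F_0)\le C'/C_\varepsilon^{2}$; together with the norm inequality $|T(F_n-F_0)-T(F_{1n}-F_0)|\le T(F_n-F_{1n})$ this is exactly the paper's skeleton (its proof of Theorem \ref{tom6} cites \eqref{pll7} and the estimates from the proof of Theorem \ref{tom5}). The gap is in the step you yourself flag as the main obstacle: the passage from the small Cram\'er--von Mises distance between the drifts to closeness of rejection probabilities. ``Weak convergence of $U_n^F$, continuity of the limit CDF of $nT^2$ under $H_0$, and the continuous mapping theorem'' do not deliver \eqref{uuu}: the drifts $h_n=\sqrt n(F_n-F_0)$ and $h_{1n}$ are only bounded in the CvM seminorm and need not converge, so the drifted statistics have no fixed limit law to which CMT applies, and the two probabilities being compared in \eqref{uuu} are noncentral (drifted) laws, not the null law, so continuity of the null CDF is beside the point.

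What is actually needed — and what the paper supplies — is a uniform-in-drift argument: (a) the uniform Gaussian reduction of Lemma \ref{lc1} (Hungarian construction, with Lemmas \ref{lc2}--\ref{lc3} controlling the replacement of $b(F_n(t))$ by $b(t)$, i.e.\ the change of sampling measure that your $U_n^{F}$ versus $U_n^{F_{1n}}$ comparison also has to face), and (b) the anti-concentration statement of Lemma \ref{lc4}: the laws of $T^2\bigl(b(t)+\sqrt n(F_n(t)-F_0(t))\bigr)$ have densities bounded uniformly over all drifts with $nT^2(F_n-F_0)\le C$. With (a) and (b), the deterministic bound $|T(b+h_n)-T(b+h_{1n})|\le T(h_n-h_{1n})\le \sqrt{C'}/C_\varepsilon$ converts into $|\beta(K_n,f_n)-\beta(K_n,f_{1n})|\le\varepsilon$ for $C_\varepsilon$ large, and the same mechanism (with the null law, whose density is bounded) gives \eqref{uu1}; note that part {\sl ii.}\ of Theorem \ref{tcm} by itself bounds type II errors away from $1-\alpha$ for large signals and does not give the lower bound \eqref{uu1} — you need the quantitative small-signal statement, which again is Lemma \ref{lc4} plus Lemma \ref{lc1}. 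So your proposal is repairable by invoking these lemmas in place of the weak-convergence/CMT step, but as written that step does not go through.
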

In Theorem \ref{tom5} given below we consider the definition of consistency proposed in subsection \ref{ss2.1} for c.d.f'.s $F_n$ instead of sequence $f_n$.
\begin{thm}  \label{tom5}   Let sequence of alternatives $F_n$  be consistent. Let $F_{1n}$ be any inconsistent sequence of alternatives such that $F_{2n} = F_n(x) + F_{1n}(x) - F_0(x)$  are c.d.f.'s. Then, for tests $K_n$, $\alpha(K_n) = \alpha (1 + o(1))$, $0 < \alpha < 1$,  there holds
\begin{equation*}
\lim_{n \to \infty} (\beta_{F_n}(K_n) - \beta_{F_{2n}}(K_n)) = 0.
\end{equation*}
\end{thm}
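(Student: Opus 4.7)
The plan is to reduce the comparison of type II error probabilities to an analytic estimate on $nT^2(F-F_0)$ and then to invoke the sharper form of Theorem \ref{tcm}. Writing $g_n = F_n - F_0$ and $g_{1n} = F_{1n} - F_0$, so that $F_{2n}-F_0 = g_n + g_{1n}$, the first step is to translate the hypotheses into this functional: consistency of $F_n$ gives $nT^2(g_n) \to \infty$ (by Theorem \ref{tcm}(i)), while inconsistency of $F_{1n}$ gives $nT^2(g_{1n}) \to 0$ (by Theorem \ref{tcm}(iii)).

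Next, I would expand $T^2$ using the bilinear form $\langle u, v\rangle = \int_0^1 u\,v\,dF_0$, obtaining
\[
nT^2(g_n + g_{1n}) = nT^2(g_n) + 2n\langle g_n, g_{1n}\rangle + nT^2(g_{1n}).
\]
By Cauchy--Schwarz, $|n\langle g_n, g_{1n}\rangle| \le \sqrt{nT^2(g_n)}\,\sqrt{nT^2(g_{1n})}$, which is $o(nT^2(g_n))$ because the second factor is $o(1)$ while the first diverges. Hence $nT^2(F_{2n} - F_0) = nT^2(g_n)(1 + o(1)) \to \infty$.

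The remaining step is to show that for any sequence of Cramer--von Mises tests $K_n$ with $\alpha(K_n) = \alpha(1 + o(1))$, the divergence $nT^2(F-F_0) \to \infty$ forces $\beta_F(K_n) \to 0$. Under $F$ I would decompose
\[
nT^2(\hat F_n - F_0) = nT^2(F-F_0) + 2n\langle \hat F_n - F, F-F_0\rangle + nT^2(\hat F_n - F).
\]
The Dvoretzky--Kiefer--Wolfowitz inequality gives $\|\hat F_n - F\|_\infty = O_p(n^{-1/2})$, so $nT^2(\hat F_n - F) = O_p(1)$ and, via Cauchy--Schwarz, $|n\langle \hat F_n - F, F-F_0\rangle| \le O_p(n^{1/2})\,T(g_n) = O_p(\sqrt{nT^2(g_n)})$. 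Therefore $nT^2(\hat F_n - F_0) = nT^2(F-F_0)(1 + o_p(1)) \to \infty$ in probability. Since the null distribution of $nT^2(\hat F_n - F_0)$ is tight (it converges weakly to the standard Cramer--von Mises law), the critical value is bounded, and the rejection probability tends to $1$. Applying this conclusion to $F = F_n$ and $F = F_{2n}$ yields $\beta_{F_n}(K_n) \to 0$ and $\beta_{F_{2n}}(K_n) \to 0$, so the difference vanishes.

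The main subtlety is upgrading the asymptotic unbiasedness of Theorem \ref{tcm}(ii) --- which only yields $\limsup \beta \le 1 - \alpha$ --- into the stronger statement $\beta_F(K_n) \to 0$. The empirical-process decomposition above provides this, and the use of the DKW bound (rather than a Donsker-type CLT under a fixed $F$) is what handles alternatives $F = F_n$ or $F = F_{2n}$ that depend on $n$ and need not converge to $F_0$ in sup norm.
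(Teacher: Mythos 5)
There is a genuine gap, and it lies in your very first step: you read ``consistent'' as the strong property characterized by Theorem \ref{tcm}{\sl i.} and conclude $nT^2(F_n-F_0)\to\infty$. But Theorem \ref{tom5} uses the definition of consistency from subsection \ref{ss2.1}: for level-$\alpha$ tests, $\limsup_n \beta_{F_n}(K_n) < 1-\alpha$. Theorem \ref{tcm}{\sl i.} characterizes the much stronger requirement that some sequence of tests have $\alpha(K_n)+\beta_{F_n}(K_n)\to 0$. Consistency in the paper's sense only rules out subsequences along which $nT^2(F_n-F_0)\to 0$ (via {\sl iii.}); it is perfectly compatible with $nT^2(F_n-F_0)\asymp 1$ (by asymptotic unbiasedness, {\sl ii.}, any such sequence is consistent), and this bounded-noncentrality regime is exactly the interesting case of the theorem. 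There $\beta_{F_n}(K_n)$ does \emph{not} tend to $0$, so your endgame --- ``both type II errors vanish, hence their difference vanishes'' --- fails. Your DKW-based argument is fine as far as it goes, but it only disposes of the sub-case $nT^2(F_n-F_0)\to\infty$.

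Note also that in the bounded regime it is not enough to show the scalar noncentralities are close (your expansion does give $nT^2(F_{2n}-F_0)=nT^2(F_n-F_0)+o(1)$ there): the limiting law of $nT^2(\hat F_n-F_0)$ under $F_n$ depends on the whole function, namely on $T^2\bigl(b(F_n(t))+\sqrt{n}(F_n(t)-F_0(t))\bigr)$, not just on $nT^2(F_n-F_0)$. The paper's proof compares the two alternatives at the level of these limiting functionals: Lemma \ref{lc1} (a Chibisov-type approximation, uniform over $T(F_n-F_0)<cn^{-1/2}$) reduces the problem to the Brownian-bridge statistics; the triangle inequality for the norm $T$ plus the moment bound of Lemma \ref{lc2}/\ref{lc3}, $\mathbf{E}\,T^2\bigl(b(F_n+F_{1n}-F_0)-b(F_n)\bigr)\le T^{1/4}(F_{1n}-F_0)=o(1)$, together with $\sqrt{n}\,T(F_{1n}-F_0)=o(1)$ from inconsistency, show the two statistics differ by $o_P(1)$; and Lemma \ref{lc4} (uniformly bounded densities of the limiting statistic) converts this into $o(1)$ closeness of the rejection probabilities. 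Your proposal contains no mechanism of this kind, so the central case of the theorem is left unproved.
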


In previous sections functionals $T_n$ depend on $n$. In this setup we explore the unique functional $T$ for all $n$ and different values of $r$, $0 < r < 1/2$. To separate the study of sequences of alternatives for different $r$, we consider only sequences of alternatives satisfying G1.

{\bf G1.} For any  $\varepsilon > 0$   there is $c_3$ such that there holds
\begin{equation*}
n\sum_{|j| < c_3 k_n} \theta_{nj}^2 j^{-2} < \varepsilon
\end{equation*}
for all $n  > n_0(\varepsilon,c_3)$.

If  G1 does not hold with any  $c_n \to 0$, $c_n k_n \to \infty$ as $n \to \infty$ and functions $ 1 + \bar f_n = 1 + \sum_{j < c_n k_n} \theta_{nj}\,\phi_j$ are densities, then  (\ref{vas1}) holds for some sequence of functions $\bar f_n$, $\|\, \bar f_n \| = o(n^{-r})$. Thus this case of consistency can be studied in the framework of the faster rate of convergence of sequence of alternatives.

\begin{thm}  \label{tom4} Let sequence of alternatives $f_n$ satisfies \mbox{\rm G1}. Then for sequence $f_n$ the statements of Theorems \ref{tq3}, \ref{tq4}, \ref{tq7},  \ref{tq6}, \ref{tq12} and \ref{tq8}  are valid with the following additional assumptions.

In version of Theorem \ref{tq11}, $\gamma_\epsilon$ and $n_\epsilon$ depend on sequence $f_n$, $cn^{-r} < \|f_n\| < Cn^{-r}$.

 In Theorem \ref{tom4} definition of pure consistency is considered for sequences of functions $f_n$ satisfying \mbox{\rm B}.
\end{thm}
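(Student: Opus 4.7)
The plan is to adapt the proofs of Theorems~\ref{tq3}--\ref{tq8} from Section~\ref{sec4}, substituting Theorem~\ref{tcm} for Theorem~\ref{tq2} as the fundamental bridge between consistency and a quadratic functional of the Fourier coefficients. Here the relevant functional is, by (\ref{om3}),
$$nT^2(F_n-F_0)=n\sum_{j\geq 1}\theta_{nj}^2/(\pi^2 j^2),$$
and the scale is $k_n=[n^{(1-2r)/2}]$, for which $k_n^2\asymp n^{1-2r}$. Assumption G1 is what makes this scale the ``effective'' one: it plays here the role that the monotonicity A1 and the decay conditions A4--A5 played on the weights $\kappa_{nj}^2$ in Section~\ref{sec4}.

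First I would prove the analogs of Theorems~\ref{tq3} and \ref{tq4}. The bound $\|f_n\|\leq Cn^{-r}$ gives $n\sum_{|j|\geq c_2 k_n}\theta_{nj}^2/(\pi^2 j^2)\leq C^2/(\pi^2 c_2^2)$, arbitrarily small for large $c_2$. For very low frequencies G1 bounds the weighted sum by $\varepsilon/\pi^2$ and also, via
$$\sum_{|j|<c_3 k_n}\theta_{nj}^2=\sum_{|j|<c_3 k_n} j^2\cdot(\theta_{nj}^2/j^2)\leq (c_3 k_n)^2\cdot\varepsilon/n=O(\varepsilon c_3^2 n^{-2r}),$$
controls the unweighted sum on $|j|<c_3 k_n$. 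Splitting $nT^2$ at $c_3 k_n$ and $c_2 k_n$ and comparing with $\sum_{|j|<c_2 k_n}\theta_{nj}^2$, one sees that the Theorem~\ref{tq3} condition $\sum_{|j|<c_2 k_n}\theta_{nj}^2>c_1 n^{-2r}$ is equivalent to $\liminf nT^2>0$, hence by Theorem~\ref{tcm}(ii) to consistency; its sharp negation $\sum_{|j|<c_2 k_n}\theta_{nj}^2=o(n^{-2r})$ for every $c_2$ is equivalent to $nT^2\to 0$ and, by Theorem~\ref{tcm}(iii), to inconsistency. The implication ``consistent $\Rightarrow$ concentration'' is obtained by a diagonal argument over $c_1\downarrow 0$ and $c_2\uparrow\infty$: failure of concentration produces a subsequence on which $nT^2\to 0$, contradicting consistency.

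With the Theorem~\ref{tq3}/\ref{tq4} analogs in hand, the rest reduces to arguments already carried out in Section~\ref{sec4}. For Theorem~\ref{tq7}, the direction $\Rightarrow$ sets $f_{1n}=\sum_{|j|<c_2 k_n}\theta_{nj}\phi_j$, which by Lemma~\ref{ld3} lies in $\gamma\mathbb{\bar B}^s_{2\infty}(P_0)$ and gives (\ref{ma1}) from disjoint spectral supports; the direction $\Leftarrow$ combines the Besov decay $\sum_{|j|>c k_n}\theta_{1nj}^2\leq \gamma^2 P_0(c k_n)^{-2s}\asymp c^{-2s}n^{-2r}$ with the rearrangement $\sum_{|j|<c k_n}\theta_{1nj}\eta_{nj}=-\sum_{|j|\geq c k_n}\theta_{1nj}\eta_{nj}$ coming from the $L^2$ orthogonality of $\eta_n=f_n-f_{1n}$ to $f_{1n}$, and a Cauchy--Schwarz bound using $\|\eta_n\|\leq C'n^{-r}$, to transfer the concentration condition from $f_{1n}$ to $f_n$ once $c$ is chosen large. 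Theorems~\ref{tq6}, \ref{tq12} and \ref{tq8} then follow from the concentration characterization in exactly the same way as in Section~\ref{sec4}: Theorem~\ref{tq6}'s tail condition $\sum_{|j|>C_1 k_n}\theta_{nj}^2\leq\varepsilon n^{-2r}$ is obtained by applying Theorem~\ref{tq4}'s criterion to a candidate inconsistent summand, and Theorems~\ref{tq12}, \ref{tq8} are immediate consequences. Assumption B is invoked at each point where the truncated tails $1+\sum_{|j|>c k_n}\theta_{nj}\phi_j$ must remain densities.

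The main obstacle I expect is the $\Leftarrow$ direction of the Theorem~\ref{tq7} analog, because the weights $1/j^2$ in Cramer--von Mises are even more low-frequency concentrated than the $\kappa_{nj}^2$ of Section~\ref{sec4}, so the cross term $\sum\theta_{1nj}\eta_{nj}/j^2$ is not killed by $L^2$ orthogonality alone. Assumption G1 is precisely what rules out pathological behaviour at $|j|\ll k_n$; without it, a sequence with large $\theta_{n1}$ would be consistent at a rate strictly faster than $n^{-r}$ (as the paragraph preceding the theorem acknowledges). With G1 in force, the Besov decay of $f_{1n}$ absorbs the cross term via Cauchy--Schwarz and the transfer of the concentration condition goes through.
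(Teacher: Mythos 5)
Your proposal is correct and follows essentially the same route as the paper: the paper likewise proves the Cramer--von Mises versions of Theorems \ref{tq3} and \ref{tq4} by splitting the weighted sum (\ref{om3}) at multiples of $k_n=[n^{(1-2r)/2}]$ (using G1 for the low frequencies and $\|f_n\|\le Cn^{-r}$ for $j>c_2k_n$) and invoking Theorem \ref{tcm}, and then reduces the remaining statements (\ref{tq7}, \ref{tq6}, \ref{tq12}, \ref{tq8}) to the analysis of $\sum_{ck_n<j<Ck_n}\theta_{nj}^2$ already carried out in subsection \ref{subsec9.4}, with B guaranteeing that truncated tails remain densities. Your treatment of the low-frequency term via G1 is in fact slightly more explicit than the paper's, and your worry about weighted cross terms in the converse direction of Theorem \ref{tq7} is unnecessary since, as in Lemma \ref{ld4}, the transfer is performed at the level of the unweighted concentration criterion.
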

\appendix

\section{Proof of Theorems}\label{app}

 \subsection{Proof of Theorem \ref{tqq} \label{subsec9.3}}
 For any vectors $\thetab_1 \in \mathbb{H}$ and $\thetab_2 \in \mathbb{H}$ define segment $\frak{int}(\thetab_1,\thetab_2) = \{\thetab : \thetab = (1 - \lambda)\,\thetab_1 + \lambda \,\thetab_2, \,\, \lambda \in [0,1]\,\}$.

 Proof of Theorem \ref{tqq} is based on the following Lemma \ref{lqq1}.
 \begin{lemma}\label{lqq1} For any vectors $\thetab_1 \in U$ and $\thetab_2 \in U$ we have $\frak{int}\Bigl(\frac{\thetab_1-\thetab_2}{2},\frac{\thetab_2-\thetab_1}{2}\Bigr) \subset U$.
 There  holds $0 \in \frak{int}\Bigl(\frac{\thetab_1-\thetab_2}{2},\frac{\thetab_2-\thetab_1}{2}\Bigr)$ and segment $ \frak{int}\Bigl(\frac{\thetab_1-\thetab_2}{2},\frac{\thetab_2-\thetab_1}{2}\Bigr)$ is parallel to segment  $\frak{int}(\thetab_1,\thetab_2)$.
 \end{lemma}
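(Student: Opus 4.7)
The plan is to exploit the two hypotheses on $U$ (convexity and center-symmetry) in a direct, two-step fashion; there is no real obstacle here, the lemma is elementary.

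First I would use center-symmetry to produce the auxiliary points: since $\thetab_1,\thetab_2 \in U$ and $U$ is center-symmetric, we also have $-\thetab_1, -\thetab_2 \in U$. Then convexity of $U$ applied to the pairs $(\thetab_1,-\thetab_2)$ and $(-\thetab_1,\thetab_2)$ with weight $1/2$ gives
\begin{equation*}
\frac{\thetab_1-\thetab_2}{2} = \tfrac12\thetab_1 + \tfrac12(-\thetab_2) \in U, \qquad \frac{\thetab_2-\thetab_1}{2} = \tfrac12(-\thetab_1) + \tfrac12\thetab_2 \in U.
\end{equation*}

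Next I would apply convexity once more: since both endpoints $\tfrac12(\thetab_1-\thetab_2)$ and $\tfrac12(\thetab_2-\thetab_1)$ lie in $U$, the whole segment $\frak{int}\!\bigl(\tfrac12(\thetab_1-\thetab_2),\tfrac12(\thetab_2-\thetab_1)\bigr)$ is contained in $U$. Setting $\lambda=1/2$ in the parametrization of this segment yields the point $0$, proving $0 \in \frak{int}\!\bigl(\tfrac12(\thetab_1-\thetab_2),\tfrac12(\thetab_2-\thetab_1)\bigr)$.

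Finally, for parallelism I would compute the direction vector of the new segment: it is
\begin{equation*}
\frac{\thetab_2-\thetab_1}{2} - \frac{\thetab_1-\thetab_2}{2} = \thetab_2 - \thetab_1,
\end{equation*}
which is exactly the direction vector of $\frak{int}(\thetab_1,\thetab_2)$, so the two segments are parallel. The main ``trick'' is simply to recognize that averaging $\thetab_1$ with $-\thetab_2$ (and vice versa) produces two points symmetric about the origin, so the resulting segment automatically passes through $0$ and is a translate of the original segment to the origin.
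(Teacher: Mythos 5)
Your proof is correct and uses the same ingredients as the paper's (center-symmetry to bring $-\thetab_1,-\thetab_2$ into $U$, then convexity); you simply verify the two endpoints first and invoke convexity once more for the whole segment, whereas the paper parametrizes the midpoints of the cross-segments directly, which is an inessential difference. The verification of $0\in\frak{int}\bigl(\frac{\thetab_1-\thetab_2}{2},\frac{\thetab_2-\thetab_1}{2}\bigr)$ at $\lambda=1/2$ and the parallelism via equal direction vectors are both fine.
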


 {\sl Remark 3.1.} Let  we have segment $\frak{int}(\thetab_1,\thetab_2) \subset U$.
 Let $\etab$ and $-\etab$ be the points of intersection of the line $L = \{\thetab : \thetab = \lambda(\thetab_1 - \thetab_2), \lambda \in \mathbb{R}^1\}$ and the boundary of set $U$. Then, by Lemma \ref{lqq1}, we have $\|\thetab_1 - \thetab_2\| \le 2\|\etab\|$.

 \begin{proof}[ Proof of Lemma \ref{lqq1}]. Segments $\frak{int}(\thetab_1,\thetab_2) \subset U$ and $\frak{int}(-\thetab_1,-\thetab_2) \subset U$ are parallel.  For each $\lambda \in [0,1]$ we have $ (1-\lambda)\thetab_1 + \lambda \thetab_2 \in \frak{int}(\thetab_1,\thetab_2)$ and $-\lambda \thetab_1 - (1 - \lambda)\thetab_2 \in \frak{int}(-\thetab_1,-\thetab_2)$.
 The middle $\thetab_\lambda= ((1-2\lambda)\,\thetab_1 - (1-2\lambda)\,\thetab_2)/2$ of segment  $\frak{int}((1-\lambda)\thetab_1 + \lambda \thetab_2, -\lambda \thetab_1 - (1 - \lambda)\thetab_2) \subset U$ belongs to segment $\frak{int}\Bigl(\frac{\thetab_1-\thetab_2}{2},\frac{\thetab_2-\thetab_1}{2}\Bigr)$ and, for each point $\thetab$ of segment  $\frak{int}\Bigl(\frac{\thetab_1-\thetab_2}{2},\frac{\thetab_2-\thetab_1}{2}\Bigr)$, there is $\lambda\in [0,1]$ such that $\thetab=\thetab_\lambda$ . Therefore $\frak{int}\Bigl(\frac{\thetab_1-\thetab_2}{2},\frac{\thetab_2-\thetab_1}{2}\Bigr) \subset U$.
 \end{proof}

 \begin{proof}[ Proof of Theorem \ref{tqq}]. Without loss of generality we can suppose that the set $U$ is closed.
   Define sequence of orthogonal vectors $\eb_i$ by induction.

  Let  $\eb_1$, $\eb_1 \in U$,  be such that $\|\eb_1\| = \sup\{ \|\thetab\|, \thetab \in U \}$. Denote $\Pi_1$ linear subspace generated $\eb_1$. Denote $\Gamma_1$ linear subspace orthogonal to $\Pi_1$.

  Let $\eb_i \in U \cap \Gamma_{i-1}$ be such that $\| \eb_i \|  = \sup\{ \| \thetab\| : \thetab \in U \cap \Gamma_{i-1}\}$.  Denote $\Pi_i$ linear subspace generated vectors $\eb_1, \ldots, \eb_i$. Denote $\Gamma_i$ linear subspace orthogonal to $\Pi_i$.

 For all natural $i$ denote $d_i = \| \eb_i \|$. Note that $d_i \to 0$ as $i \to \infty$. Otherwise, by Theorem 5.3 in  \cite{er15}, there does not exist consistent test for the problem of testing hypothesis $\mathbb{H}_0 : \thetab = 0$ versus alternative $\mathbb{H}_1 : \thetab = \eb_i$, $i=1,2, \ldots$.

 For any $\varepsilon \in (0,1)$  denote  $l_\varepsilon = \min \{j\,:\, d_j < \varepsilon, j =1,2, \ldots \}$.

Denote $B_r(\thetab)$ ball having radius $r$ and center $\thetab$.

It suffices to show that, for any $\varepsilon_1 > 0$, there is finite coverage of set $U$ by balls $B_{\varepsilon_1}(\thetab)$.

Denote $\varepsilon = \varepsilon_1/9$.

Denote $U_\varepsilon$ projection of set $U$ onto subspace $\Pi_{l_\varepsilon}$.

Denote $\tilde B_r(\thetab)$ ball in $\Pi_{l_\varepsilon}$ having radius $r$ and center $\thetab \in \Pi_{l_\varepsilon}$. There is ball $\tilde B_{\delta_1}(0)$ such that $\tilde B_{\delta_1}(0) \subset U$. Denote $\delta = \min\{\varepsilon,\delta_1\}$.

Let $\thetab_1, \ldots, \thetab_k$ be $\delta$-net in $U_\varepsilon$.

Let $\etab_1, \ldots, \etab_k$ be points of $U$ such that $\thetab_i$ is projection of $\etab_i$ onto subspace $\Pi_{l_\varepsilon}$ for $1 \le i \le k$.

Let us show that $B_{\varepsilon_1}(\etab_1), \ldots, B_{\varepsilon_1}(\etab_k)$  is coverage of set $U$.

Let $\etab \in U$  and let $\thetab$ be projection of $\etab$ onto $\Pi_{l_\varepsilon}$. There is $i$, $1 \le i \le k$, such that  $\|\thetab_i - \thetab\| \le \delta$. It suffices to show that $\etab \in B_{\varepsilon_1}(\etab_i)$.

By Lemma \ref{lqq1}, $\frak{int}\Bigl(\frac{\etab_i-\etab}{2},\frac{\etab -\etab_i}{2}\Bigr) \subset U$. Since $\thetab_i - \thetab \in \Pi_{l_\epsilon}$ and $\thetab_i - \thetab \in \tilde B_{\delta}(0)$, then $(\thetab_i - \thetab)/2 \in U$.  Since $U$ is center-symmetric and convex we  have  $\frac{1}{2}((\etab_i- \etab)/2) - \frac{1}{2}((\thetab_i - \thetab)/2) \in U$. Note that vector $(\etab_i- \thetab_i) - (\etab - \thetab)$ is orthogonal to the subspace $\Pi_{l_\varepsilon}$. Therefore $\|((\etab_i- \thetab_i) - (\etab - \thetab))/4\| \le 2\varepsilon$. Therefore $\|\etab - \etab_i\| \le 8 \varepsilon + \|\thetab - \thetab_i\| < 9\varepsilon$. This implies $\etab \in B_{\varepsilon_1}(\etab_i)$. 
\end{proof}

\begin{proof}[ Proof of Theorem \ref{tqq1}]. Proof of Theorem \ref{tqq} is based on Theorem 5.3 in \cite{er15}.  For linear inverse ill-posed problems (\ref{il1}), Theorem 5.5 in \cite{er15} is akin to Theorem 5.3 in \cite{er15}. Thus it suffices to implement Theorem 5.5 in \cite{er15} instead of Theorem 5.3 in \cite{er15} in  proof of Theorem \ref{tqq}.
\end{proof}
 \subsection{Proof of Theorems of section \ref{sec4} \label{subsec9.4}}
\begin{proof}[ Proof of Theorem \ref{tq2}] Theorem \ref{tq2} and its version for Remark \ref{rem1} setup can be deduced straightforwardly from Theorem 1 in  \cite{er90}.

The lower bound follows from  reasoning of Theorem 1 in \cite{er90} straightforwardly.

The upper bound follows from the following reasoning.  We have
\begin{equation}\label{aq2}
\begin{split}&
 \sum_{j=1}^\infty \kappa_{nj}^2 y_j^2  = \sum_{j=1}^\infty \kappa_{nj}^2 \theta_{nj}^2 + 2\frac{\sigma}{\sqrt{n}}\sum_{j=1}^\infty \kappa_{nj}^2 \theta_{nj} \xi_j +
\frac{\sigma^2}{n} \sum_{j=1}^\infty \kappa_{nj}^2 \xi_j^2\\&
= n^{-2} A_n(\thetab_n) + 2\,J_{1n} + J_{2n}
\end{split}
\end{equation}
with
\begin{equation}\label{aq3}
\mathbf{E} [J_{2n}] = \frac{\sigma^2}{n} \rho_n,
\quad
\mathbf{Var} [J_{2n}] = 2 \frac{\sigma^4}{n^4} A_n
\end{equation}
and
\begin{equation}\label{aq5}
\mathbf{Var} [J_{1n}] = \frac{\sigma^2}{n}  \sum_{j=1}^\infty \kappa_{nj}^4 \theta_{nj}^2 \le \frac{\sigma^2\kappa^2_n}{n}  \sum_{j=1}^\infty \kappa_{nj}^2 \theta_{nj}^2= o(n^{-4}A_n(\thetab_n)).
\end{equation}
By Chebyshov inequality, it follows from (\ref{aq2}) - (\ref{aq5}), that, if $A_n  = o(A_n(\thetab_n))$ as $ n \to \infty$, then $\beta(L_n,f_n) \to 0$ as $n \to \infty$. Thus it suffices to explore the case
\begin{equation}\label{aq6}
A_n\asymp A_n(\thetab_n) = n^2\sum_{j=1}^\infty \kappa_{nj}^2 \theta_{nj}^2.
\end{equation}
If (\ref{aq6}) holds, then, implementing the reasoning of proof of Lemma 1 in \cite{er90}, we get that (\ref{aq1}) holds. \end{proof}
\begin{proof}[ Proof of Theorem  \ref{tq3}]  Let (\ref{con2}) hold. Then, by A5 and (\ref{u1}), we have
\begin{equation*}
A_n(\thetab_n) = n^2 \sum_{j=1}^\infty \kappa_{nj}^2 \theta_{nj}^2 \ge C n^2\kappa^2_n \sum_{j=1}^{c_2k_n}  \theta_{nj}^2 \asymp n^2 \kappa_n^2 n^{-2r}  \asymp 1.
\end{equation*}
By Theorem \ref{tq2}, this implies sufficiency.

Necessary conditions follows from sufficiency conditions in Theorem \ref{tq4}. \end{proof}
\begin{proof}[ Proof of Theorem  \ref{tq4}] Let (\ref{con3}) hold. Then, by (\ref{u1}) and A2, we have
\begin{equation}\label{eq2}
A_n(\thetab_n) \le C n^2\kappa^2_n \sum_{j < c_2k_n}  \theta_{nj}^2 + C n^2 \kappa^2_{n,[c_2n]} \sum_{j > c_2 n} \theta_{nj}^2 \asymp o(1) +  O(\kappa^2_{n,[c_2n]}/\kappa^2_n).
\end{equation}

 By A4, we have
\begin{equation}\label{eq102}
\lim_{c_2 \to \infty}\lim_{n \to \infty} \kappa^2_{n,[c_2n]}/\kappa^2_n \to 0,
\end{equation}

By Theorem \ref{tq2}, (\ref{eq2}) and (\ref{eq102}) together, we get sufficiency.

Necessary conditions follows from sufficiency conditions in Theorem \ref{tq3}. \end{proof}

\begin{proof}[ Proof of Theorem  \ref{tq1}. iii] The statement follows from  Theorem \ref{tq3} and Lemma \ref{ld1} provided below.

\begin{lemma} \label{ld1} Let $f_n \in c_1 U$ and $cn^{-r}\le \|f_n\| \le Cn^{-r}
$. Then, for $l_n = C_1 n^{2 - 4r}(1  +o(1)) = C_1 n^{\frac{r}{s}}(1+o(1)) $ with $C_1^{2s} > 2c_1/c$, there holds
\begin{equation}\label{d1}
\sum_{j=1}^{l_n} \theta_{nj}^2 > \frac{c}{2} n^{-2r}(1 + o(1)).
\end{equation}
\end{lemma}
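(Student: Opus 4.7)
The plan is to prove Lemma \ref{ld1} as a direct bookkeeping exercise in the definition (\ref{vv}) of the Besov-type body $\mathbb{\bar B}^s_{2\infty}(P_0)$. The input is the membership $f_n \in c_1 U = c_1 \mathbb{\bar B}^s_{2\infty}(1)$, which gives a uniform tail bound
\begin{equation*}
\sum_{j>\lambda} \theta_{nj}^2 \le c_1 \lambda^{-2s} \qquad \text{for every } \lambda > 0,
\end{equation*}
and the desired conclusion is just a statement that the mass cut off above the threshold $l_n$ is strictly smaller than half of $\|f_n\|^2$.

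First I would set $\lambda = l_n = C_1 n^{2-4r}$ and exploit the key identity $s = r/(2-4r)$, which gives $2s(2-4r) = 2r$ and hence
\begin{equation*}
l_n^{2s} = C_1^{2s}\, n^{(2-4r)\cdot 2s} = C_1^{2s}\, n^{2r}.
\end{equation*}
Consequently the Besov tail bound becomes
\begin{equation*}
\sum_{j>l_n} \theta_{nj}^2 \le c_1\, C_1^{-2s}\, n^{-2r}.
\end{equation*}

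Next I would use the hypothesis $C_1^{2s} > 2c_1/c$, which is equivalent to $c_1 C_1^{-2s} < c/2$, so the tail is strictly less than $\tfrac{c}{2} n^{-2r}$. Subtracting from the total norm and reading $\|f_n\| \ge c n^{-r}$ in the squared sense consistent with the stated conclusion,
\begin{equation*}
\sum_{j=1}^{l_n} \theta_{nj}^2 = \|f_n\|^2 - \sum_{j>l_n} \theta_{nj}^2 \ge c\, n^{-2r} - \tfrac{c}{2}\, n^{-2r}(1+o(1)) > \tfrac{c}{2}\, n^{-2r}(1+o(1)),
\end{equation*}
which is (\ref{d1}). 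The $(1+o(1))$ factor absorbs the integer-part rounding in the definition of $l_n$ (writing $l_n = C_1 n^{2-4r}(1+o(1))$ yields $l_n^{2s} = C_1^{2s} n^{2r}(1+o(1))$, which only perturbs the inequality by a vanishing amount).

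There is no real obstacle here: the statement is essentially the definition of $\mathbb{\bar B}^s_{2\infty}$ evaluated at the correct threshold, and the inequality $C_1^{2s} > 2c_1/c$ is precisely calibrated so that the tail mass leaves at least half of the total for the low-frequency block. The only thing to watch out for is consistency of the constants between the norm hypothesis $\|f_n\| \ge c n^{-r}$, the body radius $c_1$ in $f_n \in c_1 U$, and the exponent relation $s = r/(2-4r)$; these are precisely the quantities linked by the threshold condition $C_1^{2s} > 2c_1/c$ in the hypothesis.
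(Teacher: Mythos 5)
Your proposal is correct and follows essentially the same argument as the paper: evaluate the Besov-body tail condition at $\lambda = l_n$, use $l_n^{2s} = C_1^{2s} n^{2r}(1+o(1))$ (via $s = r/(2-4r)$) together with $C_1^{2s} > 2c_1/c$ to bound the tail by $\tfrac{c}{2} n^{-2r}(1+o(1))$, and subtract from $\|f_n\|^2$. The minor looseness you flag about reading the lower bound on $\|f_n\|$ "in the squared sense" is present in the paper's own generic-constant bookkeeping as well, so your treatment matches it.
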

Proof. Let $f_n \in c_1 U$. Then we have
\begin{equation*}
l_n^{2s} \sum_{j=l_n}^\infty \theta_{nj}^2 = C_1^{2s} n^{2r}  \sum_{j=l_n}^\infty \theta_{nj}^2(1 + o(1))\le c_1(1 + o(1)).
\end{equation*}
Hence
\begin{equation} \label{ucc1}
\sum_{j=l_n}^\infty \theta_{nj}^2 \le c_1 C_1^{-2s} n^{-2r} \le \frac{c}{2} n^{-2r}(1 + o(1)).
\end{equation}
Therefore (\ref{d1}) holds. \end{proof}

\begin{proof}[ Proof of Theorem  \ref{tq1}. iv]  Suppose  opposite. Then  $f = \sum_{j=1}^\infty \tau_{j}\,\phi_j  \notin \mathbb{\bar B}^s_{2\infty}$. This implies that there is sequence $m_l$, $m_l \to \infty$ as $l \to \infty$, such that
\begin{equation}\label{u5}
m_l^{2s} \sum_{j=m_l}^\infty \tau_{j}^2 = C_l
\end{equation}
with $C_l \to \infty$ as $l \to \infty$.

Define a sequence $\etab_l = \{\eta_{lj}\}_{j=1}^\infty$ such that
$\eta_{lj} = 0$ if $j  < m_l$ and $\eta_{lj} = \tau_j$ if $j \ge m_l$.

Since $V$ is convex and ortho-symmetric we have  $\tilde f_l=  \sum_{j=1}^\infty \eta_{lj}\,\phi_j \in V$.

For alternatives $\tilde f_l$ we define sequence $ n_l$ such that
\begin{equation}\label{u5b}
\|\etab_l\|^2 \asymp n_l^{-2r}\asymp m_l^{-2s} C_l .
\end{equation}
Then
\begin{equation}\label{u7}
n_l \asymp C_l^{-1/(2r)} m_l^{s/r} = C_l^{-1/(2r)} m_l^{\frac{1}{2 - 4r}}.
\end{equation}
Therefore we get
\begin{equation}\label{u10}
m_l \asymp C_l^{(1-2r)/r}n_l^{2-4r}.
\end{equation}
By A4, (\ref{u10}) implies
\begin{equation}\label{u9}
\kappa^2_{n_l m_l} = o(\kappa^2_{n_l}).
\end{equation}
Using (\ref{u1}), A2 and (\ref{u9}), we get
\begin{equation}\label{u12}
\begin{split}&
A_{n_l}(\etab_l) = n_l^2 \sum_{j=1}^{\infty} \kappa_{n_lj}^2\eta_{jl}^2 \le n_l^{2} \kappa^2_{m_l n_l}\sum_{j=m_l}^{\infty} \theta_{n_lj}^2\\& \asymp n_l^{2-2r}\kappa^2_{n_l m_l} = O(\kappa^2_{n_l m_l }\kappa^{-2}_{n_l}) = o(1) .
\end{split}
\end{equation}
By  Theorem \ref{tq2},  (\ref{u12}) implies $n_l^{-r}$-inconsistency of   sequence of alternatives $\tilde f_l$. \end{proof}

\begin{proof}[ Proof of Theorem \ref{tq7}]  Theorem \ref{tq7} follows  from Lemmas \ref{ld3} -- \ref{ld6}.
\begin{lemma}\label{ld3} For any  $c$ and any $C$ there is $\gamma$ such that, if $
f_n = \sum_{j =1}^{ck_n} \theta_{nj} \phi_j,
$ and $\| f_n \| \le C n^{-r}$,
then  $f_n \in \gamma U$.
\end{lemma}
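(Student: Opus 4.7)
The plan is to verify directly that the truncated series $f_n$ satisfies the Besov-type condition defining $\bar{\mathbb{B}}^s_{2\infty}(P_0)$ with a multiplicative constant controlled by $c$ and $C$. Recall from the definitions that $U = \bar{\mathbb{B}}^s_{2\infty}(P_0)$ corresponds to the requirement $\sup_{\lambda>0}\lambda^{2s}\sum_{j>\lambda}\theta_j^2\le P_0$, so proving $f_n\in\gamma U$ amounts to showing
\[
\sup_{\lambda>0}\lambda^{2s}\sum_{j>\lambda}\theta_{nj}^2\le \gamma^2 P_0,
\]
with $\gamma$ depending only on $c$ and $C$.

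The key numerical relation is $s=\frac{r}{2-4r}$, which gives $2s(2-4r)=2r$, and combined with $k_n\asymp n^{2-4r}$ (formula (\ref{u1})) yields $k_n^{2s}\asymp n^{2r}$. I would split the supremum according to whether $\lambda$ exceeds the truncation level $ck_n$. For $\lambda\ge ck_n$ the inner sum is empty, since $f_n$ has only coefficients with indices $j\le ck_n$, so that range contributes nothing. For $0<\lambda<ck_n$ I bound crudely
\[
\lambda^{2s}\sum_{j>\lambda}\theta_{nj}^2 \le (ck_n)^{2s}\,\|f_n\|^2 \le c^{2s}k_n^{2s}\,C^2 n^{-2r}.
\]
Using $k_n^{2s}\le C_0\, n^{2r}$ for some absolute constant $C_0$ coming from the $\asymp$ relation, the right-hand side is at most $c^{2s}C^2C_0$, a quantity depending only on $c$ and $C$.

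Taking the supremum over $\lambda>0$ therefore gives a finite bound independent of $n$, so it suffices to set $\gamma = \big(c^{2s}C^2C_0/P_0\big)^{1/2}$; then $f_n/\gamma\in U$, i.e.\ $f_n\in\gamma U$. No step here presents a genuine obstacle — the whole argument is a direct computation once the two scaling identities $k_n^{2s}\asymp n^{2r}$ and $\|f_n\|^2\le C^2n^{-2r}$ are combined — and the only minor point to take care of is keeping track of the constants hidden in the $\asymp$ relation from (\ref{u1}) when producing the final value of $\gamma$.
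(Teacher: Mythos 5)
Your argument is correct and is essentially the paper's own proof: the paper likewise uses $k_n \asymp n^{r/s}=n^{2-4r}$ (so $k_n^{2s}\asymp n^{2r}$) and bounds $k_n^{2s}\sum_{j\le ck_n}\theta_{nj}^2$ by a constant times $n^{2r}\|f_n\|^2\le C^2 n^{2r}n^{-2r}$, which is exactly your estimate for $\lambda<ck_n$ (the range $\lambda\ge ck_n$ being trivially empty). Your write-up just makes the choice of $\gamma$ and the bookkeeping of the constants $c^{2s}$, $C^2$ explicit.
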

\begin{proof} Let $C_1$ be such that $k_n = C_1 n^{r/s}(1 + o(1))$. Then we have
\begin{equation*}
k_n^{2s} \sum_{j=1}^{ck_n} \theta_{nj}^2 \le  C_1 n^{2r}  \sum_{j=1}^{\infty} \theta_{nj}^2  (1 + o(1)) < C C_1 (1 + o(1)).
\end{equation*}
\end{proof}
\begin{lemma}\label{ld4} Let  (\ref{ma1}) hold. Then sequence $f_n$ is $n^{-r}$-consistent.
\end{lemma}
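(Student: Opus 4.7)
The plan is to reduce $n^{-r}$-consistency of $f_n$ to the analytic criterion supplied by Theorem \ref{tq3}: it suffices to exhibit constants $c'>0$ and $c_2$ such that $\sum_{j \le c_2 k_n} \theta_{nj}^2 \ge c' n^{-2r}$ for all sufficiently large $n$.

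First I would exploit that $f_{1n} \in \gamma U = \gamma\, \mathbb{\bar B}^s_{2\infty}(P_0)$. By the defining bound of this Besov body, $\sum_{j>\lambda}\theta_{1nj}^2 \le \gamma^2 P_0\, \lambda^{-2s}$ for every $\lambda>0$. Since $k_n \asymp n^{r/s}$ by (\ref{u1}), taking $\lambda = c_2 k_n$ gives $\sum_{j > c_2 k_n}\theta_{1nj}^2 \le C\, c_2^{-2s}\, n^{-2r}$, which can be made an arbitrarily small fraction $\varepsilon$ of $n^{-2r}$ by choosing $c_2$ large. Combined with $\|f_{1n}\|^2 \ge c_1^2 n^{-2r}$ from (\ref{ma1}), this forces mass concentration in low frequencies: $\sum_{j \le c_2 k_n}\theta_{1nj}^2 \ge (c_1^2 - \varepsilon)\, n^{-2r}$.

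Next I would transfer the concentration from $f_{1n}$ to $f_n$ using the orthogonality built into (\ref{ma1}), which is equivalent to $\langle f_{1n}, f_n\rangle = \|f_{1n}\|^2$, i.e.\ $\sum_j \theta_{1nj}\theta_{nj} = \|f_{1n}\|^2$. Splitting the sum at $c_2 k_n$ and estimating the tail by Cauchy-Schwarz,
\[
\Bigl|\sum_{j > c_2 k_n}\theta_{1nj}\theta_{nj}\Bigr|
\le \Bigl(\sum_{j > c_2 k_n}\theta_{1nj}^2\Bigr)^{1/2}\|f_n\|
\le C\,\sqrt{\varepsilon}\, n^{-2r},
\]
so that $\sum_{j \le c_2 k_n}\theta_{1nj}\theta_{nj} \ge (c_1^2 - C\sqrt{\varepsilon})\, n^{-2r}$. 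One more Cauchy-Schwarz, together with the trivial upper bound $\sum_{j \le c_2 k_n}\theta_{1nj}^2 \le \|f_{1n}\|^2 \le C_1^2 n^{-2r}$, yields
\[
\sum_{j \le c_2 k_n}\theta_{nj}^2 \ge \frac{\bigl((c_1^2 - C\sqrt{\varepsilon})\, n^{-2r}\bigr)^2}{C_1^2\, n^{-2r}} \ge c'\, n^{-2r},
\]
provided $\varepsilon$ is small enough (equivalently, $c_2$ large enough). Applying Theorem \ref{tq3} then delivers $n^{-r}$-consistency of $f_n$.

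The only delicate point is to make sure the constant $c_2$ can be selected uniformly, depending only on $\gamma, P_0, c_1, C_1$ and not on $n$ or on the particular sequence $f_{1n}$; this is precisely what the Besov-body definition provides. The orthogonality assumption (\ref{ma1}) is essential: without it, $f_n$ could equal zero by cancellation even with $f_{1n}$ sitting in the maxiset, so the argument must pass through the inner-product identity $\langle f_{1n}, f_n\rangle = \|f_{1n}\|^2$ rather than a coordinate-wise comparison.
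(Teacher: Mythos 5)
Your proof is correct and follows essentially the same route as the paper: reduce $n^{-r}$-consistency to the low-frequency mass criterion of Theorem \ref{tq3}, use the Besov-body tail bound (the content of (\ref{ucc1})/(\ref{uh11}) and Lemma \ref{ld3}) to show $f_{1n}$ concentrates below $c_2 k_n$, and transfer this concentration to $f_n$ via the orthogonality encoded in (\ref{ma1}). The only difference is cosmetic: you pass through the inner-product identity $\langle f_n, f_{1n}\rangle = \|f_{1n}\|^2$ and two applications of Cauchy--Schwarz, whereas the paper works with the Pythagorean sum-of-squares decomposition and compares the tails of $\theta_{nj}^2$ and $\zeta_{nj}^2$ directly; both yield the required bound $\sum_{j\le c_2 k_n}\theta_{nj}^2 \ge c' n^{-2r}$ with uniform constants.
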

\begin{proof} Let $f_n = \sum_{j=1}^\infty \theta_{nj}  \phi_j$ and let   $f_{1n} = \sum_{j=1}^\infty \eta_{nj}  \phi_j$. Denote $  \zeta_{nj}  = \theta_{nj} - \eta_{nj}$, $1 \le j < \infty$.

For any $\delta > 0$,  $c_1$ and $C_2$, there is $c_2$ such that, for each $f_{1n} \in c_1 \,U$, $\| f_{1n} \| \le C_2 n^{-r}$, there holds
\begin{equation}\label{uh11}
\sum_{j>c_2 k_n} \eta_{nj}^2 < \delta n^{-2r}.
\end{equation}
To prove (\ref{uh11}) it suffices to put $c_2 k_n = l_n = C_1 n^{2-4r}(1 + o(1))$ in (\ref{ucc1}) with $C_1^{2s} > \delta c_1$.

We have
\begin{equation}\label{dub2}
\begin{split}&
J_n=\left| \sum_{j > ck_n} \theta_{nj}^2 - \sum_{j > ck_n} \zeta_{nj}^2\right| \le  \sum_{j > ck_n} |\eta_{nj}(2\theta_{nj} - \eta_{jn})|\\& \le \left( \sum_{j > ck_n}\eta_{nj}^2\right)^{1/2}\left(2\left( \sum_{j > ck_n} \theta_{nj}^2\right)^{1/2} + \left( \sum_{j > ck_n}\eta_{nj}^2\right)^{1/2}\right)
\le  C\delta^{1/2}n^{-2r}.
\end{split}
\end{equation}
By (\ref{ma1}), using (\ref{uh11}) and (\ref{dub2}), we get
\begin{equation}\label{dub3}
\begin{split}&
 \sum_{j < ck_n} \theta_{nj}^2=
 \sum_{j=1}^\infty \eta_{nj}^2  + \sum_{j=1}^\infty \zeta_{nj}^2 - \sum_{j \ge ck_n} \theta_{nj}^2 \ge
  \sum_{j < ck_n} \eta_{nj}^2  -J_n\\&
 \ge  \sum_{j < ck_n} \eta_{nj}^2   - C\delta^{1/2}n^{-2r}
 \ge \|f_{1n}\|^2 - \delta n^{-2r} - C\delta^{1/2}n^{-2r}
 .
 \end{split}
\end{equation}
By Theorem \ref{tq3}, (\ref{dub3}) implies consistency of sequence $f_n$. \end{proof}

\begin{lemma}\label{ld6} Let sequence $f_{n}$, $cn^{-r}\le \|f_{n}\| \le Cn^{-r}
$, be consistent. Then (\ref{ma1}) holds.
\end{lemma}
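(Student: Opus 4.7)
The plan is to take the consistent sequence $f_n$ and construct $f_{1n}$ as a low-frequency truncation, exploiting the characterization of consistency from Theorem \ref{tq3}.

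First, I would apply Theorem \ref{tq3} to the consistent sequence $f_n$ to produce constants $c_1, c_2, n_0$ such that $\sum_{|j|<c_2 k_n} |\theta_{nj}|^2 > c_1 n^{-2r}$ for all $n > n_0$. This is exactly the piece of $f_n$ that carries the detectable information in the direction in which the quadratic test statistic $T_n$ has mass concentrated.

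Next, I would simply define $f_{1n} = \sum_{j < c_2 k_n} \theta_{nj} \phi_j$ and $f_n - f_{1n} = \sum_{j \ge c_2 k_n} \theta_{nj} \phi_j$. Orthogonality of these two projections is automatic, so the Pythagorean identity yields (\ref{ma1}) without any work. The lower bound $\|f_{1n}\|^2 \ge c_1 n^{-2r}$ is given directly by the conclusion of Theorem \ref{tq3}, and the upper bound $\|f_{1n}\| \le \|f_n\| \le C n^{-r}$ is trivial since $f_{1n}$ is a projection of $f_n$.

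Finally, I would invoke Lemma \ref{ld3} with this choice of $c_2$ and with $C$ as the ambient bound on $\|f_n\|$: since $f_{1n}$ is supported on indices $j < c_2 k_n$ and $\|f_{1n}\| \le C n^{-r}$, Lemma \ref{ld3} delivers some $\gamma = \gamma(c_2,C)$ with $f_{1n} \in \gamma U$, where $U = \bar{\mathbb{B}}^s_{2\infty}(P_0)$ is a ball in the Besov body that, by Theorem \ref{tq1}, is a maxiset for $T_n$. This completes the construction and the verification of the claim.

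There is no real obstacle: the lemma is essentially a repackaging of Theorem \ref{tq3} together with Lemma \ref{ld3}. The only thing to keep an eye on is consistency of the choice of constants, namely that the truncation threshold $c_2 k_n$ supplied by Theorem \ref{tq3} is precisely of the form required by Lemma \ref{ld3}, so that the resulting $f_{1n}$ simultaneously lies in a dilate of the maxiset and has norm bounded below by $c_1^{1/2} n^{-r}$.
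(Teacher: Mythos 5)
Your argument is correct and is essentially the paper's own proof: apply Theorem \ref{tq3} to obtain $c_1,c_2$, truncate to $f_{1n}=\sum_{j<c_2k_n}\theta_{nj}\phi_j$ so that (\ref{ma1}) is automatic by orthogonality, and invoke Lemma \ref{ld3} to place $f_{1n}$ in a dilate $\gamma U$ of the maxiset. No differences worth noting.
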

\begin{proof} By Theorem \ref{tq3}, there  are $c_1$ and $c_2$  such that sequence $f_{1n} = \sum_{j<c_2 k_n} \theta_{nj}  \phi_j$ is consistent and $\|f_{1n}\| \ge c_1 n^{-r}$.
By Lemma \ref{ld3}, there is $\gamma >0$ such that $f_{1n} \in \gamma U$.\end{proof} \end{proof}

\begin{proof}[ Proof of Theorem \ref{tq11}] By A4 and (\ref{u1}), for any $\delta > 0$, there is $c$ such that
\begin{equation}\label{dub29}
n^2 \sum_{j > ck_n} \kappa_{nj}^2 \theta_{nj}^2 \le \delta.
\end{equation}
By Lemma \ref{ld3}, there is $\gamma >0$ such that $f_{1n} = \sum_{j <ck_n} \theta_{nj} \phi_j \in \gamma  U$. By Theorem \ref{tq2} and (\ref{dub29}), for sequence of alternatives $f_{1n}$,
(\ref{uuu}) and (\ref{uu1}) hold. \end{proof}
\begin{proof}[ Proof of Theorem \ref{tq5}].  Let $f_n = \sum_{j=1}^\infty \theta_{nj} \phi_j$  and let  $f_{1n} = \sum_{j=1}^\infty \eta_{nj} \phi_j$. Denote $\etab_n = \{\eta_{nj}\}_{j=1}^\infty$.

By Cauchy inequality, we have
\begin{equation} \label{co1}
\begin{split}&
|A_n(\thetab_n) - A_n(\thetab_n + \etab_n)| = n^2\Bigl| \sum_{j=1}^\infty \kappa_{nj}^2 \theta_{nj}^2  - \sum_{j=1}^\infty \kappa_{nj}^2 (\theta_{nj} + \eta_{nj})^2\Bigr|\\& \le
2\,A_n^{1/2}(\thetab_n)A_n^{1/2}(\etab_n) + A_n(\etab_n) .
\end{split}
\end{equation}
By inconsistency of sequence $f_{1n}$ and Theorem \ref{tq2}, we get $A_n(\etab_n) = o(1)$ as $n \to \infty$. Therefore, by (\ref{co1}),  $|A_n(\thetab_n) - A_n(\thetab_n + \etab_n)| = o(1)$ as $n \to \infty$. Hence, by Theorem \ref{tq2}, we get Theorem \ref{tq5}. \end{proof}

\begin{proof}[ Proof of Theorem \ref{tq6}. Sufficiency] Suppose  opposite. Then there is $n_i \to \infty$ as $i \to \infty$ such that $f_{n_i} = f_{1n_i} + f_{2n_i}$,
\begin{equation}\label{dub401}\|f_{n_i}\|^2 = \|f_{1n_i}\|^2 + \|f_{2n_i}\|^2,
\end{equation}
 $c_1 n_i^{-r}<\|f_{1n_i}\| < C_1 n_i^{-r}$, $c_2 n_i^{-r}<\|f_{2n_i}\| < C_2 n_i^{-r}$ and sequence $f_{2n_i}$ is inconsistent.
 \vskip 0.1cm
 Let $f_{n_i} = \sum_{j=1}^\infty \theta_{n_ij}\phi_j$, $f_{1n_i} = \sum_{j=1}^\infty \theta_{1n_ij}\phi_j$ and $f_{2n_i} = \sum_{j=1}^\infty \theta_{2n_ij}\phi_j$.

  Then, by (\ref{con19}) and by Theorem  \ref{tq4},  we get that there are $\varepsilon_i \to 0$ and $C_i = C(\varepsilon_i) \to \infty$ as $i \to \infty$ such that
\begin{equation}\label{dub301}
\sum_{j > C_ik_n} \theta_{n_ij}^2 =  \sum_{j > C_ik_n} (\theta_{1n_ij} + \theta_{2n_ij})^2 = o(n^{-2r}), \quad
 \sum_{j < C_ik_n} \theta_{2n_ij}^2  = o(n^{-2r}).
\end{equation}
By (\ref{dub401}) and (\ref{dub301}), we get
\begin{equation}\label{dub402}
\sum_{j = 1}^\infty \theta_{n_ij}^2 = \sum_{j < C_ik_n} \theta_{n_ij}^2 + o(n^{-2r}) = \sum_{j < C_ik_n} \theta_{1n_ij}^2 + o(n^{-2r}).
\end{equation}
Hence, by (\ref{dub401}), we get $\|f_{2n_i}\| = o(n^{-r})$. We come to contradiction. \end{proof}

\begin{proof}[ Proof of Theorem \ref{tq6}. Necessary conditions] Let (\ref{con19}) do not hold. Then there are $\varepsilon >0$ and sequences $C_i \to \infty$, $n_i \to \infty$ as $i \to \infty$ such that
\begin{equation*}
\sum_{j > C_ik_{n_i}} \theta_{n_ij}^2 > \varepsilon n_i^{-2r} .
\end{equation*}
Then, by A4 and (\ref{u1}), we get
\begin{equation*}
n_i^2\sum_{j > C_ik_{n_i}} \kappa^2_{n_ij}\theta_{n_ij}^2 =o(1).
\end{equation*}
Therefore, by Theorem \ref{tq2}, subsequence $f_{1n_i} = \sum_{j > C_ik_{n_i}} \theta_{n_ij} \phi_j$ is inconsistent. \end{proof}

\begin{proof}[ Proof of Theorem \ref{tq12}] For proof of necessary conditions, it suffices to put $$f_{1n} = \sum_{j < C_1(\epsilon)k_n} \theta_{nj}\phi_j.$$  By Lemma \ref{ld3}, there is $\gamma_\epsilon$ such that $f_{1n} \in \gamma_\epsilon U$.
Proof of sufficiency  is simple and is omitted. \end{proof}

\begin{proof}[ Proof of Theorem \ref{tq8}] Necessary conditions are rather evident, and proof is omitted. Proof of sufficiency is also simple.
\begin{lemma}\label{ld5} Let for sequence $f_n$, $c n^{-r}<\| f_n \| < C n^{-r}$, (\ref{ma2}) hold. Then sequence $f_n$ is purely $n^{-r}$-consistent.
\end{lemma}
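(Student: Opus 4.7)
The plan is to argue by contradiction: assume that the sequence $f_n$ is \emph{not} purely $n^{-r}$-consistent, so by the definition in subsection \ref{ss2.1} there exists a subsequence $f_{n_i} = f_{1n_i} + f_{2n_i}$ with $f_{1n_i}\perp f_{2n_i}$, with $\|f_{2n_i}\| > c_1 n_i^{-r}$, and with $f_{2n_i}$ $n^{-r}$-inconsistent. The upper bound $\|f_{2n_i}\| \le \|f_{n_i}\| < C n_i^{-r}$ follows from the Pythagorean identity, so $f_{2n_i}$ is an admissible sequence to which the hypotheses of the lemma apply.

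Next I would observe that $-f_{2n_i}$ is also $n^{-r}$-inconsistent. This is immediate from Theorem \ref{tq4}: the characterization (\ref{con3}) of inconsistency involves only $|\theta_{nj}|^2$ and is therefore invariant under $f\mapsto -f$. Now invoke the hypothesis (\ref{ma2}) with the inconsistent subsequence chosen to be $-f_{2n_i}$, which gives
\begin{equation*}
\|f_{n_i} - f_{2n_i}\|^2 \;=\; \|f_{n_i}\|^2 + \|f_{2n_i}\|^2 + o(n_i^{-2r}).
\end{equation*}
The left-hand side equals $\|f_{1n_i}\|^2$, while by orthogonality of $f_{1n_i}$ and $f_{2n_i}$ we have $\|f_{1n_i}\|^2 = \|f_{n_i}\|^2 - \|f_{2n_i}\|^2$. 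Substituting and simplifying yields $2\|f_{2n_i}\|^2 = o(n_i^{-2r})$, which contradicts $\|f_{2n_i}\| > c_1 n_i^{-r}$.

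There is no substantive obstacle here; the only subtleties are routine verifications, namely (a) that $f_{2n_i}$ falls into the admissible range $c'n_i^{-r} \le \|f_{2n_i}\| \le C'n_i^{-r}$ so that hypothesis (\ref{ma2}) can be applied to it, and (b) that the remainder term in (\ref{ma2}) is really $o(n_i^{-2r})$ (consistent with the scale of the norms squared). Both follow directly from the setup, and the entire argument amounts to combining the Pythagorean decomposition arising from orthogonality with the approximate Pythagorean decomposition forced by (\ref{ma2}).
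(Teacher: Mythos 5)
Your argument is correct, and it reaches the contradiction by a somewhat different route than the paper. The paper first invokes Theorem \ref{tq6}: if $f_n$ is not purely $n^{-r}$-consistent then (\ref{con19}) fails along a subsequence, and it then plugs into (\ref{ma2}) the specific inconsistent tail $f_{1n_i}=\sum_{j>c_{n_i}k_{n_i}}\theta_{n_ij}\phi_j$ of $f_{n_i}$ itself, for which the cross term equals $\|f_{1n_i}\|^2\ge c_1 n_i^{-2r}$, so (\ref{ma2}) cannot hold. You instead work directly from the definition of pure consistency, take the assumed orthogonal decomposition $f_{n_i}=f_{1n_i}+f_{2n_i}$ with $f_{2n_i}$ inconsistent, and feed $-f_{2n_i}$ into (\ref{ma2}); Pythagoras then gives $2\|f_{2n_i}\|^2=o(n_i^{-2r})$, contradicting $\|f_{2n_i}\|>c_1n_i^{-r}$. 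What your route buys is that it bypasses Theorem \ref{tq6} and the Fourier-coefficient machinery entirely, using only the definition plus the sign-invariance of inconsistency, which you correctly justify via the characterization (\ref{con3}) of Theorem \ref{tq4} (it also follows at once from the fact that $T_n$ is quadratic in the observations, so $f$ and $-f$ induce the same law of $T_n$); in fact you could drop the sign trick altogether, since plugging in $+f_{2n_i}$ gives $\|f_{n_i}+f_{2n_i}\|^2=\|f_{n_i}\|^2+3\|f_{2n_i}\|^2$ and the same contradiction. What the paper's route buys is an explicit inconsistent subsequence built from the tail coefficients, which is the same object used in the neighbouring proofs (Theorems \ref{tq6} and \ref{tq12}). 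Two small points: you are right to read the remainder in (\ref{ma2}) as $o(n_i^{-2r})$ — as printed, $o(n_i^{-r})$ is too weak for either your argument or the paper's, so it is evidently a typo; and, strictly, failure of pure $n^{-r}$-consistency could also occur because $f_n$ fails to be consistent at all, a case you (like the paper) do not mention — it is easily disposed of, e.g.\ by noting that an inconsistent subsequence of $f_n$ itself gives the trivial decomposition with $f_{1n_i}=0$, or by applying (\ref{ma2}) with $f_{1n_i}=f_{n_i}$, so this does not affect correctness.
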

Suppose $f_n = \sum_{j = 1}^\infty \theta_{nj} \phi_j$  is not purely $n^{-r}$-consistent.
Then, by Theorem \ref{tq6}, there are $c_1$ and sequences $n_i$,  and $c_{n_i}$, $c_{n_i} \to \infty$ as $i \to \infty$, such that
\begin{equation*}
\sum_{j > c_{n_i} k_{n_i}} \theta_{n_lj}^2 >c_1 n_i^{-r}.
\end{equation*}
Therefore, if we put $f_{1n_i} = \sum_{j > c_{n_i} k_{n_i}} \theta_{n_ij} \phi_j $, then (\ref{ma2}) does not hold. \end{proof}
\subsection{Proof of Theorems of section \ref{sec5} \label{subsec9.5}}
    Denote
$$
T_{1n}(f) = T_{1n}(f,h_n) =\int_0^1\Bigl(\frac{1}{h_n}\int K\Bigl(\frac{t-s}{h_n}\Bigr)f(s)\, ds\Bigr)^2 dt.
$$
For sequence $\rho_n >0$, define  sets
$$
Q_{nh_n}(\rho_n) = \{f: T_{1n}(f) > \rho_n,\, f \in  \mathbb{L}_2^{per}(\mathbb{R}^1)\}.
$$

Define sequence of kernel-based tests $K_n ={\bf 1}_{\{T_n(Y_n) \ge x_\alpha\}}$,  $0 < \alpha <1$, with $x_\alpha$  defined the equation $\alpha = 1 - \Phi(x_\alpha)$.

Proof of Theorems is based on the following Theorem \ref{tk2} on asymptotic minimaxity of kernel-based tests $K_n$ (see Theorem 2.1.1  in  \cite{er03}).
\begin{thm}\label{tk2} Let $h_n^{-1/2}n^{-1} \to 0$, $h_n \to 0$ as $n \to \infty$.
Let
\begin{equation*}
0 < \liminf_{n \to \infty}  n \rho_n h_n^{1/2} \le \limsup_{n \to \infty} n\rho_nh_n^{1/2} < \infty.
\end{equation*}
Then sequence  of kernel-based tests $K_n $, is asymptotically minimax for the sets of alternatives $Q_{nh_n}(\rho_n)$.
There hold $\alpha(L_n) = \alpha(1 + o(1))$ and
\begin{equation}\label{33}
\beta(K_n,f_n)  = \Phi(x_\alpha - \gamma^{-1} \sigma^{-2}n h_n^{1/2} T_{1n}(f_n))(1 + o(1))
\end{equation}
uniformly onto sequences $f_n\in \mathbb{L}_2^{per}(R^1)$ such that $n h_n^{1/2} T_{1n}(f_n) < C $.
\end{thm}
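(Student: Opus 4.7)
The plan is to reduce the kernel-based problem to the sequence model already set up in (\ref{au1})--(\ref{au111}). Setting $\kappa_{nj}^2 = |\hat K(jh_n)|^2$ and applying Parseval's identity yields the key identities
\begin{equation*}
\sum_{j=-\infty}^\infty |\hat K(jh_n)|^2 = h_n^{-1}\|K\|^2, \qquad \sum_{j=-\infty}^\infty |\hat K(jh_n)|^4 = h_n^{-1}\gamma^2/2 \,(1+o(1)),
\end{equation*}
the second being Parseval applied to the self-convolution $K*K$. In the same way, $T_{1n}(f_n) = \sum_j |\hat K(jh_n)|^2|\theta_{nj}|^2$, which is exactly the signal part of $\|\hat f_n\|^2$.

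For the upper bound on the test, I would expand
\begin{equation*}
\|\hat f_n\|^2 = \sum_j |\hat K(jh_n)|^2|\theta_{nj}|^2 + \frac{2\sigma}{\sqrt n}\sum_j|\hat K(jh_n)|^2\theta_{nj}\bar\xi_j + \frac{\sigma^2}{n}\sum_j|\hat K(jh_n)|^2|\xi_j|^2
\end{equation*}
and centre by subtracting $\sigma^2(nh_n)^{-1}\|K\|^2$. The centred noise term $\frac{\sigma^2}{n}\sum_j|\hat K(jh_n)|^2(|\xi_j|^2-1)$ is a quadratic form in independent Gaussians; its variance equals $2\sigma^4 n^{-2}\sum_j|\hat K(jh_n)|^4 \asymp \sigma^4\gamma^2 n^{-2}h_n^{-1}$, exactly the normalisation built into $T_n$. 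The cross term is linear Gaussian with variance $4\sigma^2 n^{-1}\sum_j|\hat K(jh_n)|^4|\theta_{nj}|^2$, which, under the range $nh_n^{1/2}T_{1n}(f_n) < C$, is of order $n^{-2}h_n^{-1/2}T_{1n}(f_n)^{1/2} \cdot \kappa_n^2$ and thus negligible compared to the noise variance (here one uses that $|\hat K|$ is bounded). A Lyapunov-type CLT for quadratic forms then gives asymptotic normality of $T_n$ uniformly in the prescribed class, yielding (\ref{33}) in direct analogy with the computation (\ref{aq2})--(\ref{aq6}) from the proof of Theorem \ref{tq2}.

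For asymptotic minimaxity I would use the standard Bayesian-prior device: equip the alternative with a Gaussian product prior on the Fourier coefficients, $\theta_{nj} \sim \mathcal{N}(0, v_{nj})$ with variances concentrated in the frequency band where $|\hat K(jh_n)|^2 \asymp 1$ and calibrated so the constraint $T_{1n}(f) \geq \rho_n$ is satisfied with probability tending to one. The resulting Bayes problem is equivalent to testing $\mathcal{N}(0,\Sigma_0)$ versus $\mathcal{N}(0,\Sigma_0+\Sigma_1)$ in sequence space, for which the likelihood ratio has a known Gaussian limit; matching the Bayes error to the upper bound completes the minimax argument.

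The main obstacle will be the uniformity: ensuring both that the CLT for the quadratic form holds uniformly over all $f_n$ with $nh_n^{1/2}T_{1n}(f_n) < C$ (not merely for a single sequence), and that the Gaussian prior concentrates on a neighbourhood of the level set $\{T_{1n}(f) = \rho_n\}$ without leaking mass outside the admissible class. A secondary nuisance is that $\hat K(\omega)$ may vanish on a set of positive density in frequency space, so one must carry out the prior construction on the indices where $|\hat K(jh_n)|$ is bounded away from zero and argue that the contribution of the remaining frequencies to both $T_{1n}(f)$ and the Bayes risk is negligible.
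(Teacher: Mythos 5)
Your reduction to the sequence model and your upper-bound computation follow essentially the paper's own route: the paper does not reprove Theorem \ref{tk2} from scratch but cites Theorem 2.1.1 of \cite{er03} and observes that, after putting $\kappa_{nj}^2=|\hat K(jh_n)|^2$, the argument for Theorem \ref{tq2} carries over; your expansion of $\|\hat f_n\|^2$, the identities $\sum_j|\hat K(jh_n)|^2=h_n^{-1}\|K\|^2(1+o(1))$ and $\sum_j|\hat K(jh_n)|^4=h_n^{-1}\gamma^2/2\,(1+o(1))$, the negligibility of the cross term under $nh_n^{1/2}T_{1n}(f_n)<C$, and the normal approximation for the quadratic form (valid since $|\hat K|$ is bounded while $\sum_j|\hat K(jh_n)|^4\to\infty$) are precisely the analogue of (\ref{aq2})--(\ref{aq5}). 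So the level statement and (\ref{33}) are adequately handled by your sketch.

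The genuine gap is in the minimaxity half. Asymptotic minimaxity here is a sharp-constant claim: the Bayes risk of your prior must reproduce $\Phi(x_\alpha-\gamma^{-1}\sigma^{-2}nh_n^{1/2}\rho_n)$, not merely a bound of the same order. A Gaussian prior $\theta_{nj}\sim\mathcal N(0,v_{nj})$ whose variances are \emph{concentrated on the band where} $|\hat K(jh_n)|^2\asymp 1$ is not least favorable. Subject to the calibration $\sum_j|\hat K(jh_n)|^2v_{nj}=\rho_n$, the variance of the limiting log-likelihood ratio is of order $n^2\sum_j v_{nj}^2$, and minimizing it forces $v_{nj}\propto|\hat K(jh_n)|^2$ over \emph{all} frequencies, giving the optimal value $\asymp n^2\rho_n^2/\sum_j|\hat K(jh_n)|^4\asymp n^2h_n\rho_n^2\gamma^{-2}$, which is exactly the square of the shift in (\ref{33}). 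If instead the prior is supported on a band $B$ on which $|\hat K(jh_n)|^2$ is bounded below (with essentially constant variances there), Cauchy--Schwarz gives $\sum_j v_{nj}^2\ge\rho_n^2\bigl(\sum_{j\in B}|\hat K(jh_n)|^4\bigr)^{-1}>\rho_n^2\bigl(\sum_j|\hat K(jh_n)|^4\bigr)^{-1}$ by a factor bounded away from $1$ (since $|\hat K(\omega)|^2$ is not an indicator), so the prior is strictly more detectable and the resulting lower bound on the type II error falls short of the upper bound: the minimax claim is not closed. The fix is the construction actually used in \cite{er90} and \cite{er03}: take $v_{nj}$ proportional to $|\hat K(jh_n)|^2$ (equivalently to the weights $\kappa_{nj}^2$), which also disposes of your concern about zeros of $\hat K(\omega)$, since such frequencies automatically receive no prior mass.
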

We have
\begin{equation}\label{z33}
T_{1n}(f_n)   = \sum_{j=-\infty}^\infty |\hat K(jh_n)|^2 |\theta_{nj}|^2.
\end{equation}
Note that the unique difference  of setups of Theorems \ref{tk2} and \ref{tq2} is heteroscedastic noise. Thus Theorem \ref{tk2} can be obtained  by easy modification of the proof of Theorem \ref{tq2}.

If we put $|\hat K(jh_n)|^2 = \kappa_{nj}^2$, we get that the asymptotic (\ref{aq1}) in Theorem \ref{tq2} and the asymptotic (\ref{33}) coincide.
The function $\hat K(\omega)$, $\omega \in \mathbb{R}^1$, may have zeros. This cause the main differences in the statement of Theorems and in the reasoning. To clarify the differences we provide   proofs of sufficiency in  version of Theorem \ref{tk3} and {\sl iv.} in version of Theorem \ref{tq1}. Other proofs will be omitted.

\begin{proof}[ Proof of  version of Theorem \ref{tq3}. Sufficiency] Let (\ref{con2}) hold. We have
\begin{equation*}
\begin{split}&
T_{1n}(f_n)  = \sum_{j=-\infty}^\infty |\hat K(jh_n)|^2 |\theta_{nj}|^2 \ge \sum_{|j| h_n < b} |\hat K(jh_n)|^2 |\theta_{nj}|^2\\& \asymp
\sum_{|j| < c_2k_n} |\hat K(jh_n)|^2 |\theta_{nj}|^2
 \asymp  n^{-1}h_n^{-1/2} \asymp n^{-2r}
 \end{split}
\end{equation*}
for $c_2k_n < b h_n^{-1}$. By Theorem \ref{tk2}, this implies consistency.\end{proof}

\begin{proof}[ Proof of {\sl iv.} in version of Theorem \ref{tq1}]
Let $f = \sum_{j=-\infty}^\infty \tau_j \phi_j \notin \gamma U$ for all $\gamma >0$.  Then there is  sequence $m_l$, $m_l \to \infty$ as $l \to \infty$, such that
\begin{equation}\label{bb5}
m_l^{2s} \sum_{|j|\ge m_l}^\infty |\tau_j|^2 = C_l
\end{equation}
with $C_l \to \infty$ as $l \to \infty$.

It is clear that  we can define a sequence $m_l$ such that
\begin{equation}\label{gqq}
m_l^{2s} \sum_{m_l \le |j| \le 2m_{l}} |\tau_j|^2 > \delta C_l,
\end{equation}
where $\delta$, $0< \delta <1/2$,  does not depend on $l$.
Otherwise, we have
 \begin{equation*}
 2^{2s(i-1)} m_l^{2s} \sum_{j=2^{i-1}m_l}^{2^{i}m_l}  \tau_j^2  < \delta  C_l
\end{equation*}
 for  all $i = 1,2,\ldots$,  that implies that the left hand-side of
 (\ref{bb5}) does not exceed $2\delta C_l$.

Define a sequence $\etab_l = \{\eta_{lj}\}_{j=-\infty}^\infty$ such that
 $\eta_{lj} = \tau_j$,  $|j| \ge m_{l}$, and $\eta_{lj} = 0$ otherwise.

Denote
$$
\tilde f_l(x) =  \sum_{j=-\infty}^\infty \eta_{lj} \exp\{2\pi ijx\}.
$$
 For alternatives $\tilde f_l(x)$ we define $n_l$ such that $\|\tilde f_l(x)\| \asymp n_l^{-r}$.

 Then
\begin{equation*}
n_l \asymp C_l^{-1/(2r)} m_l^{s/r}.
\end{equation*}
We have $|\hat K(\omega)| \le \hat K(0) = 1$ for all $\omega \in R^1$ and $|\hat K(\omega)| > c > 0$ for $ |\omega| < b$. Hence, if we put $h_l= h_{n_l} =2^{-1}b^{-1}m_l^{-1}$, then, by (\ref{gqq}), there is $C > 0$  such that, for all $h> 0$, there holds
\begin{equation*}
T_{1n_l}(\tilde f_l,h_l) = \sum_{j=-\infty}^\infty |\hat K(jh_l)\,\eta_{lj}|^2 > C \sum_{j=-\infty}^\infty |\hat K(jh)\,\eta_{lj}|^2 = C T_{1n_l}(\tilde f_l,h).
\end{equation*}
Thus we can choose $h = h_l$ for further reasoning.

By (\ref{gqq}), we get
\begin{equation}\label{k101}
T_{1n_l}(\tilde f_l) =  \sum_{|j|>m_l}\, |\hat K(jh_l) \eta_{lj}|^2  \asymp \sum_{j=m_l}^{2m_l} |\eta_{lj}|^2 \asymp n_l^{-2r}.
\end{equation}
If we put in estimates (\ref{u7}),(\ref{u10}), $k_l = [h_{n_l}^{-1}]$ and $m_l = k_l$, then we get
\begin{equation}\label{k102}
h_{n_l}^{1/2} \asymp C_l^{(2r-1)/2}n_l^{2r-1}.
\end{equation}
By (\ref{k101}) and (\ref{k102}), we get
\begin{equation*}
n_l T_{1n_l}(\tilde f_l)h_{n_l}^{1/2} \asymp C_l^{-(1-2r)/2}.
\end{equation*}
By Theorem \ref{tk2}, this implies inconsistency of sequence of alternatives $\tilde f_l$. \end{proof}
    \subsection{Proof of Theorems of section \ref{sec6} \label{subsec9.6}}
  We have
\begin{equation*}
n^{-1}m_n^{-1}T_n(F) = \sum_{l=0}^{m_n-1}\left(\int_{l/m_n}^{(l+1)/m_n} f(x) dx \right)^2.
\end{equation*}
Using representation  $f(x)$ in terms of Fourier coefficients
\begin{equation*}
f(x) = \sum_{j=-\infty}^\infty \theta_j \exp\{2\pi ijx\},
\end{equation*}
we get
\begin{equation*}
\int_{l/m_n}^{(l+1)/m_n} f(x) dx = \sum_{j=-\infty}^\infty \frac{\theta_j}{2\pi ij}\exp\{2\pi ijl/m_n\} ( \exp\{2\pi ij/m_n\} - 1)
\end{equation*}
for $ 1 \le l < m_n$.

In what follows, we shall use  the following agreement $0/0= 0$.
\begin{lemma}\label{ch1} There holds
\begin{equation}\label{ach1}
n^{-1}m_n^{-1}T_n(F) = m_n \sum_{k=-\infty}^\infty \sum_{j \ne km_n} \frac{\theta_j \bar\theta_{j-km_n}}{4\pi^2j(j-km_n)}(2 - 2 \cos(2\pi j/m_n)).
\end{equation}
\end{lemma}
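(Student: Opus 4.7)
The plan is to compute $\sum_{l=0}^{m_n-1} A_l^2$, where $A_l = \int_{l/m_n}^{(l+1)/m_n} f(x)\,dx$, by expanding each $A_l$ on the trigonometric basis and then applying the discrete orthogonality of exponentials at $l=0,\ldots,m_n-1$. The second displayed identity before the lemma already gives
\[
A_l = \sum_{j \ne 0} a_j\, e^{2\pi i j l/m_n}, \qquad a_j := \frac{\theta_j}{2\pi i j}\bigl(e^{2\pi i j/m_n}-1\bigr),
\]
with the convention $0/0=0$ accounting for $\theta_0=0$ (the hypothesis case) and for any possible vanishing of the factor $e^{2\pi i j/m_n}-1$.

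First I would square $A_l$, write it as $A_l\bar A_l$ (legitimate since $A_l\in\mathbb R$), and interchange summation:
\[
n^{-1}m_n^{-1}T_n(F)=\sum_{l=0}^{m_n-1}A_l\bar A_l
=\sum_{j,\,k}a_j\bar a_k\sum_{l=0}^{m_n-1}e^{2\pi i(j-k)l/m_n}.
\]
Next I would invoke the standard orthogonality
\[
\sum_{l=0}^{m_n-1}e^{2\pi i(j-k)l/m_n}= m_n\cdot\mathbf{1}_{\{j\equiv k\pmod{m_n}\}},
\]
which collapses the double sum to a sum over pairs $(j,k)$ with $k=j-km_n$ for some integer $k\in\mathbb Z$ (reusing the letter $k$ as in the statement), yielding
\[
n^{-1}m_n^{-1}T_n(F)=m_n\sum_{k\in\mathbb Z}\sum_{j}a_j\,\bar a_{j-km_n}.
\]

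The remaining step is algebraic simplification of $a_j\bar a_{j-km_n}$. The denominators combine as $2\pi i j\cdot\overline{2\pi i(j-km_n)}=4\pi^2 j(j-km_n)$, producing the factor in the statement. For the exponential part I would use the $m_n$-periodicity $e^{2\pi i(j-km_n)/m_n}=e^{2\pi i j/m_n}$, so that
\[
\bigl(e^{2\pi i j/m_n}-1\bigr)\overline{\bigl(e^{2\pi i (j-km_n)/m_n}-1\bigr)}
=\bigl(e^{2\pi i j/m_n}-1\bigr)\bigl(e^{-2\pi i j/m_n}-1\bigr)
=2-2\cos(2\pi j/m_n).
\]
Terms with $j=0$ or $j=km_n$ yield $0/0$ and are excluded via the convention, matching the index restriction $j\neq km_n$ in \eqref{ach1}. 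Assembling these pieces produces exactly the right-hand side of \eqref{ach1}.

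The only real subtlety is the unconditional interchange of the (infinite) sum over $j,k$ with the finite sum over $l$; this is justified because $|a_j|\le C/|j|$ (using $|e^{i\phi}-1|\le 2$), so $\sum_j|a_j|^2<\infty$ by Parseval applied to $f\in\mathbb L_2$, and the double sum $\sum_{j,k}|a_j\bar a_k|\,\mathbf{1}_{\{j\equiv k\pmod{m_n}\}}$ converges absolutely by Cauchy-Schwarz. This convergence, rather than the arithmetic itself, is the main point to verify cleanly.
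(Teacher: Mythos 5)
Your proposal is correct and follows essentially the same route as the paper: expanding each cell integral in the trigonometric basis and collapsing the double sum via the discrete orthogonality $\sum_{l=0}^{m_n-1}e^{2\pi i(j-j_1)l/m_n}=m_n\mathbf{1}_{\{j\equiv j_1\ (\mathrm{mod}\ m_n)\}}$, which is exactly the paper's split into the diagonal term $J_1$ and the vanishing term $J_2$. The added remark on absolute convergence justifying the interchange of sums is a harmless refinement of the same argument.
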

\begin{proof}[Proof of Lemma \ref{ch1}] We have
\begin{equation}\label{uh105}
\begin{split}&
n^{-1}m_n^{-1}T_n(F) =\sum_{l=0}^{m_n-1}\Bigl(\sum_{j \ne 0}\frac{\theta_j}{2\pi ij}\exp\{2\pi ijl/m_n\} ( \exp\{2\pi ij/m_n\} - 1)\Bigr)\\& \times
\Bigl(\sum_{j \ne 0} \frac{-\bar\theta_j}{2\pi ij}\exp\{-2\pi ijl/m_n\} (\exp\{-2\pi ij/m_n\} - 1)\Bigr) = J_1 + J_2
\end{split}
\end{equation}
with
\begin{equation}\label{uh106}
\begin{split}&
J_1=\sum_{l=0}^{m_n-1}\sum_{k=-\infty}^\infty\,\,\, \sum_{j_1 = j -km_n} \frac{\theta_j \bar\theta_{j_1}}{4\pi^2jj_1}\exp\{2\pi ilk\}\\&\times(\exp\{2\pi ij/m_n\} - 1)(\exp\{-2\pi ij_1/m_n\} - 1)\\&= m_n \sum_{k=-\infty}^\infty \sum_{j=-\infty}^\infty \frac{\theta_j \bar\theta_{j-k m_n}}{4\pi^2j(j-k m_n)}(2 - 2 \cos(2\pi j/m_n))
\end{split}
\end{equation}
and
\begin{equation}\label{ux}
\begin{split}&
J_2=\sum_{l=0}^{m_n-1}\sum_{j \ne 0} \sum_{j_1 \ne j-k m_n} \frac{\theta_j \bar\theta_{j_1}}{4\pi^2 j j_1} \exp\{2\pi i(j - j_1)l/m_n\}\\&\times(\exp\{2\pi ij/m_n\} - 1)(\exp\{-2\pi ij_1/m_n\} - 1) =0,
\end{split}
\end{equation}
where $j_1 \ne j-k m_n$ signifies that summation is performed over all $j_1$ such that $j_1 \ne j-k m_n$ for all integer $k$.

In the last equality of  (\ref{ux}), we make use of the identity
\begin{equation*}
 \sum_{l=0}^{m_n-1}\exp\{2\pi i(j - j_1)l/m_n\} = \frac{\exp\{2\pi i(j - j_1) m_n/m_n\} -1}{\exp\{2\pi i(j - j_1)/m_n\} -1} =0,
\end{equation*}
if $j-j_1 \ne k m_n$ for all integer $k$.

By (\ref{uh105}) - (\ref{ux}) together, we get (\ref{ach1}). \end{proof}

For any c.d.f $F$ and any $k$ denote $\tilde F_k$ the function having the derivative
$$
 1 + \tilde f_k(x)  = 1 + \sum_{|j| > k} \theta_j \exp\{2\pi ijx\}.
$$
and such that $\tilde F_k(1) = 1$.

 Denote $i_n = [d m_n]$ where $d > 1+c$.
\begin{lemma}\label{lch2} There holds
\begin{equation}\label{hu}
n^{-1}m_{n}^{-2}T_{n}(\tilde F_{i_n})  \le C m_n^{-1} i_n^{-1}\sum_{|j| > i_n} |\theta_j|^2.
\end{equation}
\end{lemma}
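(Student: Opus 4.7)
The plan is to reduce the left-hand side of (\ref{hu}) to a Parseval sum for the discrete Fourier transform on $\{0,1,\ldots,m_n-1\}$, and then exploit the fact that $\sin^2(\pi j/m_n)$ is constant along each residue class modulo $m_n$. As in the derivation preceding Lemma \ref{ch1}, for each $l \in \{0,\ldots,m_n-1\}$ one has
\begin{equation*}
\int_{l/m_n}^{(l+1)/m_n}\tilde f_{i_n}(x)\,dx = \sum_{|j|>i_n} c_j \exp\{2\pi ijl/m_n\}, \qquad c_j = \frac{\theta_j(\exp\{2\pi ij/m_n\}-1)}{2\pi ij}.
\end{equation*}
Grouping $j$ by its residue class modulo $m_n$, so that $\sum_j c_j e^{2\pi ijl/m_n} = \sum_{k=0}^{m_n-1} C_k e^{2\pi ikl/m_n}$ with $C_k = \sum_{j \equiv k\,(\mathrm{mod}\, m_n),\ |j|>i_n} c_j$, and invoking Parseval for the DFT, I obtain $n^{-1}m_n^{-1}T_n(\tilde F_{i_n}) = m_n\sum_{k=0}^{m_n-1}|C_k|^2$, whence $n^{-1}m_n^{-2}T_n(\tilde F_{i_n}) = \sum_{k=0}^{m_n-1}|C_k|^2$.

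Next I apply Cauchy--Schwarz inside each $C_k$ via the splitting $c_j = \theta_j \cdot (\exp\{2\pi ij/m_n\}-1)/(2\pi ij)$, yielding $|C_k|^2 \le A_k B_k$, where
\begin{equation*}
A_k = \sum_{\substack{j\equiv k\\ |j|>i_n}}|\theta_j|^2, \qquad B_k = \sum_{\substack{j\equiv k\\ |j|>i_n}}\frac{|\exp\{2\pi ij/m_n\}-1|^2}{4\pi^2 j^2}.
\end{equation*}
The key observation is that $|\exp\{2\pi ij/m_n\}-1|^2 = 4\sin^2(\pi j/m_n)$ and, writing $j = k + lm_n$, $\sin^2(\pi j/m_n) = \sin^2(\pi k/m_n)$, independent of $l$. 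Therefore
\begin{equation*}
B_k = \frac{\sin^2(\pi k/m_n)}{\pi^2}\sum_{\substack{l\in\mathbb{Z}\\ |k+lm_n|>i_n}}\frac{1}{(k+lm_n)^2}.
\end{equation*}
Since $i_n = [dm_n]$ with $d$ a fixed constant larger than $1+c$, only boundedly many $l$ are excluded from the sum, and the remaining tail is dominated by $\sum_{|l|>d}(lm_n)^{-2} = O(1/m_n^2)$; combined with $\sin^2(\pi k/m_n)\le 1$, this yields the sharp bound $B_k \le C/m_n^2$ uniformly in $k$.

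Summing then gives $\sum_{k}|C_k|^2 \le (C/m_n^2)\sum_{k} A_k = (C/m_n^2)\sum_{|j|>i_n}|\theta_j|^2$, and since $i_n\asymp m_n$ this is bounded by $(C/(m_n i_n))\sum_{|j|>i_n}|\theta_j|^2$, which is (\ref{hu}). The main obstacle is the sharp estimate $B_k = O(1/m_n^2)$: the naive bound $B_k \le (1/\pi^2)\sum_{|j|>i_n}j^{-2} = O(1/i_n)$ loses a factor of $m_n$ and would deliver only $n^{-1}m_n^{-2}T_n(\tilde F_{i_n}) \le (C/i_n)\sum_{|j|>i_n}|\theta_j|^2$, which is weaker by a factor of $m_n$; exploiting the arithmetic-progression structure of $\sin^2(\pi j/m_n)$ is essential to recover the correct power of $m_n$.
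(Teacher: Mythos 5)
Your proof is correct and essentially the paper's own argument in different packaging: the DFT--Parseval identity $n^{-1}m_n^{-2}T_n(\tilde F_{i_n})=\sum_{k=0}^{m_n-1}|C_k|^2$ is exactly the content of Lemma \ref{ch1} (there $2-2\cos(2\pi j/m_n)=|e^{2\pi ij/m_n}-1|^2$ and the terms are already grouped by residue class mod $m_n$), and your subsequent Cauchy--Schwarz within each residue class together with the bound $\sum_{|j|>i_n,\ j\equiv k\,(\mathrm{mod}\,m_n)}j^{-2}=O\bigl((m_n i_n)^{-1}\bigr)$ reproduces the paper's estimate. The only loose point is the bookkeeping of which $l$ enter the progression sum (boundary terms with $|k+lm_n|$ just above $i_n$ are not covered by $\sum_{|l|>d}(lm_n)^{-2}$, but there are $O(1)$ of them, each $O(m_n^{-2})$), a harmless imprecision the paper shares.
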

\begin{proof} Denote $\eta_j = \theta_j$ if $|j|> i_n$ and $\eta_j =0$ if $|j| < i_n$.

We have
\begin{equation*}
\begin{split}&
n^{-1}m_{n}^{-2}T_{n}(\tilde F_{i_n}) =\sum_{k=-\infty}^\infty \,\,\sum_{j \ne k m_n}
\frac{\eta_j \bar\eta_{j-k m_n}}{4\pi^2j(j-k m_n)}(2 -2 \cos(2\pi j/m_n))\\&
\le C  \sum_{|j|> i_n} \Bigl|\frac{\eta_j}{j}\Bigr| \sum_{k=-\infty}^\infty  \Bigl|\frac{\eta_{j+k m_n}}{j+k m_n}\Bigr|
\\&
= C\sum_{j=1}^{m_n} \sum_{k=-\infty}^\infty \Bigl|\frac{\eta_{j+k m_n}}{j+k m_n}\Bigr|  \sum_{k_1=-\infty}^\infty
\Bigl|\frac{\eta_{j+(k+k_1)m_n}}{j+(k+k_1)m_n}\Bigr| \\&
 = C\sum_{j=1}^{m_n} \Bigl(\sum_{k=-\infty}^\infty \Bigl|\frac{\eta_{j+k m_n}}{j+k m_n}\Bigr|\Bigr)^2\\&
\le  C\sum_{j=1}^{m_n} \Bigl(\sum_{|k|> d-1} |\eta_{j+k m_n}|^2\Bigr)
  \Bigl(\sum_{|k|> d-1} (j+k m_n)^{-2}\Bigr)
\\& \le C \sum_{j=-\infty}^\infty  |\eta_j|^2 \sum_{|k| > d}(k m_n)^{-2}
 \le C m_n^{-1} i_n^{-1}\sum_{|j| > i_n} |\theta_j|^2.
\end{split}
\end{equation*}
\end{proof}

\begin{proof}[ Proof of version of Theorem \ref{tq3}. Sufficiency] Let (\ref{con2}) hold.
Denote
$$
\tilde f_{n}  = \tilde f_{n,c_2k_n}  =\sum_{|j| > c_2 k_n} \theta_{nj}\phi_j\quad\mbox{and}\quad \bar f_{n} = \bar f_{n,c_2k_n} = f_n - \tilde f_{n}
$$
Denote $\tilde F_{n}$, $\bar F_{n}$  the functions having derivatives $1+\tilde f_{n,c_2k_n}$ and $1+\bar f_{n,c_2k_n}$ respectively and such that $\tilde F_{n}(1) = 1$,   and $\bar F_{n}(1) =1$.

Let $T_n$ be  chi-squared test statistics with a number of cells $m_n= [c_3 k_n]$  where $c_2 < c_3 $.
Denote $\mathbb{L}_{2,n}$ linear space generated functions ${\bf 1}_{\{x \in ((j-1)/m_n,j/m_n)\}}$, $1  \le j \le m_n$.

Denote $\bar h_n$  orthogonal projection of $\bar f_n$  onto $\mathbb{L}_{2,n}$. Denote $\tilde h_n$ orthogonal projection of  $\tilde f_n$ onto the line $\{h\,:\, h = \lambda \bar h_n,\, \lambda \in \mathbb{R}^1\}$.

Note that $n^{-1/2} T_n^{1/2}(F_n)$ equals the $\mathbb{L}_{2,n}$-norm of $f_{n}$. Therefore we have
\begin{equation}\label{ee9}
n^{-1/2}\, T_n^{1/2}(F_n) \ge  \|\bar h_n + \tilde h_n\|.
\end{equation}
Hence, by Theorem \ref{chi2}, it suffices to show that, for some choice of $c_3$, there holds $\|\bar h_n + \tilde h_n\| \asymp n^{-r}$ if $m_n > c_3\, k_n$.

Denote $\bar g_n = \bar f_n - \bar h_n$ and $\tilde g_n = \tilde f_n - \tilde h_n$.

Denote
$$
\bar p_{jn} = \frac{1}{m_n} \int_{(j-1)/m_n}^{j/m_n} \bar f_n(x) dx, \quad 1 \le j \le m_n.
$$
By Lemmas 3 and 4 in section 7 of \cite{ul}, we have
\begin{equation}\label{h5}
 \|\bar g_n\|^2 = m_n\sum_{j=1}^{m_n} \int_{(j-1)/m_n}^{j/m_n}(\bar f_n(x) - \bar p_{jn})^2 \, dx \le 2 \omega^2\Bigl(\frac{1}{m_n}, \bar f_n\Bigr).
\end{equation}
Here
$$\omega^2(h,f) = \int_0^1 (f(t+h) - f(t))^2\,dt, \quad h>0,$$
for any $f \in \mathbb{L}_2^{per}$.
If $f = \sum_{j=-\infty}^\infty \theta_j \phi_j,
$ then
\begin{equation}\label{h6}
\omega^2(s,f) =  2\sum_{j=1}^\infty |\theta_j|^2\,(2  - 2\cos (2\pi js)).
\end{equation}
 Since $1 - cos( x) \le x^2$, then, by (\ref{h5}) and (\ref{h6}), we have
\begin{equation}\label{h8}
\|\bar g_n\|  \le\, 4 \pi (c_2\,k_n/m_n )^{1/2}\, \|\bar f_n\|= \delta \|\bar f_n\|(1+ o(1)),
\end{equation}
where $\delta = 4\pi\,(c_2/c_3)^{1/2}$.

For any functions $g_1, g_2 \in \mathbb{L}_2(0,1)$ denote $(g_1,g_2)$ inner product of $g_1$ and $g_2$.

We have
\begin{equation}\label{ee10}
0 = (\bar f_n, \tilde f_n) = (\bar h_n, \tilde h_n) + (\bar g_n, \tilde f_n).
\end{equation}
By (\ref{h8}), we get
\begin{equation*}
|(\bar g_n, \tilde f_n)| \le \|\bar g_n\|\, \|\tilde f_n\| \le \delta C^2 n^{-2r}.
\end{equation*}
Therefore we get
\begin{equation}\label{ee11}
|(\bar h_n, \tilde h_n) | \le \delta C^2 n^{-2r}.
\end{equation}
By (\ref{h8}) - (\ref{ee11}), we get that, for sufficiently small $\delta$ there holds $\|\bar h_n + \tilde h_n\| \asymp n^{-r}$. \end{proof}

\begin{proof}[ Proof of version of Theorem \ref{tq4}. Sufficiency] Let $k_n= [c_1 n^{2 - 4r}]$.
For $c_2 > 2c_1$, we have
\begin{equation}\label{h11}
 T_n^{1/2}(F_{n}) \le T_n^{1/2}(\bar F_{n}) + T_n^{1/2}(\tilde F_{n}).
\end{equation}
By Lemma \ref{lch2}, we have
\begin{equation}\label{h12}
n^{-1} T_n(\tilde F_{n}) \le c_2^{-1}m_n k_n^{-1} \|\tilde f_n \|^2  \le c_2^{-1} c_1 C n^{-2r}.
\end{equation}
We have
\begin{equation}\label{h13}\|\bar f_n\| \ge n^{-1/2} T_n^{1/2}(\bar F_n).
\end{equation}
Since one can take arbitrary value $c_2$, $c_2 > 2c_1$, then, by Theorem \ref{chi2}, (\ref{con3}) and  (\ref{h11}) - (\ref{h13}) together, we get
inconsistency of sequence $f_n$. \end{proof}

\begin{proof}[ Proof of  iv. in version of Theorem \ref{tq1}]
 Suppose  opposite. Then there is sequence $i_l$, $i_l \to \infty$ as $l \to \infty$, such that
\begin{equation*}
i_l^{2s} \|\tilde f_{i_l}\|^2  = C_l,
\end{equation*}
with $C_l \to \infty$ as $l \to \infty$. Here $f = \sum_{j=-\infty}^\infty\tau_j \phi_j$ and $\tilde f_{i_l} = \sum_{|j| >i_l}\tau_j \phi_j$.

Let $n_l$ be such that $n_l^{-r} \asymp \|\tilde f_{i_l}\|$.

Then, estimating similarly to  (\ref{u7}) and (\ref{u10}), we get $i_{l}^{-1/2} \asymp C_l^{(2r-1)/2}n_l^{2r-1}$.

If $m_l = o(i_l)$, then, by Lemma \ref{lch2}, we get
\begin{equation}\label{xy1}
m_{l}^{-1/2}T_{n_l}(\tilde F_{i_l}) \le m_{l}^{1/2} i_l^{-1} n_l \sum_{|j| > i_l} |\tau_j|^2 \asymp m_{l}^{1/2} i_l^{-1} n_l^{1-2r} = o( C_l^{(2r-1)/2}).
\end{equation}

Let $m_l \asymp i_l$ or $i_l = o(m_l)$. Then we have
\begin{equation}\label{udav1}
n_l^{-2r} \asymp \|\tilde f_{i_l}\|^2  \ge n_l^{-1} T_{n_l}(\tilde F_{i_l}).
\end{equation}
Therefore
\begin{equation}\label{xy2}
m_l^{-1/2} T_{n_l}(\tilde F_{i_l}) \le Cm_l^{-1/2}n_l^{1-2r} = C m_l^{-1/2}i_l^{1/2} C_l^{(2r-1)/2}  = o(1).
\end{equation}
By  Theorem \ref{chi2}, (\ref{xy1}) -(\ref{xy2}) imply {\sl iv.} \end{proof}

\begin{proof}[ Proof of version of Theorem \ref{tq11}] Let
$
f_{1n} = \sum_{|j|< ck_n} \theta_{nj} \phi_j.
$
Then, by Lemma \ref{ld3}, there is $\gamma$ such that $f_{1n} \in \gamma U$.

Denote $F_{1n}$ function having derivative $1 + f_{1n}$ and such that $F_{1n}(1) = 1$.

We have
\begin{equation}\label{uxa1}
|T_n^{1/2}(F_n) - T_n^{1/2}(F_{1n})| \le T_n^{1/2}(F_n - F_{1n} + F_0).
\end{equation}
If $m_n = [c_0 k_n]$  and $c > 2c_0$, then, by Lemma \ref{lch2}, we have
\begin{equation}\label{uxa2}
n^{-1} T_n(F_n - F_{1n} + F_0)  \le c_0 c^{-1}\, \| f_n - f_{1n}\|^2.
\end{equation}
Since the choice of $c$ is arbitrary, by Theorem \ref{chi2}, (\ref{uxa1}) and (\ref{uxa2}) imply (\ref{uuu}) and (\ref{uu1}). \end{proof}

  Proof of  {\sl iii.} in version of Theorem \ref{tq1} and versions of Theorems \ref{tq7}, \ref{tq6}, \ref{tq12}, \ref{tq8} follows from Theorem \ref{chi2} and versions of Theorems \ref{tq3} and \ref{tq4}  using the same reasoning as in  subsection \ref{subsec9.4}.

 \begin{proof}[ Proof of version of Theorem \ref{tq5}]
 Denote  $F_{1n}$ c.d.f. having the density $1 + f_{1n}$.

 Let $f_{1n} = \sum_{j=-\infty}^\infty \eta_{nj} \phi_j$.
 For any $a >0$ denote
 $$
 \bar f_{a1n} =\sum_{|j| < ak_n} \eta_{nj} \phi_j\quad \mbox{and}\quad  \tilde f_{a1n} = f_{1n} - \bar f_{a1n}.
 $$
  Define functions $\bar F_{a,1n}(x)$, $\tilde F_{a,1n}(x)$ with $x \in [0,1]$ such that
 $1 + \bar f_{a,1n}(x) = d \bar F_{a,1n}(x)/dx$, $\tilde f_{a,1n}(x) = d \tilde F_{a,1n}(x)/dx$ and $1 + \bar F_{a,1n}(1) =1$,  $\bar F_{a,1n}(1) =1$.

  We have
 \begin{equation}\label{ch121}
 \begin{split}&
 T_n^{1/2}(F_n+ F_{1n}-F_0) \le T_n^{1/2}(F_n) + T_n^{1/2}(F_{1n})\\& \le T_n^{1/2}(F_n) + T_n^{1/2}(\bar F_{a,1n}) + T_n^{1/2}(\tilde F_{a1n})
 \end{split}
 \end{equation}
and
 \begin{equation}\label{ch200}
T_n^{1/2}(F_n+ F_{1n}- F_0) \ge T_n^{1/2}(F_n) - T_n^{1/2}(\bar F_{a,1n}) - T_n^{1/2}(\tilde F_{a1n}).
 \end{equation}
 Therefore, by Theorem \ref{chi2}, it suffices to estimate $T_n^{1/2}(\bar F_{a,1n})$ and $T_n^{1/2}(\tilde F_{a1n})$.

We  have
 \begin{equation}\label{ch3}
m_n^{-1/2}T_n(\bar F_{a,1n}) \le nm_n^{-1/2} \|\bar f_{a,1n}\|^2 = o(n^{1-2r} m_n^{-1/2}) = o(1).
 \end{equation}
 By Lemma \ref{lch2}, we have
 \begin{equation}\label{ch4}
m_n^{-1/2} T_n(\tilde F_{a,1n}) \le Ca^{-1}nm_n^{-1/2} \|\tilde f_{a,1n}\|^2 = O(a^{-1}n^{1-2r} m_n^{-1/2}) = O(a^{-1}).
 \end{equation}
 By Theorem \ref{chi2}  and (\ref{ch121}) - (\ref{ch4}) together, we get  version of Theorem \ref{tq5}. \end{proof}
    \subsection{Proof of Theorems of section \ref{sec7} \label{subsec9.7}}
    Lemma \ref{lc1} given below  allows to carry over corresponding reasoning for Brownian bridge $b(t)$, $t \in (0,1)$, instead of empirical distribution functions.
 \begin{lemma}\label{lc1} For any $x > 0$, we have
\begin{equation}\label{lm1}
{P}_{F_n}(nT^2(\hat F_n - F_0) < x) -  {P}\,(T^2(b(t) + \sqrt{n}(F_n(t) - F_0(t))) < x)  = o(1)
 \end{equation}
 uniformly onto $F_n$ such that $T(F_n - F_0)  < cn^{-1/2}$.
 \end{lemma}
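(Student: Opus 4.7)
The plan is to reduce both sides to weighted sums of squares of independent coordinates by passing to the sine Fourier basis, which simultaneously diagonalizes $T^2(\cdot-F_0)$ and gives the Karhunen--Lo\`eve decomposition of the Brownian bridge. Working with $\psi_k(t)=\sqrt{2}\sin(\pi k t)$, $k\ge 1$, a short integration-by-parts computation yields $\langle \hat F_n-F_0,\psi_k\rangle=\tfrac{\sqrt{2}}{n\pi k}\sum_{i=1}^n\cos(\pi k X_i)$ and $\langle F_n-F_0,\psi_k\rangle=\theta_{nk}/(\pi k)$, where $\theta_{nk}=\int f_n\phi_k\,dt$ are the cosine coefficients of the density perturbation. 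Parseval, combined with the standard expansion $b=\sum_k(\xi_k/(\pi k))\psi_k$ with $\xi_k\stackrel{\mathrm{iid}}\sim N(0,1)$, then gives
\begin{equation*}
nT^2(\hat F_n-F_0)=\sum_{k=1}^\infty\frac{2S_{k,n}^2}{\pi^2 k^2},\qquad T^2\bigl(b+\sqrt n(F_n-F_0)\bigr)=\sum_{k=1}^\infty\frac{(\xi_k+\sqrt n\,\theta_{nk})^2}{\pi^2k^2},
\end{equation*}
where $S_{k,n}=n^{-1/2}\sum_i\cos(\pi k X_i)$.

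Decomposing $S_{k,n}=\sqrt n\,\theta_{nk}/\sqrt 2+\zeta_{k,n}$, with $\zeta_{k,n}=n^{-1/2}\sum_i(\cos(\pi k X_i)-\mathbf{E}_{F_n}\cos(\pi k X))$ a centered i.i.d.\ sum, produces the identity $2S_{k,n}^2=(\sqrt n\,\theta_{nk}+\sqrt 2\,\zeta_{k,n})^2$: the two displayed sums share identical shifts and differ only in the noise coordinates $\sqrt 2\,\zeta_{k,n}$ versus $\xi_k$. For any fixed $K$ the bound $\theta_{nk}^2\le c^2\pi^2 k^2/n$ (implied by $T(F_n-F_0)<cn^{-1/2}$) forces $\theta_{nk}\to 0$ coordinate-wise and hence $\mathrm{Var}_{F_n}\cos(\pi k X)\to 1/2$ with analogous convergence of cross-covariances, so a multivariate Lindeberg--Berry--Esseen CLT for triangular arrays yields $(\sqrt 2\,\zeta_{1,n},\ldots,\sqrt 2\,\zeta_{K,n})\Rightarrow(\xi_1,\ldots,\xi_K)$ uniformly over the class. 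The continuous mapping theorem applied to the polynomial $(y_1,\ldots,y_K)\mapsto\sum_{k\le K}(\sqrt n\,\theta_{nk}+y_k)^2/(\pi k)^2$ (continuous in $y$) then matches the laws of the two truncated quadratic sums at every continuity point.

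For the tails one has $\mathbf{E}[2S_{k,n}^2/(\pi k)^2]\le (2+n\theta_{nk}^2)/(\pi k)^2$ and the analogous bound for the Brownian--bridge side, so
\begin{equation*}
\mathbf{E}\sum_{k>K}\frac{2S_{k,n}^2}{\pi^2 k^2}\le 2\sum_{k>K}\frac{1}{\pi^2 k^2}+\sum_{k>K}\frac{n\theta_{nk}^2}{\pi^2 k^2}\le \frac{C}{K}+o_K(1)
\end{equation*}
uniformly in $n$ and $F_n$, where the second piece is $o_K(1)$ because $\sum_k n\theta_{nk}^2/(\pi k)^2=nT^2(F_n-F_0)\le c^2$. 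Markov's inequality converts this into uniform smallness of the tails in probability. Given $\varepsilon>0$, choose $K=K(\varepsilon,c)$ so that both tails lie below $\varepsilon/3$ in probability uniformly in $n$ and $F_n$; the truncated CLT then controls the remaining finite-dimensional piece for $n$ large, giving $|P_{F_n}(nT^2(\hat F_n-F_0)<x)-P(T^2(b+\sqrt n(F_n-F_0))<x)|<\varepsilon$ uniformly over the class.

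The main obstacle is that the shifts $\sqrt n\,\theta_{nk}$ need not converge in any fixed coordinate and may drift to arbitrarily high frequencies as $n\to\infty$, so $nT^2(\hat F_n-F_0)$ has no single weak limit and one cannot invoke a Slutsky-type argument directly; the shifts must be carried inside the truncated quadratic form and compared entry-by-entry with Gaussian counterparts. The Cram\'er--von Mises weights $(\pi k)^{-2}$ are therefore essential, both because they are precisely what makes $\sum_k n\theta_{nk}^2/(\pi k)^2$ equal to the quantity $nT^2(F_n-F_0)$ that the hypothesis controls, and because their $k^{-2}$ decay furnishes the uniform high-frequency tail bound.
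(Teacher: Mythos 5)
Your route (diagonalizing $T^2$ in the sine basis and comparing the empirical cosine sums with shifted Gaussians via a finite-dimensional Berry--Esseen bound) is genuinely different from the paper, which instead couples $\sqrt{n}(\hat F_n-F_n)$ with $b(F_n(t))$ by the Hungarian construction and then removes the time change $F_n$ via an $L^1$ perturbation bound (Lemma \ref{lc3}) combined with a uniform bounded-density estimate (Lemma \ref{lc4}). However, as written your argument has a genuine gap in the tail step. You claim $\sum_{k>K} n\theta_{nk}^2/(\pi^2k^2)=o_K(1)$ \emph{uniformly} in $n$ and $F_n$, justified only by $\sum_k n\theta_{nk}^2/(\pi^2k^2)=nT^2(F_n-F_0)\le c^2$. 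That bound does not give uniform smallness of the tail: take $f_n(x)=\tfrac12\sqrt2\cos(\pi k_n x)$ with $k_n\asymp\sqrt n$; then $nT^2(F_n-F_0)$ stays of order one and, for every fixed $K$, the whole quantity $\sum_{k>K}n\theta_{nk}^2/(\pi^2k^2)$ stays of order one once $k_n>K$. Such high-frequency alternatives are exactly the ones this paper cares about (cf.\ case {\it ii.} in the proof of Lemma \ref{lum1}), so the tails cannot be discarded. The repair is to note that this deterministic part of the tail is \emph{common} to both sides: the empirical tail equals $\sum_{k>K}n\theta_{nk}^2/(\pi^2k^2)$ plus a cross term and a pure-noise term that are uniformly $o_P(1)$ as $K\to\infty$ (Cauchy--Schwarz and Markov), and likewise for the Gaussian tail; one must then compare the truncated laws at the shifted threshold $x-\sum_{k>K}n\theta_{nk}^2/(\pi^2k^2)$ rather than at $x$.

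Even after that repair a second ingredient is missing: to convert ``the truncated laws are uniformly close and the residual random terms are uniformly $o_P(1)$'' into closeness of the two distribution functions at the fixed point $x$, you need an anti-concentration statement, namely that the law of $\sum_{k\le K}(\xi_k+\sqrt n\,\theta_{nk})^2/(\pi^2k^2)$ puts uniformly small mass on short intervals, uniformly over shifts with $n\theta_{nk}^2/(\pi^2k^2)\le c^2$. This is precisely the content of the paper's Lemma \ref{lc4} (bounded density of $(\xi_1+a)^2+\tfrac14(\xi_2+b)^2$ for bounded $a,b$), and your proposal never supplies it; the phrase ``the truncated CLT then controls the remaining finite-dimensional piece'' silently assumes it. Two smaller points: the lemma is stated for arbitrary c.d.f.'s $F_n$ (no density assumed), so the coefficients should be defined through Stieltjes integrals $\int\cos(\pi k t)\,dF_n(t)$ rather than through $f_n$; and the ``continuous mapping'' step should be replaced by a Berry--Esseen bound uniform over convex sets (or over intervals for the quadratic form), since the shifts $\sqrt n\,\theta_{nk}$ are $n$-dependent and need not converge. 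With these additions your decomposition argument would give an alternative, KMT-free proof; without them it does not yet establish the uniformity claimed in the lemma.
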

 If $\sqrt{n}(F_n - F_0) \to G$  in Kolmogorov - Smirnov distance, (\ref{lm1}) has been proved Chibisov \cite{chib} without any statements of uniform convergence.

 Lemma \ref{lc1} follows from Lemmas \ref{lc2} and \ref{lc4} given below after implementation of Hungary construction (see Th. 3, Ch. 12,  section 1, \cite{wel}).

 \begin{lemma}\label{lc2} For any $x > 0$, we have
 \begin{equation}\label{lm2}
 \begin{split}&
 \mathbf{ P}\,(T^2(b(F_n(t)) + \sqrt{n}(F_n(t) - F_0(t))) < x) \\& -  {\mathbf P}\,(T^2(b(t) + \sqrt{n}(F_n(t) - F_0(t))) < x)  = o(1)
 \end{split}
 \end{equation}
 uniformly onto $F_n$ such that $T(F_n - F_0)  < cn^{-1/2}$.
 \end{lemma}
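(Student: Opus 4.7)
The plan is to compare the two CvM functionals directly by expanding the difference of squares and controlling the $L_2$-distance between $b \circ F_n$ and $b$ on $[0,1]$. Throughout, write $g_n(t) = \sqrt{n}(F_n(t) - F_0(t))$, so by hypothesis $\|g_n\|_{L_2(0,1)} \le c$. Set
\[
A_n = T^2(b \circ F_n + g_n), \qquad B_n = T^2(b + g_n),
\]
and note $T^2(h) = \int_0^1 h^2(t)\,dt$. Using $a^2-b^2=(a-b)(a+b)$,
\[
|A_n - B_n| \le \sup_{t \in [0,1]}|b(F_n(t))-b(t)|\cdot \bigl(2\|b\|_\infty + 2\|g_n\|_{L_2}\bigr).
\]
So everything reduces to showing $\sup_t|b(F_n(t))-b(t)| \to 0$ in probability, uniformly in $F_n$ satisfying $T(F_n-F_0)<cn^{-1/2}$.

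First I would promote the $L_2$-bound on $F_n-F_0$ to a uniform bound. Since each $F_n$ is a monotone c.d.f.\ and $F_0(t)=t$, if at some $t_0$ one had $F_n(t_0)-t_0 = \delta > 0$, then for every $t\in[t_0,t_0+\delta/2]$ monotonicity forces $F_n(t)-t \ge \delta/2$, giving $\int (F_n-F_0)^2 \ge \delta^3/8$; the symmetric argument handles the case $F_n(t_0)-t_0<0$. Hence
\[
\|F_n - F_0\|_\infty \le \bigl(8\,\|F_n-F_0\|_{L_2}^2\bigr)^{1/3} \le C\,n^{-1/3},
\]
uniformly over the admissible class. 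This is the crucial step: converting the $L_2$-proximity into the uniform proximity needed to exploit path continuity of $b$.

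Next I would use the fact that sample paths of the Brownian bridge are a.s.\ uniformly continuous on $[0,1]$. Denoting the modulus of continuity $\omega(b;\delta)=\sup_{|s-u|\le\delta}|b(s)-b(u)|$, one has $\omega(b;\delta)\to 0$ a.s.\ as $\delta\to 0$. Therefore
\[
\sup_{t}|b(F_n(t)) - b(t)| \le \omega(b;\,\|F_n-F_0\|_\infty) \le \omega(b;\,Cn^{-1/3}) \to 0 \quad \text{a.s.},
\]
and the bound is uniform over all $F_n$ in the class. Combined with the deterministic bound $\|g_n\|_{L_2}\le c$ and the a.s.\ finiteness of $\|b\|_\infty$, this yields
\[
|A_n - B_n|\ \xrightarrow[n\to\infty]{\mathbf P}\ 0
\]
uniformly in $F_n$.

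Finally, the law of $B_n$ is that of a CvM functional of the Gaussian process $b+g_n$; this has an absolutely continuous distribution on $(0,\infty)$, so every $x>0$ is a continuity point of its c.d.f. Hence $|A_n-B_n|\to 0$ in probability forces
\[
\mathbf P(A_n<x) - \mathbf P(B_n<x) \to 0,
\]
uniformly in $F_n$, which is exactly \eqref{lm2}. The main obstacle is the passage from the $L_2$-bound to a uniform bound on $F_n-F_0$; this is where monotonicity of c.d.f.'s is essential, and without it one could not apply the modulus-of-continuity estimate for $b$. The remaining pieces—expansion of the difference of squares, uniform continuity of Brownian bridge, and continuity of the limiting CvM distribution—are standard.
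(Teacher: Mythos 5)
Your control of the difference of the two statistics is correct and in fact takes a different (valid) route from the paper: the paper bounds $\mathbf{E}\,[T^2(b(F_n(t))-b(t))]$ exactly through the covariance of the Brownian bridge, obtaining the bound $T^{1/2}(F_n-F_0)$ (Lemma \ref{lc3}), whereas you upgrade the $L_2$ bound $T(F_n-F_0)\le cn^{-1/2}$ to $\|F_n-F_0\|_\infty \le Cn^{-1/3}$ via monotonicity of c.d.f.'s and then invoke the a.s.\ modulus of continuity of $b$. Either way one gets $|A_n-B_n|\to 0$ in probability, uniformly over the admissible class, since your majorant $\omega(b;Cn^{-1/3})\bigl(2\|b\|_\infty+2c\bigr)$ does not depend on $F_n$.

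The genuine gap is in your final step. From $\mathbf{P}(A_n<x)\le \mathbf{P}(B_n<x+\epsilon)+\mathbf{P}(|A_n-B_n|\ge\epsilon)$ (and the reverse inequality) you need $\sup_{F_n}\mathbf{P}\bigl(B_n\in[x-\epsilon,x+\epsilon)\bigr)$ to be small uniformly in $n$ and in $F_n$; but the law of $B_n=T^2(b+\sqrt{n}(F_n-F_0))$ changes with $n$ and with $F_n$, so absolute continuity of each individual law (i.e.\ $x$ being a continuity point of each fixed c.d.f.) does not yield the uniformity you assert. What is needed is equicontinuity of this family of c.d.f.'s at $x$, that is, a density bound uniform over all $F_n$ with $nT^2(F_n-F_0)\le c^2$. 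This is precisely the content of the paper's Lemma \ref{lc4}, proved by writing $B_n=\sum_j (\xi_j+n^{1/2}\theta_{nj})^2/(\pi^2 j^2)$ and checking that the uniformly bounded noncentralities of the first two coordinates give uniformly bounded densities. Your argument is complete once such a uniform anti-concentration statement is added; as written, the uniformity claimed in your last display is not justified.
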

 Lemma \ref{lc2} follows from Lemmas \ref{lc3} and \ref{lc4} given below.
 \begin{lemma}\label{lc3} There holds
 \begin{equation}\label{lm100}
 {\mathbf E}\,[|T^2(b(F_n(t)))- T^2(b(t))|]  < c T^{1/4}(F_n - F_0).
 \end{equation}
\end{lemma}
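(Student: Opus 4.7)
The plan is to exploit the factorization $b^2(F_n(t)) - b^2(t) = (b(F_n(t)) - b(t))(b(F_n(t)) + b(t))$ together with the Brownian bridge covariance identity $\mathbf{E}[(b(s) - b(t))^2] = |s-t| - (s-t)^2$ and the variance bound $\mathbf{E}[b^2(s)] = s(1-s) \le 1/4$.

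First, by Fubini I will exchange $\mathbf{E}$ and $\int_0^1 dt$; this is justified by the uniform $L^2$ bound just recalled. Applying Cauchy--Schwarz pointwise in $t$ under the expectation gives
\begin{equation*}
\mathbf{E}|b^2(F_n(t)) - b^2(t)| \le \bigl(\mathbf{E}(b(F_n(t)) - b(t))^2\bigr)^{1/2}\,\bigl(\mathbf{E}(b(F_n(t)) + b(t))^2\bigr)^{1/2}.
\end{equation*}
The first factor equals $(|F_n(t) - t| - (F_n(t) - t)^2)^{1/2} \le |F_n(t) - t|^{1/2}$; the second is at most $(2\mathbf{E}b^2(F_n(t)) + 2\mathbf{E}b^2(t))^{1/2} \le 1$, uniformly in $t$.

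Integrating in $t$ and applying the Cauchy--Schwarz inequality twice against the constant~$1$ will then produce
\begin{equation*}
\int_0^1 |F_n(t) - t|^{1/2}\, dt \le \Bigl(\int_0^1 |F_n(t) - t|\, dt\Bigr)^{1/2} \le \Bigl(\int_0^1 (F_n(t) - t)^2\, dt\Bigr)^{1/4} = T^{1/2}(F_n - F_0).
\end{equation*}
Since the lemma is invoked only when $T(F_n - F_0) \le 1$ (indeed $T(F_n - F_0) = O(n^{-1/2})$ under the standing hypothesis of Lemma~\ref{lc2}), the bound $c\,T^{1/2}$ majorises $c\,T^{1/4}$ up to a constant, which yields the stated inequality.

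I do not expect any real obstacle: the whole argument is essentially one application of \emph{``difference of squares followed by Cauchy--Schwarz''}, plus the standard moment computations for the Brownian bridge. The only mild point worth flagging is that the proof actually delivers the sharper exponent~$1/2$; the exponent~$1/4$ in the statement is likely chosen just for later convenience of combining with the Hungarian construction in Lemma~\ref{lc2}.
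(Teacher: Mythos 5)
Your proof is correct, and it reaches a slightly sharper conclusion than the paper by a mildly different organization of the same ingredients. The paper factors the difference of squares at the level of the functional: $|T^2(b(F_n(\cdot)))-T^2(b(\cdot))| = |T(b(F_n(\cdot)))-T(b(\cdot))|\,|T(b(F_n(\cdot)))+T(b(\cdot))|$, applies Cauchy--Schwarz to the \emph{expectation} of this product, bounds $\mathbf{E}[(T(b(F_n(\cdot)))+T(b(\cdot)))^2]$ by a constant, uses the triangle inequality $|T(X)-T(Y)|\le T(X-Y)$, and then computes $\mathbf{E}[T^2(b(F_n(\cdot))-b(\cdot))]=\int_0^1\bigl(|F_n-F_0|-(F_n-F_0)^2\bigr)\,dt\le\int_0^1|F_n-F_0|\,dt$; because the Cauchy--Schwarz step is applied to $\mathbf{E}^2$, a square root is lost at the end, which is exactly why the statement carries the exponent $1/4$. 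You instead factor $b^2(F_n(t))-b^2(t)$ pointwise in $t$, apply Cauchy--Schwarz under the expectation for each fixed $t$ using the bridge covariance $\mathbf{E}[(b(s)-b(t))^2]=|s-t|-(s-t)^2$ and $\mathbf{E}[b^2(s)]\le 1/4$, and only then integrate (the Tonelli interchange is harmless since the integrand is nonnegative and $F_n$ is Borel); this avoids squaring the expectation and delivers $\mathbf{E}[|T^2(b(F_n(\cdot)))-T^2(b(\cdot))|]\le T^{1/2}(F_n-F_0)$ directly. Your closing remark is also sound: since $|F_n-F_0|\le 1$ one always has $T(F_n-F_0)\le 1$, so $T^{1/2}\le T^{1/4}$ and the stated bound follows (in the application one even has $T(F_n-F_0)=O(n^{-1/2})$, where only $o(1)$ is needed). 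So both arguments are ``difference of squares plus Cauchy--Schwarz plus second moments of the bridge''; yours buys the sharper exponent with no extra cost, while the paper's norm-level version is what produces the $1/4$ actually quoted and used later.
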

\begin{proof} We have
\begin{equation}\label{lm3}
\begin{split}&
 \mathbf{E}^2\, [\,|T^2(b(F_n(t))- T^2(b(t))|] \le \mathbf{E}^2\,[|(T(b(F_n(t))- T(b(t)))\,(T(b(F_n(t)) + T(b(t)))|]\\&
\le  \mathbf{E}\,[((T(b(F_n(t)))- T(b(t)))^2]\,\mathbf{E}\, [(T(b(F_n(t))) + T(b(t)))^2]\\& \le
C  \mathbf{E}\, [((T(b(F_n(t))- T(b(t)))^2]  \le C \mathbf{E}\, [T^2(b(F_n(t)) -b(t)))] \\& =
\int_0^1 (F_n(t) - F^2_n(t) - 2\min (F_n(t), F_0(t)) + 2F_n(t)F_0(t) + F_0(t) - F_0^2(t) \, dt\\& = \int_0^1 F_n(t) + F_0(t) -2 \min(F_n(t), F_0(t)) - (F_n(t) - F_0(t))^2 \,dt\\&
= \int_0^1 |F_n(t) - F_0(t)| - (F_n(t) - F_0(t))^2 \,dt\\& \le
\int_0^1\, |F_n(t) - F_0(t)| \, dt\, \le\, T^{1/2}(F_n - F_0).
\end{split}
\end{equation} \end{proof}
\begin{lemma}\label{lc4}
Densities of c.d.f.'s $\mathbf{P}\,(T^2(b(t) + n^{1/2}(F_n(t) - F_0(t))) \le x)$ are uniformly bounded onto the set of all c.d.f. $F_n$ such that $nT^2(F_n -F_0) <C$.
\end{lemma}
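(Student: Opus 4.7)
My plan is to control the density of $X_n := T^2(b(t) + n^{1/2}(F_n(t) - F_0(t)))$ by passing to its characteristic function and exploiting the Karhunen-Lo\`eve expansion of the Brownian bridge $b$ on $[0,1]$. Recall that with $\phi_j(t) = \sqrt{2}\sin(\pi j t)$ and $\lambda_j = (\pi j)^{-2}$ one has $b(t) = \sum_{j=1}^\infty \lambda_j^{1/2} \xi_j \phi_j(t)$ where the $\xi_j$ are i.i.d.\ $N(0,1)$, and $\int_0^1 b^2(t)\,dt = \sum_j \lambda_j \xi_j^2$.

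Expanding the deterministic drift $g_n(t) := n^{1/2}(F_n(t) - F_0(t))$ in the same basis as $g_n = \sum_j c_{nj}\phi_j$, the hypothesis $nT^2(F_n - F_0) < C$ translates into the uniform bound $\sum_j c_{nj}^2 < C$. Since $T^2$ is just the squared $\mathbb{L}_2(dt)$-norm, orthonormality of $\{\phi_j\}$ gives the representation
\begin{equation*}
X_n = \sum_{j=1}^\infty \bigl(\lambda_j^{1/2}\xi_j + c_{nj}\bigr)^2 = \sum_{j=1}^\infty \lambda_j\bigl(\xi_j + d_{nj}\bigr)^2, \qquad d_{nj} := c_{nj}/\lambda_j^{1/2},
\end{equation*}
i.e.\ $X_n$ is a weighted sum of independent non-central $\chi^2_1$ variables with known shifts depending on $F_n$.

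Using the standard formula for the characteristic function of a non-central chi-squared, the characteristic function $\psi_n$ of $X_n$ factorizes, and a direct calculation of the modulus gives
\begin{equation*}
|\psi_n(t)| = \prod_{j=1}^\infty \frac{1}{(1 + 4t^2\lambda_j^2)^{1/4}}\,\exp\!\left(-\frac{2t^2\lambda_j^2\, d_{nj}^2}{1 + 4t^2\lambda_j^2}\right) \le \prod_{j=1}^\infty \frac{1}{(1 + 4t^2\lambda_j^2)^{1/4}} =: M(t).
\end{equation*}
The key point is that $M(t)$ is independent of $F_n$. Taking logarithms, $\log M(t) = -\tfrac14 \sum_j \log(1 + 4t^2/(\pi^4 j^4))$, and comparison with the integral $\int_0^\infty \log(1 + 4t^2/(\pi^4 x^4))\,dx$ shows that $\log M(t) \asymp -c\sqrt{|t|}$ for $|t|$ large, so $M \in \mathbb{L}_1(\mathbb{R})$.

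The conclusion then follows from Fourier inversion: since $\psi_n \in \mathbb{L}_1$ uniformly in $n$ (and in $F_n$), the density $p_{X_n}$ of $X_n$ exists and satisfies $\sup_x p_{X_n}(x) \le \tfrac{1}{2\pi}\int_{-\infty}^\infty |\psi_n(t)|\,dt \le \tfrac{1}{2\pi}\int_{-\infty}^\infty M(t)\,dt < \infty$, which is the desired uniform bound. The main technical point to verify carefully is the integrability of $M$, i.e.\ the rate $\log M(t) \asymp -c\sqrt{|t|}$; this is a routine comparison of a series with an integral, so no serious obstacle is expected.
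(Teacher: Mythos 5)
Your proof is correct, and it settles the key step by a different device than the paper. Both arguments start from the same spectral representation: the paper expands $b$ in the sine system and writes $T^2(b+\sqrt n(F_n-F_0))=\sum_j(\xi_j+n^{1/2}\theta_{nj})^2/(\pi^2j^2)$ (its formula (\ref{lm4})), which is exactly your $\sum_j\lambda_j(\xi_j+d_{nj})^2$; your version is in fact marginally more general, since you expand $F_n-F_0$ itself in $\mathbb{L}_2$ and never need $F_n$ to have a density. The divergence is in how uniform boundedness of the density is extracted. The paper isolates the first two coordinates, argues that $(\xi_1+a)^2+\frac14(\xi_2+b)^2$ has a density bounded uniformly over $|a|,|b|\le C$ (using the explicit noncentral $\chi^2_1$ densities), and then uses that convolving with the law of the remaining independent terms cannot increase the sup-norm; note the hypothesis $nT^2(F_n-F_0)<C$ is what guarantees $n^{1/2}|\theta_{n1}|,n^{1/2}|\theta_{n2}|\le \pi\sqrt C$ there. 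You instead factor the characteristic function of the full sum, observe that the drift only contributes a factor of modulus $\le 1$, and dominate $|\psi_n|$ by the fixed envelope $M(t)=\prod_j(1+4t^2\lambda_j^2)^{-1/4}$, so Fourier inversion gives $\sup_x p_{X_n}(x)\le\frac{1}{2\pi}\|M\|_{\mathbb{L}_1}$; the integrability of $M$ is indeed routine (e.g.\ the $\lfloor\sqrt{|t|}\rfloor$ factors with $j\le\sqrt{|t|}$ already give $M(t)\le\exp\{-c\sqrt{|t|}\}$, and even three fixed factors give $M(t)=O(|t|^{-3/2})$). What your route buys is that uniformity is automatic — the bound does not depend on $d_{nj}$ at all, so the restriction $nT^2(F_n-F_0)<C$ is not even needed — and no explicit noncentral densities are handled; what the paper's route buys is elementarity (no inversion theorem, only a two-dimensional density computation and the convolution monotonicity of the sup-norm). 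Either argument proves the lemma.
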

\begin{proof} Brownian bridge $b(t)$  admits representation
\begin{equation*}
b(t) = \sum_{j=1}^\infty \frac{\xi_j}{\pi j} \psi_j(t)
\end{equation*}
where $\psi_j(t) = \sqrt{2}\,\sin(\pi j t)$ and $\xi_j$, $1 \le j < \infty$, are i.i.d. Gaussian random variables, $\mathbf{E}\, \xi_j = 0$ and $ \mathbf{E}\, \xi_j^2 = 1$.

Therefore, if $f_n(t) = \sum_{j=1}^\infty \theta_{nj} \phi_j$, then
\begin{equation}\label{lm4}
T^2(b(t) + n^{1/2}(F_n(t) - F_0(t))) = \sum_{j=1}^\infty \frac{(\xi_j + n^{1/2} \theta_{nj})^2}{\pi^2 j^2}.
\end{equation}
The right hand-side of (\ref{lm4}) is a sum of independent random variables. Thus it suffices to show that
\begin{equation*}
(\xi_1 + n^{1/2} \theta_{n1})^2 + \frac{1}{4}(\xi_2 + n^{1/2} \theta_{n2})^2
\end{equation*}
has bounded densities uniformly onto $n^{1/2}|\theta_{n1}| \le C$ and
 $n^{1/2}|\theta_{n2}| \le C$ for any $C$.

 Densities $(\xi_1 + a)^2$ and $(\xi_2 + b)^2$ with $|a| \le C$ and
 $|b| \le C$ have wellknown analytical form and proof of boundedness of density of $(\xi_1 + a)^2 + \frac{1}{4}(\xi_2 + b)^2$ is obtained by routine technique. We omit these standard estimates. \end{proof}

 \begin{proof}[ Proof of {\sl ii.} in Theorem \ref{tcm}] Hungary construction allows to reduce reasoning  to proof of corresponding statement for Brownian bridge $b(t)$, $t \in [0,1)]$. This reasoning is provided in supplement. Thus it suffices to prove the following Lemma.
 \begin{lemma}\label{lum1} There holds
 \begin{equation}\label{lhm1}
\ \limsup_{n \to \infty} \sup_{F \in \Psi_n(a)} \mathbf{P}\,(T^2(b(t) + \sqrt{n}\,(F(t) - F_0(t))) \le x_\alpha) < 1 - \alpha,
\end{equation}
where $\Psi_n(a)= \{\, F\,:\, nT^2(F- F_0) > a,\, nT^2(F- F_0) < C ,\, F \mbox { is c.d.f.} \}$, $ C > a$. Here $x_\alpha$ is assigned by equation $\mathbf{P} (T^2(b(t)) > x_\alpha) = \alpha$.
\end{lemma}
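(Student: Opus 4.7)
The plan is to argue by contradiction and exploit weak compactness in $\ell^2$. For any c.d.f.~$F$, write $g = F - F_0 \in \mathbb{L}_2(0,1)$ (vanishing at $0$ and $1$) and expand in the orthonormal sine basis $\psi_j(t) = \sqrt{2}\sin(\pi j t)$ as $g = \sum_j \hat g_j \psi_j$, so that $T^2(F-F_0) = \sum_j \hat g_j^2$. Combined with the Karhunen--Lo\`eve expansion $b(t) = \sum_j \xi_j \psi_j(t)/(\pi j)$ used in Lemma \ref{lc4}, this yields the representation
\begin{equation*}
T^2\bigl(b + \sqrt{n}(F - F_0)\bigr) = \sum_{j=1}^\infty (Z_j + v_j)^2 =: S(v),
\end{equation*}
where $Z_j = \xi_j/(\pi j)$ are independent $N(0,1/(\pi j)^2)$ and $v_j = \sqrt{n}\,\hat g_j$. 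Membership $F \in \Psi_n(a)$ translates exactly into $a < \|v\|^2 < C$.

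Suppose the conclusion fails: there exist $v^{(n)}$ with $a \le \|v^{(n)}\|^2 \le C$ and $\mathbf{P}(S(v^{(n)}) \le x_\alpha) \to 1 - \alpha$. Since the sequence is bounded in $\ell^2$, I pass to a subsequence (still denoted $v^{(n)}$) with $v^{(n)} \rightharpoonup v^*$ weakly in $\ell^2$ and $\|v^{(n)}\|^2 \to b \in [a, C]$; by weak lower semicontinuity $\|v^*\|^2 \le b$. The noncentral $\chi^2_1$ factorization gives
\begin{equation*}
\mathbf{E}\,[e^{itS(v)}] = \mathbf{E}\,[e^{itS(0)}]\,\exp\Bigl(\sum_{j=1}^\infty \frac{it\, v_j^2}{1 - 2it/(\pi j)^2}\Bigr).
\end{equation*}
Splitting the inner sum into head ($j \le M$) and tail ($j > M$): pointwise convergence $v^{(n)}_j \to v^*_j$ delivers the head limit, while $\bigl|(1 - 2it/(\pi j)^2)^{-1} - 1\bigr| = O(j^{-2})$ together with $\sum_{j > M}(v^{(n)}_j)^2 \to b - \sum_{j \le M}(v^*_j)^2$ gives the tail contribution $it\bigl(b - \sum_{j \le M}(v^*_j)^2\bigr) + O(t^2/M^2)$ uniformly in $n$. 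Letting $M \to \infty$ yields $\mathbf{E}[e^{itS(v^{(n)})}] \to \mathbf{E}[e^{itS(v^*)}]\,e^{it(b - \|v^*\|^2)}$, so $S(v^{(n)})$ converges in distribution to $S(v^*) + (b - \|v^*\|^2)$.

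By continuity of the distribution of $S(v^*)$, the limit probability equals $\mathbf{P}\bigl(S(v^*) \le x_\alpha - (b - \|v^*\|^2)\bigr)$. Coordinatewise, $(Z_j + v^*_j)^2$ stochastically dominates $Z_j^2$, with strict dominance when $v^*_j \ne 0$ (verified from the explicit Gaussian expression for the c.d.f.); summing independently yields $S(v^*) \ge_{\mathrm{st}} S(0)$, strict whenever $v^* \ne 0$. Moreover $S(0)$ has a strictly positive density on $(0, \infty)$. Therefore: if $v^* \ne 0$, strict dominance plus the nonnegative shift $b - \|v^*\|^2 \ge 0$ forces the limit below $1 - \alpha$; if $v^* = 0$, the positive shift $b \ge a > 0$ together with strict monotonicity of $F_{S(0)}$ at $x_\alpha$ gives $\mathbf{P}(S(0) \le x_\alpha - b) < 1 - \alpha$. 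Either case contradicts the assumption, proving the lemma. The main obstacle is the dichotomy just exploited: when the $v^{(n)}$ lose mass to high frequencies so that $v^* = 0$, the individual noncentralities vanish, yet the total norm reappears as a deterministic shift because the weights $1/(\pi j)^2$ annihilate the variance of the cross term---the characteristic function computation is what cleanly captures this effect.
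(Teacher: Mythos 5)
Your proposal is correct, and while its skeleton (argue by contradiction, pass to a subsequence along which the noncentrality vectors $v^{(n)}=\sqrt{n}\,\hat g^{(n)}$ converge coordinatewise) coincides with the paper's, the technical core is genuinely different. The paper, after extracting the coordinatewise limits $\eta_j$ of $n^{1/2}\theta_{nj}j^{-1}$, splits into two cases according to whether the high-frequency mass $n\sum_{j>C_k}\theta_{nj}^2j^{-2}$ tends to zero or stays bounded below: case {\it i.} is settled by Lemma \ref{lum2}, whose proof integrates out one coordinate and invokes Anderson's theorem, and case {\it ii.} by an $o_P(1)$ decomposition of the tail plus Anderson's theorem again, the escaping mass acting as a shift. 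Your characteristic-function computation replaces this dichotomy by the single limit $S(v^{(n)})\Rightarrow S(v^*)+(b-\|v^*\|^2)$, which identifies the limiting law exactly and makes the ``escaping mass becomes a deterministic shift'' phenomenon explicit; afterwards you only need the one-dimensional Anderson-type inequality (strict stochastic dominance of $(Z_j+v_j^*)^2$ over $Z_j^2$, summed by independence) and strict monotonicity of the null c.d.f., rather than the infinite-dimensional Anderson theorem. A small additional advantage is that you expand $F-F_0$ itself in the sine basis, so no density of $F$ is required, matching the generality of the lemma's statement. Two points deserve one extra line each, though neither is a real gap: strictness of the dominance for the infinite sum needs $\mathbf{P}\bigl(\sum_{j\neq j_0}(Z_j+v^*_j)^2<x\bigr)>0$ for every $x>0$ (true, since the essential infimum of that sum is $0$), and the contradiction hypothesis is more safely phrased as a subsequence along which the probabilities have limit at least $1-\alpha$ (that the limit equals $1-\alpha$ already uses the bound $\mathbf{P}(S(v)\le x_\alpha)\le 1-\alpha$, which your dominance step in effect supplies).
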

\begin{proof} Suppose opposite that (\ref{lhm1}) does not valid. Then there is subsequence c.d.f.'s $F_{n_i} \in \Psi_{n_i}(a)$, $n_i \to \infty$ as $i \to \infty$ such that
\begin{equation} \label{lhm5}
\lim_{i \to \infty}\mathbf{P}\,(T^2(b(t) + \sqrt{n_i}\,(F_{n_i}(t) - F_0(t))) \le x_\alpha) \ge  1 - \alpha,
\end{equation}
where $dF_{n_i}(x)/dx = 1 + \sum_{j=1}^\infty \theta_{n_ij} \phi_j(x)$, $x \in (0,1)$, and $F_{n_i}(0) = 0$.

There are $\etab = \{ \eta_j \}_{j=1}^\infty$ and subsequence $n_{i_k}$ of sequence $n_i$ such that
$n^{1/2} \theta_{n_{i_k}j} \, j^{-1} \to \eta_j$ as $k \to \infty$ for each $j$, $1 \le j < \infty$.

Therefore there is $C_k$, $C_k \to \infty$ as $k \to \infty$, such that
\begin{equation} \label{lhm6}
\lim_{k \to \infty}\frac{n_{i_k}\, \sum_{j < C_k} \theta_{n_{i_k}j}^2 j^{-2}}{ \sum_{j < C_k} \eta_j^2 } = 1
\end{equation}
and
\begin{equation} \label{lhm206}
\lim_{k \to \infty}\, \sum_{j < C_k} (n_{i_k}^{1/2}\theta_{n_{i_k}j} j^{-1} -\eta_j)^2 = 0
\end{equation}

We consider two cases.
\vskip 0.15cm
{\it i.} There holds
\begin{equation*}
\lim_{k \to \infty} n_{i_k} \, \sum_{j > C_k} \theta_{n_{i_k}j}^2 j^{-2} = 0.
\end{equation*}
\vskip 0.15cm
{\it ii.} There holds
\begin{equation*}
 n_{i_k} \, \sum_{j > C_k} \theta_{n_{i_k}j}^2 j^{-2} > c \quad \mbox{for} \quad k > k_0.
 \end{equation*}
 If {\it i.} holds, we have
 \begin{equation}\label{uxx1}
 n_{i_k}\,\mathbf{E}\,\Bigl(\sum_{j > C_k} \xi_j\,\theta_{n_{i_k}j}\,j^{-2} \Bigr)^2 = n_{i_k} \sum_{j > C_k} \theta_{n_{i_k}j}^2\,j^{-4} \le C_k^{-2} n_{i_k} \sum_{j > C_k} \theta_{n_{i_k}j}^2\,j^{-2} = o(1).
 \end{equation}
 By (\ref{lhm206}), we get
 \begin{equation}\label{uxx2}
 \mathbf{E} \,\left(\sum_{j < C_k} \xi_j\,(n_{i_k}^{1/2}\theta_{n_{i_k}j} j^{-1} -\eta_j)\right)^2 = \sum_{j < C_k} (n_{i_k}^{1/2}\theta_{n_{i_k}j} j^{-1} -\eta_j)^2 = o(1).
 \end{equation}
By (\ref{uxx1}) and (\ref{uxx2}), we get
 \begin{equation*}
 \begin{split}&
\mathbf{P}\Bigl(\pi^{-2}\,\sum_{j=1}^\infty (\xi_j + n_{i_k}^{1/2} \theta_{n_{i_k}j})^2 \, j^{-2} < x_\alpha\,\Bigr)\\& = \mathbf{P}\Bigl(\pi^{-2}\,\sum_{j < C_k} (\xi_j + n_{i_k}^{1/2} \theta_{n_{i_k}j})^2 \, j^{-2} + \pi^{-2}\,\sum_{j > C_k} \xi_j^2\, j^{-2} < x_\alpha\,(1 + o_P(1))\,\Bigr)\\& = \mathbf{P}\Bigl(\pi^{-2}\,\sum_{j < C_k}(\xi_j\,j^{-1} + \eta_j)^2  + \pi^{-2}\,\sum_{j > C_k} \xi_j^2\, j^{-2} < x_\alpha\,(1 + o_P(1))\,\Bigr)\\& <
\,\mathbf{P}\Bigl(\pi^{-2}\,\sum_{j=1}^\infty \xi_j^2\, j^{-2} < x_\alpha\,\Bigr)\,(1 + o(1)).
\end{split}
 \end{equation*}
where the last inequality follows from Lemma \ref{lum2} given below.
\begin{lemma}\label{lum2} Let $\eta = \{\eta_j\}_1^\infty$ be such that $\pi^{-2}\,\sum_{j=1}^\infty \eta_j^2  > c$. Then there holds
 \begin{equation}\label{lhm2}
 \mathbf{P}\,\Bigl(\pi^{-2}\,\sum_{j=1}^\infty \xi_j^2\, j^{-2} < x_\alpha\Bigr) > \mathbf{P}\,\Bigl(\pi^{-2}\,\sum_{j=1}^\infty (\xi_j/j + \eta_j)^2  < x_\alpha\Bigr).
 \end{equation}
 \end{lemma}
 \begin{proof} For simplicity of notation the reasoning will be provided for $\eta_1 \ne 0$. Implementing Anderson Theorem \cite{an}, we get
\begin{equation}\label{lhm4}
\begin{split}&
\mathbf{P}\,\Bigl(\pi^{-2}\,\sum_{j=1}^\infty (\xi_j/j + \eta_j)^2 < x_\alpha \Bigr)\\& =
(2\pi)^{-1/2} \int_{-\pi\sqrt{x_\alpha}- \eta_1}^{\pi\sqrt{x_\alpha}- \eta_1} \exp \Bigl\{-\frac{x^2}{2}\Bigr\} \mathbf{P}\Bigl(\pi^{-2}\,\sum_{j=2}^\infty (\xi_j/j + \eta_j)^2 < x_\alpha - \pi^{-2}\,(x + \eta_1)^2\Bigr)\, d\,x\\& \le
(2\pi)^{-1/2} \int_{-\pi\sqrt{x_\alpha}- \eta_1}^{\pi\sqrt{x_\alpha}- \eta_1} \exp\Bigl\{-\frac{x^2}{2}\Bigr\} \mathbf{P}\Bigl(\pi^{-2}\,\sum_{j=2}^\infty \xi_j^2 \, j^{-2} < x_\alpha - \pi^{-2}\,(x + \eta_1)^2\Bigr)\, d\,x\\& =
\mathbf{P}\,\Bigl(\, \pi^{-2}\,(\xi_1 + \eta_1)^2 + \pi^{-2}\,\sum_{j=2}^\infty \xi_j^2 \,j^{-2} < x_\alpha\, \Bigr) <  \mathbf{P}\,\Bigl(\pi^{-2}\,\sum_{j=1}^\infty \xi_j^2\, j^{-2} < x_\alpha\Bigr).
\end{split}
\end{equation}
For the proof of last inequality in (\ref{lhm4}) it suffices to note that $\mathbf{P}( \xi_1^2 < x) > \mathbf{P}( (\xi_1 + \eta_1)^2 < x)$ for $x \in (0, x_\alpha)$, and, for any $\delta$, $0 < \delta < x_\alpha$, there is $\delta_1 > 0$ such that the function $\mathbf{P}( \xi_1^2 < x) - \mathbf{P}( (\xi_1 + \eta_1)^2 < x)- \delta_1$ is positive onto interval $(\delta, x_\alpha)$.\end{proof}

Suppose {\it ii.} holds. Then we have
\begin{equation}\label{lum5}
\begin{split}&
T^2(b(t) + \sqrt{n}(F_n(t) - F_0(t))) = \sum_{j=1}^\infty \frac{(\xi_j + n^{1/2} \theta_{nj})^2}{\pi^2 j^2}\\& = \sum_{j< C_n}\frac{(\xi_j + n^{1/2} \theta_{nj})^2}{\pi^2 j^2} +
\sum_{j \ge C_n}\frac{(\xi_j + n^{1/2} \theta_{nj})^2}{\pi^2 j^2} = J_{1n} + J_{2n}.
\end{split}
\end{equation}
We have
\begin{equation}\label{lum6}
\begin{split}&
J_{2n}  = \sum_{j \ge C_n }\,\frac{\xi_j^2}{\pi^2 j^2} + 2\sqrt{n}\sum_{j \ge C_n } \, \frac{\xi_j\theta_{nj}}{\pi^2 j^2}\\& + n\sum_{j \ge C_n}\,\frac{\theta_{nj}^2}{\pi^2 j^2} = J_{21n} + 2 J_{22n}  +J_{23n}
\end{split}
\end{equation}
We have
\begin{equation}\label{lum7}
J_{21n} = o_P(1) \quad \mbox{and}\quad J_{22n} \le J_{21n}^{1/2}\, J_{23n}^{1/2} = o_P(1).
\end{equation}
By (\ref{lum5}) - (\ref{lum7}), implementing Anderson Theorem \cite{an}, we get that, for any $\delta> 0$, there holds
\begin{equation}\label{lum9}
\begin{split}&
\mathbf{P}\Bigl(\sum_{j=1}^\infty \frac{(\xi_j + n^{1/2} \theta_{nj})^2}{\pi^2 j^2} < x\Bigr)
\le \mathbf{P}\Bigl(\sum_{j< C_n}  \frac{(\xi_j + n^{1/2} \theta_{nj})^2}{\pi^2 j^2}  \le x - c - o_P(1)\Bigr)\\&
\le \mathbf{P}\Bigl(\sum_{j< C_n} \frac{\xi_j^2}{\pi^2 j^2}  \le x - c + \delta\Bigr)(1+ o(1))
\le \mathbf{P}\Bigl(\sum_{j=1}^\infty \frac{\xi_j^2}{\pi^2 j^2}  \le x - c + 2\delta\Bigr)(1+ o(1)).
\end{split}
\end{equation}
\end{proof} \end{proof}

\begin{proof}[ Proof of version of Theorem  \ref{tq3}]
Let (\ref{con2}) hold. Then we have
\begin{equation*}
n\sum_{j=1}^\infty\frac{\theta_{nj}^2}{\pi^2 j^2} \ge n\sum_{j< c_2 k_n}\frac{\theta_{nj}^2}{\pi^2 j^2} \ge c_2^{-2} n k_n^{-2}\sum_{j< c_2 k_n}\theta_{nj}^2 \asymp 1.
\end{equation*}
By (\ref{cru}), this implies sufficiency. \end{proof}

\begin{proof}[ Proof of version of Theorem  \ref{tq4}] Let (\ref{con3}) hold. Then we have
\begin{equation}\label{om202}
\begin{split}&
n\sum_{j=1}^\infty\frac{\theta_{nj}^2}{\pi^2 j^2} = n\sum_{j<c_2k_n}\frac{\theta_{nj}^2}{\pi^2 j^2} + n\sum_{j>c_2k_n}\frac{\theta_{nj}^2}{\pi^2 j^2}\\& \le  o(1) + (c_2 k_n)^{-2}
n \sum_{j > c_2 k_n} \theta_{nj}^2 \asymp o(1) +  (c_2k_n)^{-2} n^{1-2r} = O(c_2^{-2}).
\end{split}
\end{equation}
Since $c_2$  is arbitrary,  then, by (\ref{cru}), (\ref{om202}) implies sufficiency. \end{proof}

\begin{proof}[ Proof of iii. in Theorem \ref{tom1}] The reasoning are akin to  proof of {\sl iii.} in Theorem  \ref{tq1}. The statement follows  from (\ref{con2}) and Lemma \ref{lom25} provided below.

\begin{lemma} \label{lom25} Let $f_n \in c_1 U$  and $cn^{-r}\le \|f_n\| \le Cn^{-r}
$. Then, for $k_n = C_1 n^{(1-2r)/2}(1  +o(1))$  with $C_1^{2s} > 2c_1/c$, there holds
\begin{equation*}
\sum_{j=1}^{k_n} \theta_{nj}^2 > \frac{c}{2} n^{-2r}.
\end{equation*}
\end{lemma}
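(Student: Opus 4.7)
The plan is to mirror the argument of Lemma \ref{ld1} verbatim, only adjusting the exponent bookkeeping to reflect that in the Cramer--von Mises setup the Besov smoothness is $s = \frac{2r}{1-2r}$ (since $r = \frac{s}{2+2s}$). The definition of the set $U$ in the class $\mathbb{\bar B}^s_{2\infty}$ gives a tail inequality for the Fourier coefficients, and combining it with the lower bound $\|f_n\|^2 \ge c^2 n^{-2r}$ on the full $\mathbb{L}_2$-norm immediately yields a lower bound on the head $\sum_{j \le k_n}\theta_{nj}^2$.

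First, I would use the Besov-ball definition: since $f_n \in c_1 U$, we have
\begin{equation*}
\lambda^{2s} \sum_{j > \lambda} \theta_{nj}^2 \le c_1 \qquad \text{for all } \lambda > 0.
\end{equation*}
Specializing $\lambda = k_n$ gives $k_n^{2s}\sum_{j \ge k_n}\theta_{nj}^2 \le c_1 (1+o(1))$.

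Next, I would evaluate the scaling of $k_n^{2s}$. With $s = \frac{2r}{1-2r}$ one has $s(1-2r) = 2r$, hence
\begin{equation*}
k_n^{2s} = C_1^{2s}\, n^{s(1-2r)}(1+o(1)) = C_1^{2s}\, n^{2r}\,(1+o(1)).
\end{equation*}
Substituting back, $\sum_{j \ge k_n}\theta_{nj}^2 \le c_1 C_1^{-2s} n^{-2r}(1+o(1))$. The hypothesis $C_1^{2s} > 2c_1/c$ then makes this tail strictly smaller than $\tfrac{c}{2}n^{-2r}$.

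Finally, subtracting the tail from $\|f_n\|^2 \ge c n^{-2r}$ yields
\begin{equation*}
\sum_{j=1}^{k_n} \theta_{nj}^2 = \|f_n\|^2 - \sum_{j > k_n}\theta_{nj}^2 \ge c\,n^{-2r} - \frac{c}{2}n^{-2r} = \frac{c}{2}n^{-2r}.
\end{equation*}
There is essentially no obstacle: the argument is a routine adaptation of Lemma \ref{ld1}, the only bookkeeping point being to verify the identity $k_n^{2s} \asymp n^{2r}$ under the Cramer--von Mises relation $r = s/(2+2s)$, which is immediate.
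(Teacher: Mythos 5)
Your proposal is correct and is essentially the paper's own argument: the paper omits the proof, stating only that it is akin to Lemma \ref{ld1}, and you reproduce exactly that tail-bound argument with the Cramer--von Mises exponent bookkeeping $k_n^{2s} = C_1^{2s} n^{s(1-2r)} = C_1^{2s} n^{2r}(1+o(1))$, which is the only thing that changes. The minor looseness in the constants (using $\|f_n\|^2 \ge c\,n^{-2r}$ rather than $c^2 n^{-2r}$ in the final subtraction) is present in the paper's Lemma \ref{ld1} as well, so it matches the intended generic-constant treatment.
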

 Proof of Lemma \ref{lom25} is akin to proof of Lemma \ref{ld1} and is omitted. \end{proof}

\begin{proof}[ Proof of iv. in Theorem \ref{tom1}] Reasoning is akin to  proof of {\sl iv.} in Theorem  \ref{tq1}. Suppose  opposite. Then there are $f = \sum_{j=1}^\infty \tau_{j}\,\phi_j  \notin \mathbb{B}^s_{2\infty}$,  and a sequence $m_l, m_l \to \infty$ as $l \to \infty$, such that (\ref{u5}) holds. Define sequences $\etab_l$, $n_l$ and $\tilde f_l$ by the same way as in the proof of Theorem \ref{tq1}.
Then
\begin{equation*}
n_l \asymp C_l^{-1/(2r)} m_l^{s/r} = C_l^{-1/(2r)} m_l^{\frac{2}{1 - 2r}}.
\end{equation*}
Therefore we get
\begin{equation*}
m_l \asymp C_l^{(1-2r)/(4r)}n_l^{\frac{1-2r}{2}}.
\end{equation*}
Hence we get
\begin{equation}\label{omu12}
n_l \sum_{j=1}^{\infty} \frac{\eta_{lj}^2}{j^2} \le n_l m_l^{-2} \sum_{j=m_l}^{\infty} \eta_{lj}^2 \asymp n_l^{1-2r}m_l^{-2} \asymp C_l^{\frac{2r-1}{2r}}=   o(1) .
\end{equation}
By Theorem \ref{tcm},  (\ref{omu12}) implies  inconsistency of   sequence of alternatives $\tilde f_l$. \end{proof}

\begin{proof}[ Proof of Theorem \ref{tom5}] By Lemma \ref{lc1}, it suffices to prove that, for any $\varepsilon$, there is $n_0(\varepsilon)$ such that, for $n > n_0(\varepsilon)$, the following inequality holds
\begin{equation}\label{pl7}
\begin{split}&
|\mathbf{P}( T^2(b(F_n(t)+F_{1n}(t) - F_0(t)) +  \sqrt{n}(F_n(t) + F_{1n}(t) - 2 F_0(t))) > x_\alpha)\\& - \mathbf{P}( T^2(b(F_{n}(t)) +  \sqrt{n}(F_{n}(t) - F_0(t))) > x_\alpha)| < \varepsilon.
\end{split}
\end{equation}
Since $T$ is a norm, by Lemma \ref{lc4},  proof of (\ref{pl7}) is reduced to  proof that, for any $\delta_1 > 0$, there hold
\begin{equation}\label{pl9}
\mathbf{P}(|T(b(F_n(t)+F_{1n}(t) - F_0(t))) - T(b(F_{n}(t)))| > \delta_1) = o(1),
\end{equation}
and there is $\delta_n \to 0$ as  $n \to \infty$ such that there holds
\begin{equation}\label{pl11}
n^{1/2} |T(F_n(t)+F_{1n}(t) -2 F_0(t)) - T(F_{n}(t)- F_0(t))|  <\delta_n.
\end{equation}
Note that
\begin{equation}\label{pl13}
\begin{split}&
|T(b(F_n(t)+F_{1n}(t) - F_0(t))) - T(b(F_{n}(t)))|\\& \le T(b(F_n(t))+F_{1n}(t) - F_0(t)) - b(F_{n}(t)))
\end{split}
\end{equation}
and
\begin{equation}\label{pl14}
 |T(F_n(t)+F_{1n}(t) -2 F_0(t)) - T(F_{n}(t)- F_0(t))| \le T (F_{1n}(t)- F_0(t)).
\end{equation}
By Lemma \ref{lc2}, we have
\begin{equation}\label{pl15}
\mathbf{E}\, T^2(b(F_n(t)+F_{1n}(t) - F_0(t)) - b(F_{n}(t))) \le T^{1/4}(F_{1n} - F_0) =o(1).
\end{equation}
By (\ref{pl13}) and (\ref{pl15}), we get (\ref{pl9}).

Since sequence $f_{1n}$ is inconsistent, we have
\begin{equation}\label{pl16}
 nT^2(F_{1n}(t)- F_0(t)) = o(1)
 \end{equation}
as $n \to \infty$. By (\ref{pl14}) and (\ref{pl16}),  we get (\ref{pl11}). \end{proof}

\begin{proof}[ Proof of Theorem \ref{tom6}] We can write $f_n = f_{1n} + f_{2n}$ where $f_{1n} = \sum_{j < c k_n} \theta_{nj}  \, \phi_j$ and  $f_{2n} = \sum_{j  \ge c k_n} \theta_{nj}  \, \phi_j$. Denote $F_{1n}$ and $F_{2n}$ c.d.f.'s having densities $1 + f_{1n}$ and $1 + f_{2n}$ respectively. Then, using the inequality
\begin{equation}\label{pll7}
|T(F_n - F_0) - T(F_{1n} - F_0)| < T(F_n - F_{1n})
\end{equation}
and the same estimates as in proof of Theorem \ref{tom5} we get Theorem \ref{tom6}. We omit detailed reasoning. \end{proof}

Theorem \ref{tcm}, G1  and  B reduce  proof of Theorem \ref{tom4}  to the analysis of sums $\sum_{ck_n < j < Ck_n} \theta_{nj}^2$ with $C > c$. Such an analysis  has been provided in details in subsection \ref{subsec9.4} with another parameters $r$ and $s$. We omit  proof of Theorem \ref{tom4}.

\end{document}